\theoremstyle{definition}
\newtheorem{theorem}{Theorem}
\newtheorem*{theorem*}{Statement}
\newtheorem{lemma}[theorem]{Lemma}
\newtheorem{corollary}[theorem]{Corollary}
\newtheorem{proposition}[theorem]{Proposition}
\newtheorem{remark}{Remark}
\newtheorem*{condition*}{Condition}
\newtheorem{assumption}{Assumption}
\newtheorem{hypothesis}{Hypothesis}
\DeclareMathOperator{\var}{\mathbb Var}
\DeclareMathOperator{\cov}{\mathbb Cov}
\DeclareMathOperator{\LCP}{LCP}
\DeclareMathOperator{\PL}{PL}
\newcommand{\rownorm}{|\!|\!|}
\newcommand{\1}{\mathbbm 1}
\newcommand{\T}{\top}
\newcommand{\PP}{{{\mathbb P}}} 
\newcommand{\EE}{{{\mathbb E}}} 
\newcommand{\NN}{{{\mathbb N}}} 
\newcommand{\RR}{{{\mathbb R}}}
\newcommand{\mcF}{{\mathscr F}} 
\newcommand{\mcL}{{\mathscr L}}
\newcommand{\cA}{{\mathcal A}} 
\newcommand{\cB}{{\mathcal B}} 
\newcommand{\cD}{{\mathcal D}} 
\newcommand{\cE}{{\mathcal E}} 
\newcommand{\cG}{{\mathcal G}} 
\newcommand{\cI}{{\mathcal I}} 
\newcommand{\cK}{{\mathcal K}} 
\newcommand{\cL}{{\mathcal L}} 
\newcommand{\cN}{{\mathcal N}} 
\newcommand{\cP}{{\mathcal P}} 
\newcommand{\cR}{{\mathcal R}} 
\newcommand{\cQ}{{\mathcal Q}} 
\newcommand{\cS}{{\mathcal S}} 
\newcommand{\cT}{{\mathcal T}} 
\newcommand{\cV}{{\mathcal V}}
\newcommand{\sH}{{\mathsf H}} 
\newcommand{\tR}{{\widetilde R}} 
\newcommand{\tW}{{\widetilde W}} 
\newcommand{\tGamma}{{\widetilde \Gamma}} 
\newcommand{\bSigma}{{\bs\Sigma}} 
\newcommand{\tXi}{{\widetilde \Xi}} 
\newcommand{\chR}{{\widecheck R}} 
\newcommand{\chW}{{\widecheck W}}
\newcommand{\br}{\bs r} 
\newcommand{\bu}{\bs u}
\newcommand{\bx}{\bs x} 
\newcommand{\bcx}{\bs x} 
\newcommand{\cx}{\check x} 
\newcommand{\tx}{\tilde x} 
\newcommand{\bX}{\bs X} 
\newcommand{\tX}{{\widetilde X}} 
\newcommand{\chX}{{\widecheck X}} 
\newcommand{\tbX}{\bs{\widetilde X}} 
\newcommand{\cbX}{\bs X} 
\newcommand{\by}{\bs y} 
\newcommand{\bcy}{\bs{\check y}} 
\newcommand{\bZ}{\bs Z} 
\newcommand{\tZ}{{\widetilde Z}} 
\newcommand{\cZ}{{\widecheck Z}} 
\newcommand{\cbZ}{\bs{\widecheck Z}} 
\newcommand{\tbZ}{\bs{\widetilde Z}}
\newcommand{\barW}{{\overline W}} 
\newcommand{\ucU}{\overline{\mathcal U}}
\newcommand{\ps}[1]{\langle #1 \rangle}
\newcommand{\balpha}{\bs \alpha}
\newcommand{\ua}{\underline{a}} 
\newcommand{\ub}{\underline{b}} 
\newcommand{\ud}{\underline{d}} 
\newcommand{\uq}{\underline{q}} 
\newcommand{\ux}{x} 
\newcommand{\ueta}{\eta} 
\newcommand{\uxi}{\underline{\xi}}
\newcommand{\bG}{\bs G}
\newcommand{\bS}{\bs S} 
\newcommand{\bT}{\bs T} 
\newcommand{\bU}{\bs U} 
\newcommand{\bV}{\bs V} 
\newcommand{\bW}{\bs W}
\newcommand{\bm}{{\bs m}} 
\newcommand{\bz}{\bs z} 
\newcommand{\tbz}{\tilde{\bs z}} 
\newcommand{\tmax}{{t_{\max}}} 
\newcommand{\Cmom}{C_{\text{mom}}}  
\newcommand{\lessprobEw}{\stackrel{\mathcal{P}}{\leq}_{\cE_W}} 
\newcommand{\lessprob}{\stackrel{\mathcal{P}}{\leq}} 
\newcommand{\bs}{\boldsymbol}
\DeclareMathOperator*{\diag}{diag}
\newcommand{\eqlaw}{\stackrel{\cL}{=}}  
\newcommand{\toprobashortshort}{\stackrel{\mathcal{P}}{\to}}
\newcommand{\toprobashort}{\,\stackrel{\mathcal{P}}{\to}\,}
\newcommand{\toprobalong}{\xrightarrow[n\to\infty]{\mathcal P}}
\newcommand{\tolong}{\xrightarrow[n\to\infty]{}}
\begin{document}

\title{Approximate Message Passing for sparse matrices \\ with application 
to the equilibria of large ecological Lotka-Volterra systems} 

\author{Walid Hachem} 
\affil{CNRS, Laboratoire d'informatique Gaspard Monge (LIGM / UMR 8049),  
Universit\'e Gustave Eiffel, ESIEE Paris, France}  

\date{June 11, 2024 \ (revised version)} 

\maketitle

\begin{abstract} 
This paper is divided into two parts.  The first part is devoted to the study
of a class of Approximate Message Passing (AMP) algorithms which are widely
used in the fields of statistical physics, machine learning, or communication
theory. The AMP algorithms studied in this part are those where the measurement
matrix has independent elements, up to the symmetry constraint when this matrix
is symmetric, with a variance profile that can be sparse. The AMP problem is
solved by adapting the approach of Bayati, Lelarge, and Montanari (2015) to
this matrix model. \\ 
The Lotka-Volterra (LV) model is the standard model for studying the dynamical
behavior of large dimensional ecological food chains.  The second part of this
paper is focused on the study of the statistical distribution of the globally
stable equilibrium vector of a LV system in the situation where the random
symmetric interaction matrix among the living species is sparse, and in the
regime of large dimensions. This equilibirium vector is the solution of a
Linear Complementarity Problem, which distribution is shown to be characterized
through the AMP approach developed in the first part. In the large dimensional
regime, this distribution is close to a mixture of a large number of truncated
Gaussians. 
\end{abstract}

{\bf Keywords:} Approximate Message Passing, 
Equilibria of ecological systems, 
Lotka-Volterra Ordinary Differential Equations, 
Sparse random matrices. 

\section{Introduction}

An ecosystem can be seen as a multi-dimensional dynamical system that
represents the time evolution of the abundances of the interacting species. The
behavior of such systems is governed by the intrinsic population dynamics and
by the strengths of the interactions among these species. 
Given a system model, it is of interest to evaluate the distribution of the
species at the equilibrium when this equilibrium exists and is unique. The
present paper is motivated by this general problem. 

An archetypal model for an ecological dynamical system is provided by the so
called Lotka-Volterra (LV) multi-dimensional Ordinary Differential Equation
(ODE). The dynamics of a LV ODE with $n$ species take the form 
\[
\dot u(t) = u(t) \odot 
  \left( r + \left(\Sigma - I_n \right) u(t) \right), 
    \quad t \geq 0 ,   
\]
where the vector function $u : \RR_+ \to \RR_+^n$ represents the abundances of
the $n$ coexisting species after a proper normalization, $\odot$ is the
element-wise product, $r$ is the so-called vector of intrinsic growth rates of
the species, and $\Sigma$ is a $n\times n$ matrix which $(i,j)^{\text{th}}$
element reflects the interaction effect of Species $j$ on the growth of
Species~$i$ \cite{tak-livre96}. 

Denoting as $\| \cdot \|$ the spectral norm, it known that under a condition
such as $\| (\Sigma + \Sigma^\T)/2 \| < 1$, the ODE solution is well-defined,
and this ODE has an unique globally stable equilibrium $u_\star = \bigl[
u_{\star,i} \bigr]_{i=1}^n$ in the classical sense of the Lyapounov theory
\cite{tak-livre96,li-etal-09}. It is of interest to study the distribution of
the elements of this vector, which quantifies the relative abundances of the
species at the equilibrium.  It is useful to note here that $u_\star$
frequently lies at the boundary of the first quadrant of $\RR^n$, and
therefore, it is of particular interest to evaluate the proportion of surviving
species at these equilibria. 

Usually, the interaction matrix $\Sigma$ is difficult to measure or to
evaluate, and all the more so as the ecosystem's dimension $n$ gets large.  To
circumvent this difficulty, a whole line of research in theoretical ecology
considers that the matrix $\Sigma$ is a random matrix, and focuses on the
dynamics of the LV system in the regime where $n \to\infty$
\cite{all-tan-15,akj-etal-(arxiv)22}.  The idea is to predict some essential
aspects of the dynamical behavior of the ecosystem on the basis of a few
``phenomenological'' statistical features of the interaction matrix, rather
than on its fine structure.  The application of large random matrix tools to
the dynamical behavior study of ecological systems dates back to the work of
May~\cite{may-72}.  Among the most widely studied statistical models for the
interaction matrix from the standpoint of the large random matrix theory are
the Gaussian Orthogonal Ensemble (GOE) model, the Gaussian model with
i.i.d.~elements (sometimes called the Ginibre model), or the so-called
elliptical model, which can be seen as an ``interpolation'' these two with
possibly a non-zero mean \cite{all-tan-12}. 

Given a statistical model for $\Sigma$, and assuming the existence of $u_\star$
which is now a random vector, the problem amounts in our context to evaluating
the asymptotic behavior of the random probability measure 
\[
\mu^{u_\star} = \frac 1n \sum_{i\in[n]} \delta_{u_{\star,i}} 
\]
as $n\to\infty$.  In the literature, this has been mostly done with tools
issued from the physics. In~\cite{bun-17}, Bunin obtained the asymptotic
distribution of the equilibrium for the elliptical model by using 
the so-called dynamical cavity method. A similar result was obtained by
Galla in \cite{gal-18} with the help of generating functionals techniques. 
Older results in the same vein can be found in, \emph{e.g.},
\cite{opp-die-92,tok-04}. Heuristic evaluations of the asymptotic behavior of
$\mu^{u_\star}$ were proposed in \cite{cle-fer-naj-22,cle-naj-mas-(arxiv)22},
most generally in the elliptical non-centered case. 

In this paper, we consider a symmetric model for $\Sigma$, which can be
used to represent the competitive and the mutualistic interactions
\cite{bir-bun-cam-18,akj-etal-(arxiv)22}. In this framework, we assume that the
coefficients of $\Sigma$ are not necessarily Gaussian, are independent up to
the symmetry constraint, and are subjected to a variance profile that can be
sparse.  The two main features of our model are thus the inhomogeneity of the
interaction strengths between the species, and their sparsity. Regarding this
last assumption, it is indeed commonly observed that a species interacts with a
very small proportion of the other species coexisting within the ecosystem
\cite{bus-etal-17}. 

Our approach is mathematically rigorous,  contrary to the
references we just mentioned. To obtain our results, we generalize the
technique of our recent preprint \cite{akj-hac-mai-naj-(arxiv)23} devoted to
the GOE case.  The idea goes as follows: The vector $u_\star$ can be identified
as the solution of a Linear Complementarity Problem (LCP), a class of problems
studied in the field of linear programming (see
\cite{tak-livre96,cot-pan-sto-livre09}).  In \cite{akj-hac-mai-naj-(arxiv)23},
it is shown that in the GOE case, the asymptotic behavior of $\mu^{x_\star}$,
where $u_\star$ is now the LCP solution, can be evaluated with the help of an
Approximate Message Passing (AMP) technique. Such techniques have recently
aroused an intense and growing research effort in the fields of statistical
physics, communication theory, or statistical Machine Learning
\cite{fen-etal-(now)22}.  In a word, given a function 
$h: \RR \times \RR \times \NN \to \RR$ and a random symmetric $n\times n$ 
so-called measurement matrix $W$, a standard AMP algorithm is an iterative 
algorithm of the form 
\[
\hat x^{t+1} = W h(\hat x^t, \eta, t) \ + \ \text{a ``correction'' term} , 
\]
where $\eta = [\eta_i]_{i=1}^n \in \RR^n$ is a parameter vector, and where 
$h(\hat x^t, \eta, t) = \bigl[ h(\hat x^t_i, \eta_i, t) \bigr]_{i=1}^n$. 
By properly designing the correction term, one is able to control the joint
distribution of the $(t+1)$--uple $(\eta, \hat x^1, \ldots, \hat x^t)$ for
each $t \geq 0$ and for $n\to\infty$, as will developed more precisely below.
We shall be able to make use of such a result to evaluate the large--$n$
distribution of our equilibrium vector, and show that this distribution is
close to a mixture of Gaussians. 

Thus, from the standpoint of theoretical ecology, the present paper is a
generalization of \cite{akj-hac-mai-naj-(arxiv)23} to the case where $\Sigma$
is non-necessarily Gaussian and is subjected to a variance profile that can be
sparse.  To that end, a version of the AMP algorithm well-suited to these kind
of matrices is developed below, and might have its own interest due to its
potential applications in other fields than in ecology.  

Let us provide a quick review of the AMP literature in order to better position
our contribution in this respect.  Many of the original ideas lying behing the
AMP algorithms come from the fields of statistical physics and communication
theory.  The first rigorous AMP results in a framework close to this paper were
developed by Bolthausen~\cite{bol-14} and Bayati and
Montanari~\cite{bay-mon-11} both for GOE matrices and for rectangular matrices
with i.i.d.~Gaussian elements.  Since then, the AMP approach has been
generalized in many directions.  Let us cite the Generalized AMP of
\cite{ran-11}, the contributions \cite{jav-mon-13}, \cite{beh-ree-pmlr22} and
\cite{gui-ko-krz-zde-(arxiv)22,pak-ko-krz-(arxiv)23} where block variance
profiles are considered, \cite{fan-22} devoted to rotationally invariant
matrices, or the graph-based approach of \cite{ger-ber-arxiv22}.  Universality
results in terms of the distribution of the elements of $W$ were proposed in
the recent papers \cite{che-lam-21}, \cite{dud-lu-sen-(arxiv)22}, and
\cite{wan-zho-fan-(arxiv)22}. 
The closest to our paper among these is 
\cite{wan-zho-fan-(arxiv)22}, which considers among others the case where
$W$ is a centered symmetric
matrix with independent elements satisfying $\EE W_{ij}^2 \lesssim 1/n$ and
$\lim_n \max_i | (\sum_j \EE W_{ij}^2) - 1 | = 0$, and shows that 
$W$ can be replaced with a GOE matrix as regards the AMP problem. 
This constraint on the variance profile is alleviated in our context, leading
to a more involved expression of the asymptotic joint distribution of the
algorithm iterates. 

In \cite{bay-lel-mon-15}, Bayati, Lelarge, and Montanari were among the first
to establish a universal AMP result.  Starting with polynomial activation
functions and using a combinatorial approach, these authors propose a moment
computation of the elements of the vector iterates, where these elements are
expressed as sums of monomials in the matrix entries which are indexed by
labelled trees. As regards the AMP part, the present paper is essentially an
adaptation of the approach of \cite{bay-lel-mon-15} to the case of a sparse
variance profile.  

Section~\ref{def-amp} is devoted to our general AMP results for symmetric and
non-symmetric measurement matrices, independently of the ecological
application.  Our LV problem is then stated in Section~\ref{sec-lv}, along with
the result on the large--$n$ distribution of the equilibrium. The proofs for
Sections~\ref{def-amp} and~\ref{sec-lv} are provided in Sections~\ref{prf-amp}
and~\ref{prf-lvsym} respectively. 

\section{Sparse AMP with a variance profile: problem statement and results} 
\label{def-amp}

We start with our assumptions.  Let $(n)$ be a sequence of integers in the set
$\{ 2,3, \ldots \}$ that converges to infinity.  For each $n$, let
$\{X^{(n)}_{ij}\}_{1\leq i < j \leq n}$ be a set of real random variables such
that:  
\begin{assumption} The following facts hold true. 
\label{ass-X} 
\begin{itemize}
\item The $n(n-1)/2$ random variables $X^{(n)}_{ij}$ for $1\leq i < j \leq n$ 
 are independent. 
\item $\EE X^{(n)}_{i,j} = 0$ and $\EE (X^{(n)}_{ij})^2 = 1$. 
\item For each integer $k > 2$, there exists a constant $\Cmom(k) > 0$ such 
that 
\[
 \sup_n \max_{1\leq i < j \leq n} 
 \Bigl( \EE \Bigl| X^{(n)}_{ij} \Bigr|^k \Bigr)^{1/k} \leq \Cmom(k) . 
\]
\end{itemize}
\end{assumption} 
Let us write $X^{(n)}_{ji} = X^{(n)}_{ij}$ for $1\leq i < j \leq n$, and 
$X^{(n)}_{ii} = 0$ for $i\in [n]$, and let us consider the $n\times n$ random 
symmetric matrix $X^{(n)} = \bigl[ X^{(n)}_{ij} \bigr]_{i,j=1}^n$. 

For each $n$, let $\{s^{(n)}_{ij}\}_{1\leq i < j \leq n}$ be a set of 
deterministic non-negative numbers. Write $s^{(n)}_{ji} = s^{(n)}_{ij}$ for 
$1\leq i < j \leq n$, and $s^{(n)}_{ii} = 0$ for $i\in [n]$, and consider the 
$n\times n$ symmetric matrix with non-negative elements 
$S^{(n)} = \bigl[ s^{(n)}_{ij} \bigr]_{i,j=1}^n$. Define the random symmetric
matrix $W^{(n)}$ as 
\begin{equation}
\label{def-W} 
W^{(n)} = \begin{bmatrix} W^{(n)}_{ij} \end{bmatrix}_{i,j=1}^n 
 = \bigl( S^{(n)} \bigr)^{\odot {1/2}} \odot X^{(n)}, 
\end{equation} 
where $\odot$ is the Hadamard product, and where $A^{\odot{1/2}}$ is the 
element-wise square root of the matrix $A$. 

Letting $(K_n)$ be a sequence of positive integers indexed by $n$ such that 
$K_n \leq n$, the variance profile matrix $S^{(n)}$ of $W^{(n)}$ complies with 
the following assumption: 
\begin{assumption} The following facts hold true. 
\label{ass-A} 
\begin{itemize}
\item $K_n \to\infty$. 
\item There exists a constant $C_{\text{card}} > 0$ such that 
\[
\forall n, \ \forall i \in [n], \ 
 \left| \left\{ j \in [n] \, : \, s_{ij}^{(n)} > 0 \right\} \right|
    \leq C_{\text{card}} K_n , 
\]
where $|\cdot|$ is the cardinality of a set. 

\item There exists a constant $ C_S > 0$ such that 
 $s_{ij}^{(n)} \leq C_S K_n^{-1}$ for all $n$ and all $i,j \in [n]$. 
\end{itemize}
\end{assumption} 
Beyond these constraints, we do not put any structural assumption on the
variance profile matrix $S^{(n)}$.  In this paper, we shall be mostly
interested in the situations where $K_n / n \to_n 0$, making the 
matrices $W^{(n)}$ sparse. 

For each $n$, let $x^{(n),0} = \left[ x^{(n),0}_1, \ldots, 
 x^{(n),0}_n \right]^\T\in \RR^n$ be a deterministic vector that will 
represent the initial value of our AMP sequence, and let 
$\eta^{(n)} = \left[ \eta^{(n)}_1, \ldots, \eta^{(n)}_n \right]^\T\in \RR^n$ 
be a deterministic parameter vector. These vectors obey the following 
assumptions: 
\begin{assumption}
\label{x0} 
There exists a compact set $\cQ_x \subset \RR$ such that 
$x^{(n),0}_i \in \cQ_x$ for all $n$ and all $i\in [n]$. 
\end{assumption} 
\begin{assumption}
\label{eta} 
There exists a compact set $\cQ_\eta \subset \RR$ such that 
$\eta^{(n)}_i \in \cQ_\eta$ for all $n$ and all $i\in [n]$. 
\end{assumption} 

Consider a measurable function $h : \RR \times \cQ_\eta \times \NN \to \RR$,
called activation function, that satisfies one of the two following 
assumptions: 
\begin{assumption}[Lipschitz activation function in the AMP parameter] 
\label{ass:h-lip}
For each $t \in \NN$, there exists a constant $C > 0$ and a continuous 
non-decreasing function $\kappa : \RR_+ \to \RR_+$ with $\kappa(0) = 0$, such
that:
\[
\forall x, x' \in \RR, \quad 
 | h(x,\eta,t) - h(x',\eta,t) | \leq C | x - x' |, 
\]
and 
\[
 \forall \eta, \eta' \in \cQ_\eta, \quad 
 | h(x,\eta,t) - h(x,\eta',t) | \leq \kappa(|\eta-\eta'|) 
         ( 1 + |x|). 
\]
\end{assumption}  

\begin{assumption}[AC activation function with polynomial 
growth in the AMP parameter]  
\label{ass:h-gal}
The following properties hold true: 
\begin{itemize}
\item For each $\ueta\in\cQ_\eta$ and each $t\in\NN$, the function 
$h(\cdot, \ueta, t)$ is absolutely continuous. 

\item For each $t\in\NN$, there exists two constants $m,C > 0$ and a continuous
non-decreasing function $\kappa : \RR_+ \to \RR_+$ with $\kappa(0) = 0$, such
that:
\[
 | h(\ux,\ueta,t) | \leq C ( 1 + |\ux|^m), 
\]
and 
\[
 \forall \ueta, \ueta' \in \cQ_\eta, \quad 
 | h(\ux,\ueta,t) - h(\ux,\ueta',t) | \leq \kappa(|\ueta-\ueta'|) 
         ( 1 + |\ux|^m). 
\]

\end{itemize}
\end{assumption}

At the heart of the AMP algorithm that we shall study is a Gaussian 
$\RR^n$--valued sequence of random vectors that we denote for each $n$ as 
$(\bZ^{(n),t})_{t\in\NN_*}$, where $\NN_* = \NN\setminus\{ 0 \}$. The 
probability distribution of this sequence is defined recursively in $t$ as 
follows. 
Let $\bZ^{(n),t} = \left[ Z^{(n),t}_1, \ldots, Z^{(n),t}_n \right]^\T$. 
Writing $Z^{(n)}_i = (Z^{(n),1}_i, Z^{(n),2}_i, \ldots)$, the
sequences $\{Z^{(n)}_i\}_{i=1}^n$ are centered, Gaussian, and independent.
Denote as $R^{(n),t}_i$ the covariance matrix of the vector 
$\vec Z^{(n),t}_i = [ Z^{(n),1}_i, \ldots, Z^{(n),t}_i]^\T$. These matrices
are constructed recursively in the parameter $t$ as follows. 
Defining the variances 
$\{\Xi^{(n),0}_i \}_{i=1}^n$ as
$\Xi^{(n),0}_i = h(x^{(n),0}_i, \eta^{(n)}_i, 0)^2$, we set 
\[
R^{(n),1}_i = \sum_{l\in[n]} s^{(n)}_{il} \Xi^{(n),0}_l. 
\]
Given the matrices $\{ R^{(n),t}_i \}_{i=1}^n$, the covariance matrices 
\begin{equation}
\label{eq-Sigma}
\Xi^{(n),t}_i = \EE \begin{bmatrix} 
  h(x^{(n),0}_i, \eta_i^{(n)}, 0) \\
  h(Z^{(n),1}_i, \eta_i^{(n)}, 1) \\
  \vdots \\ 
  h(Z^{(n),t}_i, \eta_i^{(n)}, t) \end{bmatrix} 
 \begin{bmatrix} 
  h(x^{(n),0}_i, \eta_i^{(n)}, 0) & 
  h(Z^{(n),1}_i, \eta_i^{(n)}, 1) & 
  \cdots & h(Z^{(n),t}_i, \eta_i^{(n)}, t) \end{bmatrix} , 
\end{equation} 
are well-defined for all $i\in [n]$, whether Assumption~\ref{ass:h-lip} or
Assumption~\ref{ass:h-gal} is used. With this at hand, we set 
\begin{equation}
\label{eq-R} 
R^{(n),t+1}_i = \sum_{l\in[n]} s^{(n)}_{il} \Xi^{(n),t}_l . 
\end{equation} 
It is clear that $R^{(n),t}_i$ is the principal matrix of $R^{(n),t+1}_i$ 
consisting in its first $t$ rows and columns. 

The distribution of the sequence $(\bZ^{(n),t})_{t}$ is determined by the
initial value $x^{(n),0}$, the parameter vector $\eta^{(n)}$, the variance
profile matrix $S^{(n)}$, and the activation function $h$. We shall say from
now on that this distribution is determined by the
$(S^{(n)},h,\eta^{(n)},x^{(n),0})$-state evolution equations, which provide the
covariance matrices $R^{(n),t}_i$. The spectral norms $\| R^{(n),t}_i \|$ are 
bounded as specified by the following lemma: 
\begin{lemma} 
\label{lm:bndR} 
Let Assumptions~\ref{ass-A}--\ref{eta}, and either Assumption~\ref{ass:h-lip}
or Assumption~\ref{ass:h-gal} hold true. Then for each integer $t > 0$, it 
holds that 
\begin{equation}
\label{bnd-R} 
\sup_{n} \max_{i\in[n]} \| R^{(n),t}_i \| < \infty . 
\end{equation} 
\end{lemma}
\begin{proof}[Sketch of proof]
By recurrence on $t$. For $t=1$, the result is a consequence of 
Assumption~\ref{ass-A} and the easily shown continuity of 
$h(\cdot,\cdot,0)$ on the compact $\cQ_x \times \cQ_\eta$. 
Assume the result is true for $t$. Then, using Assumption~\ref{eta} and
either Assumption~\ref{ass:h-lip} or Assumption~\ref{ass:h-gal} again, 
standard Gaussian derivations show that
$\sup_n\max_i \EE h(Z^{(n),t}_i, \eta_i, t)^2  < \infty$. From this, 
we obtain by using the Cauchy-Schwarz inequality that 
$\sup_n\max_i \| \Xi^{(n),t}_i \|  < \infty$. By Assumption~\ref{ass-A}, we
then obtain that $\sup_n\max_i \| R^{(n),t+1}_i \|  < \infty$. 
\end{proof} 

Our next assumption will be used to ensure that the variances of the
random variables $Z^{(n),t}_i$ are bounded away from zero: 
\begin{assumption}
\label{nondeg} 
There exists a constant $c_S > 0$ such that 
\[
\inf_n \min_{i\in[n]} \sum_{j\in[n]} s^{(n)}_{ij} \geq c_S . 
\]
Recalling Assumption~\ref{ass-A}, let 
\[
\alpha_S = \frac{c_S}{2 C_S - c_S / C_{\text{card}}}. 
\]
There is a sequence of positive constants $(c_{\text{h}}(0), c_{\text{h}}(1),
\ldots)$ such that for each set $\cS^{(n)} \subset [n]$ with 
$| \cS^{(n)} | = \lfloor \alpha_S K_n \rfloor$, it holds that 
\[
\frac{1}{K_n} \sum_{l \in \cS^{(n)}} 
 h(x^{(n),0}_l,\eta_l^{(n)}, 0)^2 \geq c_{\text{h}}(0) , 
\]
and for each $t \geq 1$, 
\[
\frac{1}{K_n} \sum_{l\in\cS^{(n)}} 
 \EE h(\uxi,\eta_l^{(n)}, t)^2 \geq c_{\text{h}}(t) , 
\] 
where $\uxi \sim \cN(0,1)$.
\end{assumption} 
Non-degeneracy conditions similar to this assumption are usually not
constraining and are invoked in the vast majority of the contributions dealing
with AMP algorithms, see, \emph{e.g.}, \cite{jav-mon-13,fen-etal-(now)22}.  If
needed, it is certainly possible to lighten this assumption by properly
perturbing the matrix $S^{(n)}$ or the function $h$ in the course of the proof 
of Theorems~\ref{th-amp-lip} and~\ref{th-amp-gal} below. A perturbation 
argument of this sort was considered in, \emph{e.g.}, \cite{ber-mon-ngu-19}.  

In all this paper, we follow the following notational convention. Given a
function $f : \RR^k \to \RR$ and a $k$--uple of $\RR^n$--valued vectors
$(a^\ell = [ a^\ell_i ]_{i=1}^n)_{\ell\in [k]}$, we denote as 
$f(a^1,\ldots, a^k)$ the $\RR^n$--valued vector 
$[ f(a^1_i,\ldots, a^k_i) ]_{i=1}^n$.  Similarly, if $f$ is a 
$\RR^k \times \NN \to \RR$ function, $f(a^1,\ldots, a^k,t)$ is the
$\RR^n$--valued vector $[ f(a^1_i,\ldots, a^k_i,t) ]_{i=1}^n$.  
 
With either Assumption~\ref{ass:h-lip} or Assumption~\ref{ass:h-gal} at hand,
we denote as $\partial h(\ux,\ueta,t)$ a measurable function that coincides
almost everywhere on $\RR$ with the partial derivative $\partial h(\cdot,
\ueta,t)$ at $\ux$. Note that under Assumption~\ref{ass:h-lip}, the existence
of this function follows from Rademacher's theorem, and it is furthermore 
obvious that $\EE \partial h(\bZ^{(n),t}, \eta^{(n)}, t)$ exists for each 
$t \in \NN_*$. 
Under Assumption~\ref{ass:h-gal}, Stein's lemma says that 
$(\EE\uxi^2) \EE \partial h(\uxi,\ueta,t) = \EE\uxi h(\uxi,\ueta,t)$ for each 
Gaussian centered random variable $\uxi$, see, \emph{e.g.}, 
\cite[Th.~2.1]{fou-str-wel-(livre)18}. From
this identity, we can deduce that under Assumption~\ref{ass:h-gal}, 
$\EE \partial h(\bZ^{(n),t}, \eta^{(n)}, t)$ also exists for each 
$t \in \NN_*$. 

We are now ready to state the results of this section.  The AMP algorithm built
around the matrix $W^{(n)}$ and the activation function $h$ that we shall study
here takes the following form: Starting with the vector $x^{(n),0}$, the vector
$x^{(n),t+1} = [ x^{(n),t+1}_i ]_{i=1}^n \in \RR^n$ delivered by this algorithm
at Iteration~$t+1$ is given as 
\begin{equation}
\label{alg-amp} 
x^{(n),t+1} = 
 W^{(n)} h(x^{(n),t},\eta^{(n)},t)  - 
 \diag \Bigl(S^{(n)} \EE \partial h(\bZ^{(n),t}, \eta^{(n)}, t)\Bigr) 
   h(x^{(n),t-1},\eta^{(n)},t-1)  , 
\end{equation} 
with the term $\diag(\cdots) h(\cdots)$ being equal to zero for $t=0$. 
Variants for this algorithm could be considered by replacing the diagonal 
matrix 
\[
\diag (S^{(n)} \EE\partial h(\bZ^{(n),t}, \eta^{(n)}, t) )
\]
above with  $\diag (S^{(n)} \partial h(x^{(n),t}, \eta^{(n)}, t) )$, or
with $\diag ( (W^{(n)})^{\odot 2} \partial h(x^{(n),t}, \eta^{(n)}, t) )$
where $A^{\odot 2}$ is the square of the matrix $A$ in the Hadamard product, 
as is done in the literature in similar contexts. Algorithm~\eqref{alg-amp} 
(or its variants) generalizes the algorithms studied in, \emph{e.g.}, 
\cite{jav-mon-13} or \cite{bay-lel-mon-15}.  

Given an integer $k > 0$, a function $\varphi : \RR^k \to \RR$ is said
pseudo-Lipschitz of degree~$2$ (notation: $\varphi\in\PL_2(\RR^k)$) if there
exists a constant $L > 0$ such that 
\[
\forall u,v \in \RR^k, \quad 
|\varphi(u) - \varphi(v)| \leq L \| u - v \| ( 1 + \| u \| + \| v \| ), 
\]
where $\| u \|$ is the Euclidean norm of the vector $u$. 
Our first result is devoted to the case where $h(\cdot,\eta,t)$ is 
Lipschitz: 
\begin{theorem}
\label{th-amp-lip} 
Let Assumptions~\ref{ass-X}--\ref{ass:h-lip} and~\ref{nondeg} hold true.  Let
$C_W > 0$ be an arbitrary constant. Fix an arbitrary integer $\tmax > 0$.
For each positive integer $n$, let $(\beta_1^{(n)}, \ldots, \beta_n^{(n)})$ a 
$n$--uple of real numbers such that 
$\sup_n \max_{i\in[n]} | \beta^{(n)}_i | < \infty$.  
Let $\varphi : \RR^{\tmax+1} \to \RR$ be a function in $\PL_2(\RR^{\tmax+1})$.
Then, for each $\varepsilon > 0$, it holds that 
\begin{multline} 
\label{cvg-W2} 
\PP\left[\left[ \left| \frac 1n \sum_{i\in[n]} 
 \beta^{(n)}_i \varphi( \eta^{(n)}_i, x^{(n),1}_i, \ldots, x^{(n),\tmax}_i) - 
 \beta^{(n)}_i \EE\varphi(\eta^{(n)}_i, Z^{(n),1}_i, \ldots, Z^{(n),\tmax}_i) 
  \right| \geq \varepsilon \right] \right. \\
 \left. \phantom{\sum_{i\in[n]}} \cap \left[ \| W^{(n)} \| \leq C_W \right] 
 \right] \xrightarrow[n\to\infty]{} 0 . 
\end{multline} 
\end{theorem} 

Before interpreting this theorem, we need to bound the operator norm 
$\| W^{(n)} \|$. With the help of the results of Bandeira and van Handel in 
\cite{ban-vha-16}, this can be done in the framework of the following 
assumption:
\begin{assumption}
\label{ass-mom}
There exists a constant $C > 0$ such that the moment bounds from 
Assumption~\ref{ass-X} satisfy 
\[
\Cmom(k) \leq C k^{\rho / 2}, 
\]
for some $\rho \geq 0$. 
\end{assumption} 
As is well known, the cases $\rho = 1$ and $\rho = 2$ correspond to the 
sub-Gaussian and the sub-exponential distributions respectively. 
\begin{proposition} 
\label{Wfini} 
Let Assumptions~\ref{ass-X}, \ref{ass-A} and~\ref{ass-mom} hold true, and 
assume furthermore that $K_n \gtrsim (\log n)^{\rho \vee 1}$. Then, there 
exists a constant $C > 0$ such that $\lim\sup_n \| W^{(n)} \| \leq C$ with 
probability one. 
\end{proposition}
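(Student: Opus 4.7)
The plan is to combine a spectral-norm bound \`a la Bandeira--van Handel \cite{ban-vha-16} with a truncation/moment-method argument and the Borel--Cantelli lemma. First I would control the two parameters that govern such a bound. Assumption~\ref{ass-A} implies that each row of $S^{(n)}$ has at most $CK_n$ nonzero entries, each of magnitude at most $C/K_n$, so
\[
\sigma_n^2 \;:=\; \max_{i\in[n]} \sum_{j\in[n]} \EE (W^{(n)}_{ij})^2 \;=\; \max_{i\in[n]} \sum_{j\in[n]} s^{(n)}_{ij} \;\leq\; C^2 ,
\]
i.e.\ the bulk scale is $O(1)$. For the entry-size parameter, Assumption~\ref{ass-mom} combined with $s^{(n)}_{ij}\leq C/K_n$ gives the moment estimate
\[
\bigl( \EE | W^{(n)}_{ij}|^{2k} \bigr)^{1/(2k)} \;\leq\; C \sqrt{1/K_n}\; k^{\rho/2} ,
\]
and a tail of the form $\PP(|W^{(n)}_{ij}| > t) \lesssim \exp\bigl(-c (\sqrt{K_n}\, t)^{2/\rho}\bigr)$ (sub-Gaussian when $\rho \leq 1$).

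The second step handles the unboundedness of the entries by truncating each $W^{(n)}_{ij}$ at a threshold $T_n \asymp (\log n)^{\rho/2}/\sqrt{K_n}$, chosen (with the help of a union bound over the $O(n^2)$ entries and the hypothesis $K_n \gtrsim (\log n)^{\rho \vee 1}$) so that $\PP(\max_{ij}|W^{(n)}_{ij}| > T_n) = O(n^{-\alpha})$ for an arbitrary $\alpha>1$. On the complementary event, $W^{(n)}$ agrees with its centered truncation $W^{(n),\leq}$, whose entries are bounded by $2T_n$ and whose variance profile differs from $S^{(n)}$ by $O(n^{-\alpha})$ entrywise. Applying the Bandeira--van Handel bound to $W^{(n),\leq}$---either via its sub-Gaussian corollary when $\rho\leq 1$ (which avoids an extra $\sqrt{\log n}$ loss), or via the moment-method/polynomial-growth form in the heavier-tailed regime $\rho>1$---yields
\[
\bigl(\EE \| W^{(n),\leq} \|^{2q}\bigr)^{1/(2q)} \;\leq\; 2\sigma_n + \text{correction}(q,T_n,K_n,n) \;\leq\; C_0
\]
uniformly for $q = \lfloor c \log n \rfloor$, where the correction is $O(1)$ precisely under $K_n \gtrsim (\log n)^{\rho \vee 1}$.

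To upgrade this to an almost sure statement, Markov's inequality gives
\[
\PP\bigl( \| W^{(n),\leq} \| > C_0 + \delta \bigr) \;\leq\; \Bigl( \tfrac{C_0}{C_0+\delta} \Bigr)^{2q} \;=\; n^{-\beta(\delta)}
\]
with $\beta(\delta)>1$ for an appropriate choice of $c$; combined with Borel--Cantelli applied to both this event and the truncation event, this forces $\limsup_n \|W^{(n)}\| \leq C_0 + \delta$ almost surely, and the proposition follows by sending $\delta \downarrow 0$.

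The main obstacle is achieving the precise exponent $\rho \vee 1$ in the hypothesis on $K_n$. A naive truncation-plus-Theorem~1.1-of-\cite{ban-vha-16} argument always loses an additional factor of $\sqrt{\log n}$ relative to the target, which is absorbable only when $\rho=0$. To eliminate this loss one must either appeal to the sub-Gaussian form of the Bandeira--van Handel bound (in the regime $\rho\leq 1$), or invoke a direct walk-counting moment bound that exploits the fact that closed walks traversing edges only twice give the leading contribution while higher-multiplicity traversals decay geometrically in $K_n$ (in the regime $\rho>1$). Stitching these two regimes together, uniformly over the possibly very inhomogeneous variance profile allowed by Assumption~\ref{ass-A}, is where the technical work concentrates.
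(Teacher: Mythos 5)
Your overall scheme --- a Bandeira--van Handel-type bound on $\|W^{(n)}\|$, then Markov and Borel--Cantelli --- is the same as the paper's, but the truncation detour you insert in Step 2 is both unnecessary and, as you yourself note in your last paragraph, incurs a $\sqrt{\log n}$ loss that would only give the result under $K_n \gtrsim (\log n)^{\rho+1}$ rather than $(\log n)^{\rho\vee 1}$. The remedy you gesture at (sub-Gaussian form of the bound for $\rho\le 1$, a walk-counting moment argument for $\rho>1$) is precisely what \cite[Cor.~3.5]{ban-vha-16} already provides, and the paper uses it directly, with no truncation at all. Concretely, the paper cites the proof of that corollary to get, with $\sigma=\max_i(\sum_j s_{ij})^{1/2}$ and $\sigma_*=\max_{ij}\sqrt{s_{ij}}$,
\[
\bigl(\EE\|W^{(n)}\|^{2\lceil\log n\rceil}\bigr)^{1/(2\lceil\log n\rceil)} \le C\bigl(\sigma + \sigma_*(\log n)^{(\rho\vee 1)/2}\bigr) \le C\Bigl(1 + \sqrt{(\log n)^{\rho\vee 1}/K_n}\Bigr),
\]
which is $O(1)$ under the hypothesis on $K_n$; Markov then gives $\PP[\|W^{(n)}\|\ge\eta]\le (C/\eta)^{2\lceil\log n\rceil}$, which is summable for $\eta$ large, and Borel--Cantelli finishes (the $\rho=0$ case is handled separately via \cite[Cor.~3.12 and Rem.~3.13]{ban-vha-16}, since then the entries are essentially bounded). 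So the key step your proposal leaves open --- getting the $(\log n)^{(\rho\vee 1)/2}$ factor rather than $(\log n)^{(\rho+1)/2}$, uniformly in the variance profile --- is exactly what the cited corollary supplies in closed form; you have correctly diagnosed the obstruction but have not actually closed it, and the truncation step you build your argument around is the wrong starting point for closing it.
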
 
Thus, the heavier the tail of the probability distributions of the
$X^{(n)}_{ij}$, the faster we need to make $K_n$ converge to infinity in order
to ensure that $\| W^{(n)} \|$ is bounded.  This proposition is proven in
Appendix~\ref{anx-Wfini}. 

We now comment Theorem~\ref{th-amp-lip}. 

It is useful to interpret the result of this theorem in terms of the
empirical distribution of the particles $\{ (\eta^{(n)}_i, x^{(n),1}_i, \ldots,
x^{(n),\tmax}_i) \}_{i=1}^n$ as an element of a Wasserstein space of probability
measures.  For an integer $k > 0$, denote as $\cP(\RR^k)$ the
space of probability measures on $\RR^k$, and by $\cP_2(\RR^k)$ the Wasserstein
space of probability measures on $\RR^k$ with finite second moment,
equipped with the Wasserstein distance $\bs d_2$ \cite{vil-livre09}. Given
a sequence of probability measures $(\nu_n)$ in $\cP_2(\RR^k)$ and a 
probability measure $\nu \in \cP_2(\RR^k)$, it is well-known that the three 
following assertions are equivalent \cite{vil-livre09,fen-etal-(now)22}: 
\begin{enumerate}[label=\roman*)]
\item  $\bs d_2(\nu_n, \nu) \to 0$. 
\item\label{testquad}
For each continuous function $\varphi : \RR^k \to \RR$ such that 
 $|\varphi(x_1, \ldots, x_k) | \leq C(1 + x_1^2 + \cdots + x_k^2)$ for
some $C > 0$, it holds that 
\begin{equation}
\label{cvg-test} 
\int \varphi d\nu_n \tolong \int \varphi d\nu . 
\end{equation} 
\item\label{testpl}
The convergence \eqref{cvg-test} holds true for each function 
 $\varphi \in \PL_2(\RR^k)$. 
\end{enumerate}
Furthermore, if the sequence $(\nu_n)$ is random, then, the
convergence $\bs d_2(\nu_n, \nu) \toprobashort 0$ is equivalent to the
convergence $\int \varphi d\nu_n \toprobashort \int \varphi d\nu$ for 
each continuous function $\varphi:\RR^k \to \RR$ either with quadratic
growth as in~\ref{testquad}, or belonging to the class $\PL_2(\RR^k)$. 

We are interested here in the asymptotic
behavior of the $\cP_2(\RR^{\tmax+1})$--valued random probability measure 
$\mu^{\eta^{(n)}, x^{(n),1}, \ldots, x^{(n),\tmax}}$, defined as
\[
\mu^{\eta^{(n)}, x^{(n),1}, \ldots, x^{(n),\tmax}} = \frac 1n \sum_{i=1}^n 
  \delta_{(\eta^{(n)}_i, x^{(n),1}_i, \ldots, x^{(n),\tmax}_i)} .
\]
In general, this random measure does not converge to any fixed probability
measure, because we did not put any assumption on the variance profile of the
elements of $W^{(n)}$ beyond Assumption~\ref{ass-A}. However, let us define the
deterministic measure $\bs\mu^{(n),\tmax} \in \cP(\RR^{\tmax+1})$ as follows: 
Letting $\theta^{(n)}$ be a discrete random variable uniformly distributed on 
the set $[n]$ and independent of the sequence $(\bZ^{(n),t})_t$, we put 
$\bs\mu^{(n),\tmax} =
\mcL((\eta^{(n)}_{\theta^{(n)}}, \vec Z^{(n),\tmax}_{\theta^{(n)}}))$. In 
particular, $\bs\mu^{(n),\tmax}(\RR \times \cdot)$ is a mixture of $n$ 
multivariate Gaussians. By Assumption~\ref{eta} and Lemma~\ref{lm:bndR}, one 
can readily show that the sequence 
$(\bs\mu^{(n),\tmax})_{n\geq 2}$ is pre-compact in the space 
$\cP_2(\RR^{\tmax+1})$. Moreover, taking $(\beta_1^{(n)}, \ldots, 
 \beta_n^{(n)}) = (1,\ldots, 1)$ in the statement of Theorem~\ref{th-amp-lip}
and assuming that $W^{(n)}$ is constructed in such a way that 
$\limsup_n \| W^{(n)} \| < \infty$ with probability one 
(see Proposition~\ref{Wfini}), Theorem~\ref{th-amp-lip} shows that 
\begin{equation}
\label{deteq} 
\forall \varphi \in \PL_2(\RR^{\tmax+1}), \quad 
\int \varphi \, d\mu^{\eta^{(n)}, x^{(n),1}, \ldots, x^{(n),\tmax}} 
  - \int \varphi \, d\bs\mu^{(n),\tmax} 
 \toprobalong 0,  
\end{equation} 
which is equivalent to 
$\bs d_2(\mu^{\eta^{(n)}, x^{(n),1}, \ldots, x^{(n),\tmax}}, 
 \bs\mu^{(n),\tmax}) \toprobashort 0$, and also equivalent to replacing the 
statement ``$\forall \varphi \in \PL_2(\RR^{\tmax+1})$'' above with 
``$\forall \varphi: \RR^{\tmax+1} \to \RR$ continuous with 
$|\varphi(u_1,\ldots,u_{\tmax+1})| \leq C (1 + u_1^2 + \cdots u_{\tmax+1}^2)$''
 \cite{fen-etal-(now)22}. 


Let us now describe a case where the convergence~\eqref{deteq} boils down to 
the convergence of the empirical measure 
$\mu^{\eta^{(n)}, x^{(n),1}, \ldots, x^{(n),\tmax}}$ to a
fixed probability measure.  Consider the situation where $K_n = n$ and where
the variances satisfy $s_{ij}^{(n)} = 1 / n$ for all $i,j \in [n]$ (without
more details, we neglect here the constraint $s_{ii}^{(n)} = 0$ that will
be used in the proof of Theorem~\ref{th-amp-lip}). In this case,
it is easy to check that the Gaussian vectors 
$\{ \vec Z^{(n),t}_i \}_{i\in[n]}$ are i.i.d., and furthermore, writing 
$\vec Z^{(n),t}_i \eqlaw \vec Z^{(n),t} = [ Z^{(n),1}, \ldots, Z^{(n),t}]^\T$ 
with the covariance matrix $R^{(n),t}$, it holds that 
\begin{align*} 
&R^{(n),t+1} = \\
&\frac 1n \sum_{i=1}^n \EE \begin{bmatrix} 
  h(x^{(n),0}_i, \eta_i^{(n)}, 0) \\
  h(Z^{(n),1}, \eta_i^{(n)}, 1) \\
  \vdots \\ 
  h(Z^{(n),t}, \eta_i^{(n)}, t) \end{bmatrix} 
 \begin{bmatrix} 
  h(x^{(n),0}, \eta_i^{(n)}, 0) & h(Z^{(n),1}, \eta_i^{(n)}, 1) & 
  \cdots & h(Z^{(n),t}, \eta_i^{(n)}, t) \end{bmatrix} .  
\end{align*} 
Assume now that the probability measure $\nu^{(n)} = n^{-1} \sum_i
\delta_{(x^{(n),0}_i, \eta_i^{(n)})}$ converges narrowly to a deterministic
probability measure $\nu^\infty$ as $n\to\infty$, as it is frequently done in
the literature, often under a randomness assumption on $(x^{(n),0},
\eta^{(n)})$ coupled with an independence assumption from $W^{(n)}$. 
Then, given a couple of real random variables
$(\overline x, \overline\eta)$ 
with the distribution $\nu^\infty$, we can readily replace $R^{(n),t+1}$ with 
the matrix $R^{\infty,t+1}$ that does not depend on $n$ and that is written as 
\[
R^{\infty,t+1} = 
\EE_{(\overline x, \overline\eta)} \EE_{Z^{\infty,t}} \begin{bmatrix} 
  h(\overline x, \overline\eta, 0) \\
  h(Z^{\infty,1}, \overline\eta, 1) \\
  \vdots \\ 
  h(Z^{\infty,t}, \overline\eta, t) \end{bmatrix} 
 \begin{bmatrix} 
  h(\overline x, \overline\eta, 0) & h(Z^{\infty,1}, \overline\eta, 1) & 
  \cdots & h(Z^{\infty,t}, \overline\eta, t) \end{bmatrix} , 
\] 
with $Z^{\infty,t} \sim \cN(0, R^{\infty,t})$ being independent of 
$(\overline x, \overline\eta)$. One consequence of this replacement is that
the measure $\bs\mu^{(n),\tmax}$ in~\eqref{deteq} can be replaced with the
distribution $\bs\mu^{\infty,\tmax} = \mcL((\overline\eta, Z^{\infty,\tmax}))
 \in \cP_2(\RR^{\tmax+1})$, and the convergence~\eqref{deteq} amounts to 
\[
\mu^{\eta^{(n)}, x^{(n),1}, \ldots, x^{(n),\tmax}}  
 \toprobalong \bs\mu^{\infty,\tmax}  \quad \text{in } \cP_2(\RR^{\tmax+1}). 
\]
We recover here a well-known result given in, \emph{e.g.}, 
\cite{bay-mon-11,fen-etal-(now)22}.  
 
In the general setting of this paper, only the convergence~\eqref{deteq} is
available. Note that in general, putting the assumptions on $x^{(n),0},
\eta^{(n)}$ mentioned above does not suffice to transform this convergence to a
convergence to a fixed deterministic measure. Once again, this is due to the
fact that we did not put any assumption on the asymptotic behavior of the
variance profile matrix $S^{(n)}$ beyond Assumption~\ref{ass-A}. 

Our aim now is to explore a generalization of Theorem~\ref{th-amp-lip} into two
directions: We replace Assumption~\ref{ass:h-lip} on the activation function
with the more general assumption~\ref{ass:h-gal}, and we replace the test
function $\varphi\in\PL_2(\RR^{\tmax+1})$ (or, equivalently, a test function
$\varphi$ satisfying the statement \ref{testquad} above) with a continuous 
function $\varphi: \cQ_\eta\times \RR^{\tmax} \to \RR$ such that 
 $|\varphi(\alpha, u_1, \ldots, u_\tmax) | 
 \leq C(1 + |u_1|^m + \cdots + |u_\tmax|^m)$ for a given arbitrarily integer 
$m > 0$. In this case, we obtain the following partial 
result that generalizes \cite{bay-lel-mon-15}.  
For a vector $u$, we write hereinafter $\| u \|_n = \| u \| / \sqrt{n}$. 
\begin{theorem}
\label{th-amp-gal} 
Let Assumptions~\ref{ass-X}--\ref{eta} and \ref{ass:h-gal}, \ref{nondeg} hold
true. Let $C_W > 0$ be an arbitrary constant.  Fix an arbitrary integer 
$\tmax > 0$. For each positive integer $n$, let $(\beta_1^{(n)}, \ldots,
\beta_n^{(n)})$ be a $n$--uple of real numbers such that 
$\sup_n \max_{i\in[n]} | \beta^{(n)}_i | < \infty$. 
Let $\varphi : \cQ_\eta \times \RR^{\tmax} \to \RR$ be a continuous function 
satisfying  
$| \varphi(\alpha,u_1,\ldots,u_{\tmax})| \leq C ( 1 + |u_1|^m + \cdots +
|u_{\tmax}|^m)$ for some $C, m > 0$. 
Then, there exists a sequence of matrices $(\tbX^{(n)})_{n\geq 2}$,  
with $\tbX^{(n)} = \left[ \tx^{(n),1}, \ldots, \tx^{(n),\tmax} \right] \in
\RR^{n\times \tmax}$, such that for every $\varepsilon > 0$,  
\[
\PP\left[ \left[ 
 \left\| \tx^{(n),t+1} - \bar x^{(n),t+1} \right\|_n > \varepsilon) \right]  
  \cap \left[ \| W^{(n)} \| \leq C_W \right] \right] \tolong 0 
\]
for each $t = 0, 1, \ldots, \tmax-1$, with 
\[
\bar x^{(n),t+1} = 
  W^{(n)} h(\tx^{(n),t},\eta^{(n)},t)  - 
 \diag \left( S^{(n)} \EE \partial h(\bZ^{(n),t},\eta^{(n)},t)\right) 
  h(\tx^{(n),t-1},\eta^{(n)},t-1) 
\]
and with $\tx^{(n), 0} = x^{(n), 0}$ and $\diag(\cdots) h(\cdots) = 0$ for 
$t = 0$, and furthermore, 
\[
\frac 1n \sum_{i\in[n]} \beta^{(n)}_i
  \varphi( \eta^{(n)}_i, \tx^{(n),1}_i, \ldots, \tx^{(n),\tmax}_i) - 
  \beta^{(n)}_i
  \EE\varphi(\eta^{(n)}_i, Z^{(n),1}_i, \ldots, Z^{(n),\tmax}_i) 
 \toprobalong 0 . 
\] 
\end{theorem} 
Observe that the sequence $(\tbX^{(n)})$ does not exactly describe an AMP
sequence, furthermore, its construction depends on the function $\varphi$ and
the $n$--uples $(\beta^{(n)}_i)_{i\in[n]}$. Despite these limitations,
Theorem~\ref{th-amp-gal} is useful in most of the practical situations where
the activation functions are not necessarily Lipschitz in the AMP parameter or 
where the test functions grow faster than quadratically at infinity. \\ 

The AMP algorithms studied in the literature for symmetric measurement 
matrices have non-symmetric analogues. For the sake of generality, we now 
succinctly deal with this case. The results of the next paragraph will not
be used elsewhere in this paper. 

\subsection*{The non-symmetric variant (sketch)} 

Let $(p(n))$ be a sequence of positive integers such that $0 < \inf p(n) / n
\leq \sup p(n)/n < \infty$.  For a given $p = p(n)$, let 
$G^{(n)} = \bigl[ G_{ij}^{(n)} \bigr]_{i,j=1}^{p,n}$ be a random rectangular 
$p\times n$ matrix, where the random variables $G_{ij}^{(n)}$ for 
$(i,j)\in[p]\times [n]$ are independent, centered, and unit-variance random 
variables that share with the elements of $X^{(n)}$ the moment bounds given by 
Assumption~\ref{ass-X}. Let 
$B^{(n)}  = \bigl[ b^{(n)}_{ij} \bigr]_{i,j=1}^{p,n} \in \RR^{p\times n}$ be a 
deterministic matrix with non-negative elements, and assume that the 
$(n+p)\times (n+p)$ symmetric matrix 
$\begin{bmatrix} 0 & B^{(n)} \\ (B^{(n)})^\T & 0 \end{bmatrix}$ 
satisfies Assumption~\ref{ass-A} in the role of $S^{(n)}$ there. 
The rectangular $p\times n$ matrix of interest here is the random matrix
$Y^{(n)}$ with independent elements and a variance profile defined as 
\[
Y^{(n)} = (B^{(n)})^{\odot {1/2}} \odot G^{(n)} .
\]
Let $\tilde x^{(n),0}  = [ \tilde x^{(n),0}_j ]_{i\in[n]} \in \RR^n$ be a 
deterministic vector which elements belong to a compact set as in 
Assumption~\ref{x0}. Let $\eta^{(n)} = [ \eta^{(n)}_i ]_{i\in[p]} \in \RR^p$ 
and $\tilde\eta^{(n)} = [ \tilde\eta^{(n)}_j ]_{j\in[n]} \in \RR^n$ be 
deterministic parameter vectors which 
elements belong to a compact set as in Assumption~\ref{eta}.  Define two 
functions $h(\ux, \ueta, t)$ and $\tilde h(\tilde\ux,\tilde\ueta,t)$ that 
both satisfy Assumption~\ref{ass:h-lip}. Assume furthermore that these 
functions satisfy non-degeneracy assumptions of the type of 
Assumption~\ref{nondeg}, and that all the row and column sums of $B^{(n)}$ are 
lower bounded by a positive bound independent of $n$.  

With these objects, we define a centered Gaussian $\RR^p$--valued process
$(\bZ^{(n),t})_{t\in\NN}$, and a centered Gaussian $\RR^n$--valued process 
$(\tbZ^{(n),t})_{t\in\NN_*}$ as follows. 
Write $\bZ^{(n),t} = \left[ Z^{(n),t}_1, \ldots, Z^{(n),t}_p \right]^\T$, and
$\tbZ^{(n),t} = \left[ \tZ^{(n),t}_1, \ldots, \tZ^{(n),t}_n \right]^\T$, 
and define the sequences 
$Z^{(n)}_i = (Z^{(n),0}_i, Z^{(n),1}_i, \ldots)$ for $i\in[p]$, and 
$\tZ^{(n)}_j = (\tZ^{(n),1}_j, \tZ^{(n),2}_j, \ldots)$ for $j\in[n]$. 
Then, the sequences $\{Z^{(n)}_i\}_{i=1}^p$ are independent, and so is the
case of the sequences $\{\tZ^{(n)}_j\}_{j=1}^n$. 
The covariance matrices 
$R^{(n),t}_i = \cov( Z^{(n),0}_i, \ldots, Z^{(n),t}_i)$ and 
$\tR^{(n),t}_j = \cov( \tZ^{(n),1}_j, \ldots, \tZ^{(n),t}_j)$ are recursively
defined in $t$ as follows. 

Writing 
$\tXi^{(n),0}_j = \tilde h(\tilde x^0_j, \tilde\eta^{(n)}_j, 0)^2$ for 
$j\in[n]$, we set 
\[
R^{(n),0}_i = \sum_{j\in[n]} b^{(n)}_{ij} \tXi^{(n),0}_j , \quad i\in[p]. 
\]
With these variances at hand, we can construct the variances  
$\Xi^{(n),0}_i = \EE h(Z^{(n),0}_i, \eta^{(n)}_i, 0)^2$ for $i\in[p]$, and 
then, we set 
\[
\tR^{(n),1}_j = \sum_{i\in[p]} b^{(n)}_{ij} \Xi^{(n),0}_i , \quad j\in[n]. 
\]
Let $t > 0$, and assume that the matrices $\tR^{(n),t}_j$ are available. Then
we are able to construct the $n$ covariance matrices 
\[
\tXi^{(n),t}_j = \EE 
 \begin{bmatrix} \tilde h(\tilde x^{(n),0}_j, \tilde\eta^{(n)}_j, 0) \\
  \tilde h(\tZ^{(n),1}_j, \tilde\eta^{(n)}_j, 1) \\
   \vdots \\
  \tilde h(\tZ^{(n),t}_j, \tilde\eta^{(n)}_j, t) \end{bmatrix} 
  \begin{bmatrix} 
   \tilde h(\tilde x^{(n),0}_j, \tilde\eta^{(n)}_j, 0) & \cdots & 
  \tilde h(\tZ^{(n),t}_j, \tilde\eta^{(n)}_j, t) 
  \end{bmatrix} ,
\]
and we can then construct the $p$ covariance matrices 
\[
R^{(n),t}_i = \sum_{j\in[n]} b^{(n)}_{ij} \tXi^{(n),t}_j . 
\]
With these matrices at hand, we are able to write 
\[
\Xi^{(n),t}_i = \EE 
 \begin{bmatrix} 
   h(Z^{(n),0}_i, \eta^{(n)}_i, 0) \\
   \vdots \\
  h(Z^{(n),t}_i, \eta^{(n)}_i, t) \end{bmatrix} 
  \begin{bmatrix} 
   h(Z^{(n),0}_i, \eta^{(n)}_i, 0) & \cdots & 
  h(Z^{(n),t}_i, \eta^{(n)}_i, t) 
  \end{bmatrix} ,
\]
and we set 
\[
\tR^{(n),t+1}_j = \sum_{i\in[p]} b^{(n)}_{ij} \Xi^{(n),t}_i , 
 \quad j\in [n]. 
\]
With these equations, the AMP non-symmetric equations take the form
\begin{align*}
x^{(n),t} &= Y^{(n)} \tilde h(\tilde x^{(n),t}, \tilde\eta^{(n)}, t) 
 - \diag\bigl( B^{(n)} 
   \EE \partial\tilde h(\tZ^{(n),t}, \tilde\eta^{(n)}, t)\bigr) 
  h(x^{(n),t-1}, \eta^{(n)}, t-1) \\
\tilde x^{(n),t+1} &= (Y^{(n)})^\T h(x^{(n),t}, \eta^{(n)}, t) 
 - \diag\bigl( (B^{(n)})^\T  
   \EE \partial h(Z^{(n),t}, \eta^{(n)}, t)\bigr) 
  \tilde h(\tilde x^{(n),t}, \tilde\eta^{(n)}, t) , 
\end{align*}  
where the vectors $\tilde h(\tilde x^t, \tilde\eta, t) \in \RR^n$ and 
$h(x^t, \eta, t) \in \RR^p$ are defined in the obvious way, and where 
$h(x^{-1},\eta,-1)$ is zero. 

Furthermore, considering an arbitrary integer $\tmax \geq 0$, two 
test functions $\varphi, \tilde\varphi \in \PL_2(\RR^{\tmax +2})$, and for
each $n$, a $p$--uple $(\beta^{(n)}_i)_{i\in[p]}$ and a $n$--uple 
$(\tilde\beta^{(n)}_j)_{j\in[n]}$ which elements are uniformly bounded as in 
the statement of Theorem~\ref{th-amp-lip}, it holds that 
\begin{multline*} 
 \PP\left[\left[ \left| \frac 1n \sum_{i\in[p]} 
 \beta^{(n)}_i \varphi( \eta^{(n)}_i, x^{(n),0}_i, \ldots, x^{(n),\tmax}_i) - 
 \beta^{(n)}_i \EE\varphi(\eta^{(n)}_i, Z^{(n),0}_i, \ldots, Z^{(n),\tmax}_i) 
 \right| \geq \varepsilon \right] \right. \\ 
 \left. \phantom{\sum_{i\in[p]}} \cap \left[ \| Y^{(n)} \| \leq C_Y \right] 
 \right] \tolong 0 , 
\end{multline*}
and 
\begin{multline*} 
 \PP\left[\left[ \left| \frac 1n \sum_{j\in[n]} 
 \tilde\beta^{(n)}_j 
 \tilde\varphi( \tilde\eta^{(n)}_j, \tilde x^{(n),1}_j, \ldots, 
               \tilde x^{(n),\tmax+1}_j) - 
 \tilde\beta^{(n)}_j \EE\tilde\varphi(\tilde\eta^{(n)}_j, \tZ^{(n),1}_j, 
    \ldots, \tZ^{(n),\tmax+1}_j) 
 \right| \geq \varepsilon \right] \right. \\ 
 \left. \phantom{\sum_{i\in[p]}} \cap \left[ \| Y^{(n)} \| \leq C_Y \right] 
 \right] \tolong 0 
\end{multline*}
for each $\varepsilon > 0$ and each $C_Y > 0$. 

These expressions can be deduced from the symmetric case described by
Theorem~\ref{th-amp-lip} by replacing the matrix $W$ there with the $(n+p)
\times (n+p)$ symmetric matrix $\begin{bmatrix} 0 & Y^{(n)} \\ (Y^{(n)})^\T & 0
\end{bmatrix}$, and by choosing the activation function adequately. More
details on this passage from the symmetric to the non-symmetric case are
provided in \cite{jav-mon-13}. 

An analogue of Theorem~\ref{th-amp-gal} can also be devised for the 
non-symmetric case. We omit the related details.

\section{Application to the equilibria of LV systems} 
\label{sec-lv} 

We now apply the previous results to the study of the distribution of the
globally stable equilibria of large Lotka-Volterra ecological systems. 

Let $\RR_{*+} = (0,\infty)$. 
Keeping the sequence $(n)$ of integers introduced in Section~\ref{def-amp}, let
$(r^{(n)})$ be a sequence of vectors such that $r^{(n)} \in \RR_{*+}^n$, and
let $(\Sigma^{(n)})$ be a sequence of random symmetric matrices such that 
$\Sigma^{(n)} \in \RR^{n\times n}$.  
For a given $n$, starting with a vector $u^{(n)}(0) \in \RR_{*+}^n$, our 
LV ODE is given as 
\begin{equation}
\label{lv} 
\dot u^{(n)}(t) = u^{(n)}(t) \odot 
  \left( r^{(n)} + \left(\Sigma^{(n)} - I_n \right) u^{(n)}(t) \right), 
    \quad t \geq 0 .  
\end{equation}
We recall that $r^{(n)}$ is the so-called vector of intrinsic growth
rates of the species, and the matrix $\Sigma^{(n)}$ is the species interaction
matrix. 
When this ODE has a unique solution $u^{(n)}: \RR_+ \to \RR_+^n$ for each
$u^{(n)}(0) \in \RR_{*+}^n$, and when the image of this function is
pre-compact, we say that the ODE is well-defined.  A sufficient condition for
well-definiteness is $\| \Sigma^{(n)} \| < 1$ \cite{li-etal-09}.  It is also
well-known that under this condition, the ODE~\eqref{lv} has a globally stable
equilibrium $u^{(n)}_\star\in\RR_+^n$ in the classical sense of the Lyapounov
theory~\cite[Chap.~3]{tak-livre96}. We shall focus herein on the distribution
$\mu^{u^{(n)}_\star} \in \cP(\RR_+)$ of the elements of $u^{(n)}_\star$ for the
large values of $n$, which reflects the relative abundances of the species at
the equilibrium. 

We now state our assumptions regarding the vectors $r^{(n)}$ and the
matrices $\Sigma^{(n)}$.  Let us call ``hypotheses'' the assumptions relative
to our LV system. Our first hypothesis reads as follows: 

\begin{hypothesis}
\label{lv-sigma}
Let $X^{(n)}$ be the random symmetric $n\times n$ matrix constructed after
Assumption~\ref{ass-X} above.
Let $V^{(n)}  = \left[ v^{(n)}_{ij} \right]_{i,j=1}^n$ be a deterministic
symmetric matrix with the same structure as $S^{(n)}$ in Section~\ref{def-amp},
and that complies with Assumption~\ref{ass-A} above, with the $s^{(n)}_{ij}$ in
this assumption being replaced with $v^{(n)}_{ij}$. Furthermore, assume that
there is a constant $c_V > 0$ such that
\[
\min_{i\in[n]} \sum_{j\in[n]} v^{(n)}_{ij} \geq c_V
\]
(see Assumption~\ref{nondeg}). Then, the matrices $\Sigma^{(n)}$ are written as 
\[
\Sigma^{(n)} = (V^{(n)})^{\odot {1/2}} \odot X^{(n)} . 
\]
\end{hypothesis} 
According to our model, Species $i$ interacts only with the species in the 
set $\cV_i^{(n)} = \{ j \in [n], \ v^{(n)}_{ij} > 0 \}$. 
The relative cardinalities of these sets satisfy 
$\min_i (| \cV^{(n)}_i | / n) \sim K_n / n$ and 
$\max_i (| \cV^{(n)}_i | / n) \sim K_n / n$.  This ratio is the 
 ``degree of sparsity'' of our model,

Since $\Sigma^{(n)}$ is random, we need to guarantee that 
$\limsup_n \| \Sigma^{(n)} \| < 1$ with probability one to ensure that the LV
ODE is well-defined and has a global equilibrium. We consider this condition
as a hypothesis: 
\begin{hypothesis}
\label{|Sig|} 
$\limsup_n \| \Sigma^{(n)} \| < 1$ with probability one. 
\end{hypothesis} 
Let us provide some comments on this hypothesis. 
The spectral norms of random matrices with structures close to $W$
were studied in~\cite{ban-vha-16} in the framework of Assumption~\ref{ass-mom}. 
In this setting, explicit bounds on the spectral norm of $\Sigma^{(n)}$ can be
obtained in some cases, among which the Gaussian case plays a prominent role.
We note that in general, these bounds are not tight. 
Let $\rownorm A \rownorm$ and $\| A \|_\infty$ be respectively the maximum row
sum norm and the max norm of the matrix $A$ respectively.  When $X^{(n)}$ is a 
Gaussian matrix, we know from \cite[Th.~1.1]{ban-vha-16} that 
\[
\EE\| \Sigma^{(n)} \| \leq T_{\text{Gauss}}^{(n)}, 
\]
where 
\[
T_{\text{Gauss}}^{(n)} = (1+\varepsilon) 
  \left( 2 \rownorm V^{(n)} \rownorm^{1/2} 
  + \frac{6}{\sqrt{\log(1+\varepsilon)}} (\| V^{(n)} \|_\infty \log n)^{1/2}
 \right) 
\]
for an arbitrary $\varepsilon > 0$. Furthermore, by Gaussian concentration, 
\begin{equation}
\label{gauss-con} 
\PP\left[ \| \Sigma^{(n)} \| \geq T_{\text{Gauss}}^{(n)} + t \right] 
  \leq \exp(- t^2 / (2 \| V^{(n)}\|_\infty)^2 ) , 
\end{equation} 
for $\varepsilon \in (0, 1/2]$, as given by \cite[Cor.~3.9]{ban-vha-16}.  By
consequence, it holds that in the Gaussian case,  
$\limsup_n \| \Sigma^{(n)} \| < 1$ with probability one if $\limsup_n
T_{\text{Gauss}}^{(n)} < 1$ for some choice of $\varepsilon \in (0, 1/2]$, 
and if the sequence $(K_n)$ satisfies $K_n \gtrsim \log n$, owing to the 
constraint on $\| V^{(n)}\|_\infty$ provided in Assumption~\ref{ass-A}. 
In the Gaussian case, we can therefore admit a degree of sparsity of 
$\log n / n$. Under our hypothesis~\ref{lv-sigma}, this rate turns out to be 
the optimal rate for Hypothesis~\ref{|Sig|} to be verified, as a consequence 
of \cite[Cor.~3.15]{ban-vha-16}. 

Similar explicit bounds on $\limsup_n \| \Sigma^{(n)} \|$ are available in 
the case where the random variables $X^{(n)}_{ij}$ are bounded,  
see \cite[Cor.~3.12]{ban-vha-16} for more details. Here also, 
Hypothesis~\ref{|Sig|} is ensured when $\rownorm V^{(n)} \rownorm$ and
 $ \| V^{(n)} \|_\infty \log n$ are bounded properly. 

Under Hypothesis~\ref{lv-sigma} without more specific information on the 
distributions of the $X_{ij}$, Hypothesis~\ref{|Sig|} is verified if both 
$\limsup_n \rownorm V^{(n)} \rownorm$ and 
 $\limsup_n \| V^{(n)} \|_\infty (\log n)^{\rho\vee 1}$ are small enough, 
as shown by Proposition~\ref{Wfini}. We thus need that 
$K_n \gtrsim (\log n)^{\rho\vee 1}$ with a large enough factor. 

In any case, we shall also need 
\begin{hypothesis} 
\label{hyp-V} 
$\limsup_n |\!|\!| V^{(n)} |\!|\!| < 1/4$.
\end{hypothesis}
In the classical Wigner case where $V^{(n)} = \alpha n^{-1} 1_n 1_n^\T$ for
some $\alpha > 0$, this condition is necessary and sufficient to ensure that
$\limsup_n \| \Sigma^{(n)} \| < 1$ with probability one, by the well-known
result on the almost sure convergence of $\|\Sigma^{(n)} \|$ to the edge of the
semi-circle law.  Sparse cases that behave similarly to the Wigner case in this
respect, and thus, that require Hypothesis~\ref{hyp-V}, were treated in the
literature. These include the models dealt with in \cite[Sec.~4]{ban-vha-16}.
Similar results can be found in \cite{kho-08,sod-10,ben-pec-14}.  

Regarding the intrinsic growth rate vector, we consider the following 
hypothesis: 
\begin{hypothesis} 
\label{lv-r} 
The vector $r^{(n)}$ is a deterministic vector 
which elements belong to a compact $\cQ_r \subset \RR_{*+}$. 
\end{hypothesis} 

In the practical ecological settings where the intrinsic growth rates within a
given ecosystem are positive, it is not unrealistic to assume that these rates
are lower and upper bounded by positive constants. 

Let $x,y \in \RR^n$.  In the statement and in the proof of the following
theorem, we denote as $xy, x/y, \sqrt{x}$, etc., the $\RR^n$--valued vector
obtained by performing these operations elementwise.  Given a $\RR^n$--valued
random vector $U = [U_i]_{i=1}^n$, the notations $\EE U$ and $\PP[U \geq 0]$
will respectively refer in the remainder to the vectors $[\EE U_i ]_{i=1}^n$
and $[\PP[U_i \geq 0]]_{i=1}^n$. 
\begin{theorem}
\label{th-lvsym} 
Let Hypotheses~\ref{lv-sigma} to~\ref{lv-r} hold true. Then, for each $n$
for which $\| \Sigma^{(n)} \| < 1$, the ODE~\eqref{lv} is well-defined, 
and it has a globally stable equilibrium $u^{(n)}_\star$. For the other 
values of $n$, put $u^{(n)}_\star = 0$. The distribution 
$\mu^{u^{(n)}_\star}$ is a $\cP_2(\RR)$--valued random variable on the
probability space where $\Sigma^{(n)}$ is defined. 

Let $\xi^{(n)} \sim \cN(0, I_n)$. Then, for each $n$ large enough, the system 
of equations 
\begin{subequations}
\label{sys-lv} 
\begin{align}
p &= V^{(n)} \diag(1+\zeta)^{2} \   
  \EE \left(\sqrt{p} \xi^{(n)} + r^{(n)} \right)_+^2 \label{var-lcp} \\
\zeta &= \diag(1+\zeta) V^{(n)} \diag (1+\zeta) \  
  \PP\left[\sqrt{p} \xi^{(n)} + r^{(n)} \geq 0\right]. \label{ons-lcp} 
\end{align}
\end{subequations} 
admits an unique solution $(p,\zeta) = (p^{(n)},\zeta^{(n)}) 
 \in \RR_+^n \times [0,1]^n$. 
This solution satisfies 
\[
\sup_n \| p^{(n)} \|_\infty < \infty. 
\]
Let $Y^{(n)}$ be the Gaussian vector 
\[
Y^{(n)} = \Bigl[ Y^{(n)}_i \Bigr]_{i=1}^n = 
\left( 1 + \zeta^{(n)} \right) \Bigl( \sqrt{p^{(n)}} \xi^{(n)} 
  + r^{(n)} \Bigr) , 
\]
and define the deterministic measure $\bs\mu^{(n)} \in \cP_2(\RR)$ as 
$\bs\mu^{(n)} = \mcL( ( Y^{(n)}_{\theta^{(n)}} )_+ )$, where 
$\theta^{(n)}$ is a uniformly distributed random variable on the set $[n]$,
which is independent of $\xi^{(n)}$. Then 
\[
\bs d_2\left( \mu^{u^{(n)}_\star}, \bs\mu^{(n)} \right) 
 \toprobalong 0 .
\] 
\end{theorem}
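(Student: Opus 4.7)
The equilibrium $u^{(n)}_\star$, when $\|\Sigma^{(n)}\| < 1$, is well-defined and globally stable, and coincides with the unique solution of the Linear Complementarity Problem $(u,w) \geq 0$, $w = (I_n - \Sigma^{(n)}) u - r^{(n)}$, $u \cdot w = 0$, or equivalently with the unique fixed point on $\RR_+^n$ of the map $u \mapsto (\Sigma^{(n)} u + r^{(n)})_+$. Hypothesis~\ref{|Sig|} ensures that this regime occurs with probability tending to one, and measurability of $\mu^{u^{(n)}_\star}$ follows from the Lipschitz dependence of the LCP solution on $\Sigma^{(n)}$ on that event.

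To apply the AMP machinery of Section~\ref{def-amp}, I would take $W^{(n)} = \Sigma^{(n)}$ (so that $S^{(n)} = V^{(n)}$), $\eta^{(n)} = r^{(n)}$, and the time-independent activation $h(x,r,t) = (x+r)_+$, which is globally Lipschitz so that Assumption~\ref{ass-h} holds; Assumption~\ref{nondeg} holds thanks to Hypothesis~\ref{lv-r} since $r_i^{(n)}$ is bounded away from $0$. Corollary~\ref{cor-amp} then furnishes approximating iterates $\tilde x^{(n),t}$ such that the empirical distribution of $\bigl(r_i^{(n)}, (\tilde x^{(n),t}_i + r_i^{(n)})_+\bigr)_{i \in [n]}$ is close in Wasserstein-$2$ distance to a mixture of truncated Gaussians whose covariance profile is governed by the state evolution~\eqref{eq-R}.

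For this choice of $h$, the covariance matrices $R^{(n),t}_i$ collapse to scalar variances $p^{(n),t}_i$ satisfying $p^{(n),t+1}_i = \sum_j v^{(n)}_{ij}\,\EE (\sqrt{p^{(n),t}_j}\,\xi + r_j^{(n)})_+^2$, and tracking the accumulated Onsager corrections across iterations produces the susceptibility vector $\zeta^{(n)}$ together with the $(1+\zeta^{(n)})^2$ factor appearing in~\eqref{var-lcp}. I would prove existence and uniqueness of the fixed point $(p^{(n)}, \zeta^{(n)})$ of~\eqref{sys-lv} for $n$ large by a contraction-mapping argument in $\ell^\infty([n]) \times \ell^\infty([n])$, where Hypothesis~\ref{hyp-V} ($\limsup_n \rownorm V^{(n)} \rownorm < 1/4$) provides a contraction constant strictly less than one uniformly in $n$. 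The uniform bound $\sup_n \| p^{(n)} \|_\infty < \infty$ follows from the elementary inequality $(\sqrt p\,\xi + r)_+^2 \leq 2p + 2r^2$ combined with the same row-sum control.

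The main obstacle is to reconnect the AMP iterates with $u^{(n)}_\star$, i.e.\ to show $\|\hat u^{(n),t} - u^{(n)}_\star\|_n \toprobashort 0$ as $n \to \infty$ and then $t \to \infty$, where $\hat u^{(n),t} := (\tilde x^{(n),t} + r^{(n)})_+$. Following the strategy of~\cite{akj-hac-mai-naj-(arxiv)23}, I would compare $\hat u^{(n),t}$ to $u^{(n)}_\star = (\Sigma^{(n)} u^{(n)}_\star + r^{(n)})_+$, use Corollary~\ref{cor-amp} to identify the limiting ``shift'' produced by the Onsager term as the multiplicative factor $(1+\zeta^{(n)})$, and exploit the nonexpansiveness of $(\cdot)_+$ together with $\|\Sigma^{(n)}\| < 1$ to drive the residual to zero. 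Combined with the state-evolution identification of the asymptotic distribution of $\hat u^{(n),t}$ as that of $((1+\zeta^{(n)})(\sqrt{p^{(n)}}\,\xi^{(n)} + r^{(n)}))_+$, a triangle inequality in $\cP_2(\RR)$ yields $\bs d_2(\mu^{u^{(n)}_\star}, \bs\mu^{(n)}) \toprobashort 0$. The genuinely new technical difficulty compared to the GOE case is that the contraction and fixed-point analyses must be carried out at the level of the $n$-dimensional profiles $(p^{(n)}, \zeta^{(n)})$ rather than scalar parameters, which forces the systematic use of the row-sum norm control of Hypothesis~\ref{hyp-V} in place of the classical Wigner variance.
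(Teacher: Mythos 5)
Your choice of AMP setup is where the argument breaks down. You propose to run AMP with $W^{(n)} = \Sigma^{(n)}$ (hence $S^{(n)} = V^{(n)}$), $\eta^{(n)} = r^{(n)}$, and $h(\ux,\ur) = (\ux+\ur)_+$, and then assert that ``tracking the accumulated Onsager corrections across iterations produces the susceptibility vector $\zeta^{(n)}$ together with the $(1+\zeta^{(n)})^2$ factor in~\eqref{var-lcp}.'' This does not work. With your choice, the state evolution recursion is
\[
p^{t+1}_i = \sum_j v^{(n)}_{ij}\,\EE\bigl(\sqrt{p^t_j}\,\xi + r_j^{(n)}\bigr)_+^2 ,
\]
whose fixed point satisfies $p_i = \sum_j v_{ij}\,\EE(\sqrt{p_j}\xi + r_j)_+^2$, which is \emph{not}~\eqref{var-lcp}: the $(1+\zeta)^2$ factor is absent, and it cannot appear after the fact because the Onsager term is a correction of order one that modifies the residual, not the covariance $\Xi^{(n),t}_i$ in~\eqref{eq-Sigma}. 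The paper instead preconditions \emph{before} running AMP: it sets $S = (1+\zeta)V(1+\zeta)$ and $\eta = (1+\zeta)^{1/2}r$, so that $W = (1+\zeta)^{1/2}\Sigma(1+\zeta)^{1/2}$, and then the manipulation turning the stationary AMP iteration into $u^t \approx \LCP(I-\Sigma, -r - \bs\varepsilon^t)$ goes through exactly. But this makes $S$ and $\eta$ depend on $\zeta$, which is itself determined by the state evolution for those very $S$ and $\eta$. That is precisely why the self-consistent system~\eqref{sys-lv} must be posed and solved \emph{first} (Lemma~\ref{sys-uniq}); it does not fall out of running AMP with $V$ and $r$.

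The uniqueness claim is a second gap. You say existence and uniqueness follow from a contraction in $\ell^\infty \times \ell^\infty$ with Hypothesis~\ref{hyp-V} providing the contraction constant. Equation~\eqref{ons-lcp} has $\zeta$ appearing quadratically on the right-hand side, and I do not see a norm in which the joint map is a contraction; the paper does not claim one. It proves existence by Brouwer's fixed-point theorem on the compact convex set $[0,p_{\max}]^n\times[0,1]^n$, and proves uniqueness by a separate, more delicate replication argument: blow $V^{(n)}$ up to an $nM\times nM$ block matrix $\bV^{(nM)}$, run AMP there with a Gaussian $\bX^{(nM)}$, and show that the per-block empirical measures of $\bu_\star^{(nM)}$ converge as $M\to\infty$ to $\mcL((1+\zeta_i)(\sqrt{p_i}\uxi + r_i)_+)$ for \emph{any} solution $(p,\zeta)$; since $\bu_\star^{(nM)}$ is canonically defined (it does not depend on which solution you picked), the limits must coincide, forcing uniqueness. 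If you want to prove uniqueness via contraction you would need to exhibit the contraction explicitly; Hypothesis~\ref{hyp-V} controls $\rownorm V\rownorm$ but the map in $\zeta$ is not affine, so this is not immediate.

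A smaller point: the alignment step ``$x^t$ and $x^{t+1}$ tend to become aligned'' needs the Montanari--Richard monotonicity of the function $\sH(\uq,\ub) = \EE(G_1+\ub)_+(G_2+\ub)_+/\EE(G_1+\ub)_+^2$ to show the correlation $q^t\to 1$ (Lemma~\ref{cvg-azq}); nonexpansiveness of $(\cdot)_+$ alone does not give this. Your final LCP-perturbation step is fine in spirit: the paper uses the Chen--Xiang $P$-matrix perturbation bound, which in this setting is essentially equivalent to the contraction estimate you invoke.
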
  

\begin{remark}
\label{lv-tight} 
Since $\sup_n \| p^{(n)} \|_\infty < \infty$, the sequence of measures
$(\bs\mu^{(n)})$, which are mixtures of truncated Gaussians with bounded means
and variances, is a pre-compact sequence in the space $\cP_2(\RR)$.  Therefore,
for each sub-sequence of $(n)$, there is a further sub-sequence $(n')$ and a
measure $\bs\nu \in \cP_2(\RR)$ such that $\mu^{u^{(n')}_\star} \to \bs\nu$ in
probability in $\cP_2(\RR)$.  
\end{remark} 

\begin{remark}
Write $u^{(n)}_\star = [ u^{(n)}_{\star,i} ]_{i=1}^n$. By 
Theorem~\ref{th-lvsym}, 
\[
\frac 1n \sum_{i\in[n]} \varphi(u^{(n)}_{\star,i}) - 
 \EE \varphi((Y^{(n)}_{\theta^{(n)}})_+) \toprobalong 0 
\]
for each continuous function $\varphi : \RR \to [0,1]$ such that 
$\varphi(0) = 0$. With this at hand, we can consider the positive
number $\gamma^{(n)} = \PP [ Y^{(n)}_{\theta^{(n)}} > 0 ]$ as an 
approximation of the proportion of surviving species at the equilibrium, in
the sense that $\gamma^{(n)} \simeq 
n^{-1} \sum_{i\in[n]} \varphi(u^{(n)}_{\star,i})$ when $\varphi$ is chosen as 
a ``continuous approximation'' of the real function $\psi(x) = \bs 1_{x > 0}$. 
Of course, this does not imply that 
$\gamma^{(n)} - n^{-1} \sum_i \bs 1_{u^{(n)}_{\star,i} > 0}$ converges to 
zero in probability. 

Note that $\liminf_n \gamma^{(n)} > 1/2$ by Hypothesis~\ref{lv-r} and by the
bound $\sup \| p^{(n)} \|_\infty < \infty$, which implies that the proportion
of species that survive at the equilibrium for large $n$ is lower bounded by 
$1/2$ with a positive gap. 
\end{remark} 

The following corollary to Theorem~\ref{th-lvsym} shows that when the variance
profile matrix is $V^{(n)} = \alpha n^{-1} 1_n 1_n^\T$ with $\alpha \in (0,
1/4)$, then, the large--$n$ behavior of the empirical measure
$\mu^{u^{(n)}_\star}$ is the one predicted by
\cite{bun-17,gal-18,akj-etal-(arxiv)22}. Remember that we are not restricted 
in the present paper to the Gaussian case. 
\begin{corollary}
In the setting of the previous theorem, assume that $V^{(n)} = \alpha n^{-1}
1_n 1_n^\T$ with $\alpha\in(0,1/4)$.  Let $\uxi \sim\cN(0,1)$, and let
$\theta^{(n)}$ be independent of $\uxi$ and uniformly distributed on the set
$[n]$.  Then, there is an unique couple $(\underline p^{(n)},
\underline\zeta^{(n)}) \in \RR_+ \times [0,1]$ such that, dropping the
superscript $^{(n)}$,  
\begin{align*}
\underline p &= \alpha (1 + \underline\zeta)^2 
   \EE_{\theta} \EE_{\uxi} \bigl[ \bigl(\sqrt{\underline p} \, 
   \uxi + r_{\theta} \bigr)_+^2 \bigr],  \\
\underline \zeta &= \alpha (1 + \underline\zeta)^2 
   \EE_{\theta} \PP_{\uxi} \bigl[ \sqrt{\underline p} \, \uxi + 
   r_\theta \geq 0 \bigr] . 
\end{align*}
Moreover, 
\[
\bs d_2\left( \mu^{u_\star}, 
  \mcL\left( (1 + \underline\zeta) 
   \bigl( \sqrt{\underline p} \, \uxi + r_\theta \bigr)_+ \right) \right) 
 \toprobalong 0 . 
\]
\end{corollary}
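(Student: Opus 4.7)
The plan is to apply Theorem~\ref{th-lvsym} directly and exploit the rank-one structure of $V^{(n)} = \alpha n^{-1} 1_n 1_n^\T$ to collapse the $2n$ fixed-point equations~\eqref{sys-lv} into the scalar system stated in the corollary. First I would check that the hypotheses of the theorem are in force: taking $K_n = n$, the matrix $V^{(n)}$ (with the diagonal conventionally set to zero, as required by Section~\ref{def-amp}) satisfies Assumption~\ref{ass-A} and Hypothesis~\ref{lv-sigma}; the bound $\rownorm V^{(n)} \rownorm \to \alpha < 1/4$ gives Hypothesis~\ref{hyp-V}; and Hypothesis~\ref{|Sig|} follows from the classical Wigner edge result $\limsup_n \|\Sigma^{(n)}\| = 2\sqrt{\alpha} < 1$. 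Hypothesis~\ref{lv-r} is part of the standing assumptions.

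The crucial observation is that $V^{(n)}$ maps every vector to a constant one: $V^{(n)} a = \alpha \bar a \cdot 1_n$ with $\bar a = n^{-1} \sum_l a_l$. Applied to the right-hand side of~\eqref{var-lcp}, this forces $p^{(n)} = \underline{p}\,1_n$ for some scalar $\underline p \geq 0$. Substituting this into~\eqref{ons-lcp} yields $\zeta_i = (1+\zeta_i)\,c$ with $c = \alpha n^{-1}\sum_l (1+\zeta_l)\PP[\sqrt{\underline p}\,\uxi + r_l \geq 0]$ independent of $i$; hence $\zeta_i = c/(1-c)$ is also constant, equal to some $\underline\zeta \in [0,1]$. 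Plugging these constant forms back into~\eqref{sys-lv} yields exactly the two scalar equations of the corollary for $(\underline p, \underline\zeta)$. Existence and uniqueness of the scalar solution then follow immediately from the existence and uniqueness of $(p^{(n)}, \zeta^{(n)})$ in Theorem~\ref{th-lvsym} combined with this rigidity: any scalar solution generates a constant vector solution of~\eqref{sys-lv}, and conversely every solution of~\eqref{sys-lv} is constant by the argument above.

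It then remains to identify the limiting measure. With $p^{(n)}, \zeta^{(n)}$ in constant form, the Gaussian vector $Y^{(n)}$ of Theorem~\ref{th-lvsym} simplifies to $Y^{(n)}_i = (1+\underline\zeta)(\sqrt{\underline p}\,\xi^{(n)}_i + r^{(n)}_i)$. Since $\xi^{(n)}$ has i.i.d.\ $\cN(0,1)$ entries and $\theta^{(n)}$ is independent of $\xi^{(n)}$, the coordinate $\xi^{(n)}_{\theta^{(n)}}$ is a standard Gaussian $\uxi$ independent of $r^{(n)}_{\theta^{(n)}}$, so $\bs\mu^{(n)} = \mcL\bigl((1+\underline\zeta)(\sqrt{\underline p}\,\uxi + r_\theta)_+\bigr)$, and the claimed convergence in $\bs d_2$ is a direct transcription of the theorem's conclusion. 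I do not anticipate a serious obstacle; the only slightly awkward point is the mild mismatch between the notation $V^{(n)} = \alpha n^{-1} 1_n 1_n^\T$ (nominally a nonzero diagonal) and the zero-diagonal convention of Section~\ref{def-amp}, but this is an $O(1/n)$ rank-one perturbation that does not affect any asymptotic statement, and in particular does not disturb the projection identity $V^{(n)} a = \alpha \bar a \cdot 1_n$ up to a negligible error.
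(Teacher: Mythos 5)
Your proposal is correct and follows the same route as the paper: exploit the rank-one structure of $V^{(n)}$ to see that the right-hand side of~\eqref{var-lcp} is automatically a constant vector (forcing $p$ constant), deduce from~\eqref{ons-lcp} that $\zeta$ is then constant as well, and read off the scalar system; the paper states this in two sentences without spelling out the steps. Your remark about the zero-diagonal convention is a fair observation (the paper glosses over it too), and your verification of the standing hypotheses and the identification of $\mcL\bigl((Y_\theta)_+\bigr)$ via the independence of $\xi_\theta$ and $\theta$ are all sound.
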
 
To prove this corollary, we notice that due to the structure of $V$, the 
solution of the system~\eqref{sys-lv} is of the form 
$(p^{(n)},\zeta^{(n)})  = (\underline p^{(n)} 1_n, \underline\zeta^{(n)} 1_n)$
where $(p^{(n)},\zeta^{(n)})$ is the solution of the system shown in the
statement. The result follows. \\ 

We now turn to our proofs.  In the remainder of this paper, the notations $C,
c >0$ refer to constants that can change from line to line.  Some of the
statements hold true for $n$ large enough instead of holding for each $n$. This
will not be specified.

\section{Proofs of Theorems~\ref{th-amp-lip} and~\ref{th-amp-gal}} 
\label{prf-amp}

The first and most important part of this section is common to the proofs of
Theorems~\ref{th-amp-lip} and~\ref{th-amp-gal}. The idea is to consider an AMP
sequence where the function $h(\cdot, \ueta,t)$ is replaced with a polynomial,
and to apply to this sequence the combinatorial technique of Bayati, Lelarge
and Montanari in~\cite{bay-lel-mon-15}.  

In the remainder, $c > 0$ and $C > 0$ denote absolute constants that 
can change from a display to another.  

\subsection{AMP with polynomial activation functions} 

\subsubsection{The setting} 

Let $d > 0$ be a fixed integer. Consider a function $p^{(n)} : \RR \times [n]
\times \NN \to \RR$ with the property that $p^{(n)}(\cdot, i, t)$ is a 
polynomial with a degree bounded by $d$, and is thus written as  
\[
p^{(n)}(u,i, t) = \sum_{\ell = 0}^d \alpha_{\ell}^{(n)}(i,t) u^\ell .
\]
Starting with the deterministic vector $\cx^{(n),0} = x^{(n),0}$, 
our purpose here is to study the AMP recursion in the vectors 
$\cx^{(n),1}, \cx^{(n),2},\ldots$ given as 
\begin{equation} 
\label{amp-posc} 
\cx^{(n),t+1} = 
 W^{(n)} p^{(n)}(\cx^{(n),t},\cdot,t) - 
 \diag \Bigl((W^{(n)})^{\odot 2} \partial p^{(n)}(\cx^{(n),t},\cdot,t) \Bigr)  
   p(\cx^{(n),t-1},\cdot,t-1) ,  
\end{equation} 
where, writing $x = [x_i]_{i=1}^n$, we set 
$p^{(n)}(x,\cdot,t) = [ p^{(n)}(x_i,i,t) ]_{i=1}^n$ and 
$\partial p^{(n)}(x,\cdot,t) = [\partial p^{(n)}(\cdot, i,t)|_{x_i} ]_{i=1}^n$,
and we consider that $p^{(n)}(\cdot,\cdot,-1) \equiv 0$. 

The general idea goes as follows.  With the polynomial activation function
$p^{(n)}(\cdot, i, t)$,  the elements of the vectors $\cx^{(n),t}$ are random
variables which are sums of monomials in the elements of $W^{(n)}$ with indices
indexed by labelled trees.  Our purpose will be to compute the moments of these
random variables.  Due to the presence of the ``correction'' so-called Onsager
term $\diag(\cdot) p(\cdot)$, the effect of the paths with the so-called
backtracking edges will be cancelled, rendering these moments close to the
Gaussian moments provided by the state evolution equations below.  A nice
intuitive explanation of the main proof idea is provided in
\cite[\S~2]{bay-lel-mon-15}. 

Let us provide the expressions of the state evolution equations for 
Algorithm~\eqref{amp-posc}. 
Consider the Gaussian sequence $(\cbZ^{(n),t})_{t\in\NN_*}$ described as 
follows. Writing $\cbZ^{(n),t} = \left[ \cZ^{(n),t}_1, \ldots, \cZ^{(n),t}_n 
\right]^\T$ and $\cZ^{(n)}_i = (\cZ^{(n),1}_i, \cZ^{(n),2}_i, \ldots)$, the
sequences $\{\cZ^{(n)}_i\}_{i=1}^n$ are centered, Gaussian, and independent, 
and the covariance matrix $\chR^{(n),t}_i$ of the vector 
$\vec \cZ^{(n),t}_i = [ \cZ^{(n),1}_i, \ldots, \cZ^{(n),t}_i]^\T$ is 
constructed recursively in $t$ as follows. 
\[
\chR^{(n),1}_i = \sum_{l\in[n]} s^{(n)}_{il} 
 p^{(n)}(x^{(n),0}_l, l, 0)^2 , 
\]
and 
\[
\chR^{(n),t+1}_i = \sum_{l\in[n]} s^{(n)}_{il} 
 \EE \begin{bmatrix} 
  p^{(n)}(x^{(n),0}_l, l, 0) \\
  p^{(n)}(\cZ^{(n),1}_l, l, 1) \\
  \vdots \\ 
  p^{(n)}(\cZ^{(n),t}_l, l, t) \end{bmatrix} 
 \begin{bmatrix} 
  p^{(n)}(x^{(n),0}_l, l, 0) & 
  p^{(n)}(\cZ^{(n),1}_l, l, 1) & 
  \cdots & p^{(n)}(\cZ^{(n),t}_l, l, t) \end{bmatrix} . 
\]
With this at hand, the main result of this subsection reads as follows. 
\begin{proposition}
\label{amp-pscal} 
Let Assumptions~\ref{ass-X}--\ref{x0} hold true, and consider the AMP 
algorithm~\eqref{amp-posc}. Assume that for each integer $t\geq 0$, 
\begin{equation}
\label{bnd-ag} 
\sup_n \max_{\ell\leq d} \max_{i\in[n]} | \alpha_{\ell}^{(n)}(i,t) | < \infty.
\end{equation} 
Then, 
\[
\forall t > 0, \quad 
\sup_n \max_{i\in[n]} \| \chR^{(n),t}_i \| < \infty .
\]
Moreover, 
\begin{equation}
\label{mom-cx} 
\forall t > 0, \ \forall m \in \NN_*, \quad 
\sup_n \max_{i\in[n]} \EE |\cx^{(n),t}_i|^m < \infty. 
\end{equation} 
Given $t\in\NN_*$, let $\psi^{(n)} : \RR^t \times [n] \to \RR$ be such that 
$\psi^{(n)}(\cdot, l)$ is a multivariate polynomial with a bounded degree and 
bounded coefficients as functions of $(l,n)$. Let $\cS^{(n)} \subset [n]$ be 
such that $| \cS^{(n)} | \leq C K^{(n)}$. Then, 
\begin{subequations}
\label{cvg-pscal} 
\begin{align} 
&\frac{1}{|\cS^{(n)}|} \sum_{i\in \cS^{(n)}} 
 \psi^{(n)}(\cx^{(n),1}_i,\ldots,\cx^{(n),t}_i,i) - 
   \EE\psi^{(n)}(\cZ^{(n),1}_i, \ldots, \cZ^{(n),t}_i,i) \toprobalong 0 , 
        \quad \text{and} \label{cvg-small}  \\ 
&\frac{1}{n} \sum_{i\in[n]} 
 \psi^{(n)}(\cx^{(n),1}_i,\ldots,\cx^{(n),t}_i,i) - 
   \EE\psi^{(n)}(\cZ^{(n),1}_i, \ldots, \cZ^{(n),t}_i,i) \toprobalong 0 . 
\end{align}
\end{subequations} 
\end{proposition}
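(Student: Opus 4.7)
The plan is to adapt the combinatorial moment method of \cite{bay-lel-mon-15} to the sparse variance profile structure of $W^{(n)}$. The first step is to unfold the recursion~\eqref{amp-posc} iteratively: each $\cx^{(n),t}_i$ becomes a polynomial in the entries $W^{(n)}_{jk}$ of total degree at most $O(d^t)$, and each monomial in this polynomial can be encoded by a walk on $[n]$ whose steps are edges of $W^{(n)}$, labelled by iteration indices and by the powers of $\ux$ picked up from $p^{(n)}$. The point of the second term in~\eqref{amp-posc} --- the Onsager correction involving $(W^{(n)})^{\odot 2}$ --- is to exactly cancel those walks in which the last step immediately backtracks over the previous one, so that the surviving monomials correspond to walks that are non-backtracking in the precise combinatorial sense of \cite{bay-lel-mon-15}.

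Next, I would compute $\EE \prod_{s} \cx^{(n),t_s}_i$ by expanding as a sum over tuples of such walks and taking expectations factor-by-factor. By Assumption~\ref{ass-X}, only configurations in which the $W$-factors pair up (or form higher multiplicities) survive, and the dominant contributions come from walk tuples whose edge multigraph is a tree in which every edge is traversed exactly twice. The polynomial read off from such a tree matches exactly the corresponding Gaussian joint moment of $\vec\cZ^{(n),t}_i$ whose covariance is built by the recursion defining $\chR^{(n),t}_i$; indeed the Onsager cancellation is precisely what eliminates the ``diagonal'' configurations in which an edge would leave an uncompensated $\EE W_{jk}^2$ inside the walk.

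The main technical obstacle is the bookkeeping in the sparse regime, where the dense BLM estimates need to be refined. Each edge in a tree contributes a variance factor $s^{(n)}_{jk} \leq C K_n^{-1}$, while Assumption~\ref{ass-A} guarantees that, for each vertex already placed in the walk, there are at most $C K_n$ choices for a neighbour with positive variance. Thus each paired edge contributes a factor of order $K_n \cdot K_n^{-1} = 1$, which balances exactly as in the dense $K_n = n$ case and preserves the leading tree count. Sub-leading configurations --- walks whose pair-graph is not a tree, or walks using higher moments $\EE |X^{(n)}_{jk}|^{2\ell}$ with $\ell > 1$ on edges traversed more than twice --- lose at least one factor of $K_n^{-1}$ without a matching vertex freedom, and are therefore $o(1)$ as $K_n \to \infty$ thanks to the moment bounds of Assumption~\ref{ass-X} and the coefficient bound~\eqref{bnd-ag}. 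This single step simultaneously yields the moment bound~\eqref{mom-cx}, the bound $\sup_n \max_i \| \chR^{(n),t}_i \| < \infty$ (by induction on $t$ from the recursion defining $\chR^{(n),t+1}_i$), and convergence of every single-index joint moment of $(\cx^{(n),1}_i,\ldots,\cx^{(n),t}_i)$ to the Gaussian target moment.

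Finally, to upgrade single-index moment convergence to the empirical statement~\eqref{cvg-pscal}, I would expand the test function $\psi^{(n)}$ as a polynomial and control the variance of $|\cS^{(n)}|^{-1}\sum_{i\in\cS^{(n)}} \psi^{(n)}(\cx^{(n),1}_i,\ldots,\cx^{(n),t}_i,i)$. The diagonal part is $O(1/|\cS^{(n)}|) = O(1/K_n)$ by the moment control just established. For pairs $i\neq i'$, the same tree expansion applies to the joint moment $\EE[\psi^{(n)}(\cdots,i)\,\psi^{(n)}(\cdots,i')]$: the disconnected pair-trees reproduce the product of individual expectations, while any pair-tree connecting the $i$-walks to the $i'$-walks costs at least one additional edge factor $K_n^{-1}$ against only $O(K_n)$ new vertex choices. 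Hence the covariance between two distinct indices is $O(K_n^{-1})$, and Chebyshev's inequality combined with $K_n \to \infty$ and $|\cS^{(n)}| \leq C K_n$ delivers both statements in~\eqref{cvg-pscal}; the small-set convergence~\eqref{cvg-small} is the worst case but is still handled because the off-diagonal covariances decay at the same rate $K_n^{-1}$ that governs the size of $\cS^{(n)}$.
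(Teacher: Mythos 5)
Your proposal follows the same Bayati--Lelarge--Montanari combinatorial moment method as the paper: expand the iterates into labelled trees, isolate the pair-tree contributions, and balance edge factors $s_{jl}^{(n)}\leq CK_n^{-1}$ against the $\leq CK_n$ choices per vertex permitted by Assumption~\ref{ass-A}. That balance is indeed the heart of the adaptation to the sparse profile, and you have correctly identified it.

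Two places where you cut corners and the gap is substantive. First, the Onsager correction does \emph{not} ``exactly cancel'' the backtracking walks. The iterate $\cx^t_i$ does not equal the non-backtracking iterate $z^t_i$; the difference is a sum over trees with at least one backtracking path or backtracking star (the set $\cB^t_i$ in the paper's Lemma~\ref{xtree}), because the Onsager term uses $(W^{(n)})^{\odot 2}$ and the previous iterate rather than the exact quantities that would achieve a literal cancellation. Controlling that residual is a separate counting step (the paper's Proposition~\ref{z-x}), in which the presence of a backtracking feature forces the merged pair-graph to have strictly fewer free vertex labels, costing a factor $K_n^{-1/2}$. If you simply assert exact cancellation, you skip an entire proposition's worth of work, and that work is where the ``backtracking $\Rightarrow$ sub-leading'' claim actually has to be proved.

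Second, for the variance control in~\eqref{cvg-pscal} your plan is to bound $\cov\bigl(\psi(\cdots,i),\,\psi(\cdots,i')\bigr)$ directly by a cross-tree expansion. This is a legitimate alternative to the paper's device, and the scaling you announce ($O(K_n^{-1})$ off-diagonal, $O(1/|\cS^{(n)}|)$ diagonal, both vanishing when $|\cS^{(n)}|\sim K_n$) is correct. But to make it rigorous you would have to redo the entire chain --- tree-to-AMP comparison, universality with respect to the law of $X$, the passage to Gaussian matrices, the Poincar\'e argument --- for \emph{two-rooted} configurations $(T_1,T_2)$ with $T_1$ rooted at $i$, $T_2$ at $i'$, keeping track of shared vertices versus shared edges and of the residual $\cB^t_i,\cB^t_{i'}$ contributions. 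The paper avoids this by a cleaner trick: it vectorizes the AMP (Proposition~\ref{amp-vec}), then in Proposition~\ref{varphi} builds an auxiliary AMP over an enlarged $2n\times 2n$ block matrix whose extra row computes $\sum_{l\in\cS}\chW_{1l}\psi(\bcx^t_l,l)$; the fourth moment of this scalar is then read off from the \emph{already-proved single-root} moment convergence, and a comparison with the Gaussian fourth moment yields the variance bound without any cross-tree combinatorics. Your route is workable but considerably heavier than you acknowledge; the paper's auxiliary-AMP construction exists precisely to sidestep that bookkeeping. If you want to pursue the direct cross-moment approach, the backtracking residuals and the universality step for pairs $(T_1,T_2)$ are the pieces you must supply explicitly.
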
 

The proof of this proposition will revolve around a version of our AMP sequence
where the $\RR^n$--valued vectors $\cx^{(n),t}$ will be replaced with
$\RR^{n\times q}$--valued matrices, with $q > 0$ being a given integer.  
At the same time, the test function $\psi^{(n)}(\cdot,i)$ acting on the 
iterates $\cx^{(n),1}_i,\ldots,\cx^{(n),t}_i$ above will be replaced with
a test function acting on the $t^{\text{th}}$ iterate only. In all the 
remainder, a vector $\bx \in \RR^q$ will be written as 
$\bx = [ x(1),\ldots, x(q) ]^\T$. Consider the function 
\[
\begin{array}{lcccc} 
f^{(n)} &:& \RR^q \times [n] \times \NN &\longrightarrow & \RR^q \\
 & & (\bu,l,t) & \longmapsto &  
  \begin{bmatrix} f_1^{(n)}(\bu,l,t) \\ \vdots \\ f_q^{(n)}(\bu,l,t)
 \end{bmatrix} , 
\end{array}
\]
which is a polynomial in $\bu$, with degree bounded by $d$, written as 
\[
f_r^{(n)}(\bu,l,t) = \sum_{i_1+\cdots+i_q = 0}^d 
 \alpha^{(n)}_{i_1,\ldots,i_q}(r,l,t) \prod_{s=1}^q u(s)^{i_s}. 
\]
Starting with the deterministic $n$-uple 
$(\bcx^{(n),0}_1,\ldots, \bcx^{(n),0}_n)$ with $\bcx^{(n),0}_i \in\RR^q$, 
the AMP recursion in $t$ will provide us at Iteration $t+1$ with the 
$n$-uple $(\bcx^{(n),t+1}_1,\ldots, \bcx^{(n),t+1}_n)$, where 
$\bcx^{(n),t+1}_i = [ x^{(n),t+1}_{i}(1), \ldots, x^{(n),t+1}_{i}(q)]^\T$
is given as 
\begin{multline} 
\label{ampol} 
x^{(n),t+1}_{i}(r) = 
 \sum_{l\in[n]} W_{i,l}^{(n)} f_r^{(n)}(\bcx^{(n),t}_l,l,t) \\ 
- \sum_{s\in[q]} f_s^{(n)}(\bcx^{(n),t-1}_i,i,t-1)
  \sum_{l\in[n]} (W_{i,l}^{(n)})^2 
  \frac{\partial f_r^{(n)}}{\partial x(s)} 
   (\bcx^{(n),t}_l,l,t) , \quad r \in [q], 
\end{multline} 
with $f(\cdot, \cdot, -1) \equiv 0$. 

We also introduce the following sequence of Gaussian $\RR^{nq}$--valued
random vectors $(U^{(n),t})_{t\in\NN_*}$. Writing 
\[
U^{(n),t} = \begin{bmatrix} U^{(n),t}_1 \\ \vdots \\ U^{(n),t}_n
\end{bmatrix}, 
\]
the $\{ U^{(n),t}_i \}_{i\in[n]}$ are $\RR^q$--valued independent 
Gaussian random vectors, $U^{(n),t}_i \sim \cN\left( 0, Q^{(n),t}_i \right)$, 
and the covariance matrices $Q^{(n),t}_i$ are recursively defined in $t$ as 
follows. Starting with $t=1$, we set 
\[
Q^{(n),1}_i = \sum_{l\in[n]} s_{i l}^{(n)} 
 f^{(n)}(\bcx^{(n),0}_l, l, 0)
 f^{(n)}(\bcx^{(n),0}_l, l, 0)^\T \quad \text{for} \ i \in [n]. 
\]
Given $\{ Q^{(n),t}_i \}_{i\in[n]}$, we set 
\[
Q^{(n),t+1}_i = \sum_{l\in[n]} s_{il}^{(n)} 
 \EE f^{(n)}(U^{(n),t}_l, l, t) f^{(n)}( U^{(n),t}_l, l, t)^\T 
  \quad \text{for} \ i \in [n]. 
\] 
The correlations between the elements of $U^{(n),t}$ and $U^{(n),s}$ for 
$t\neq s$ are irrelevant to our purpose. 

Given $\bx \in \RR^q$ and a multi-index $\bm = \begin{bmatrix}
m(1),\ldots, m(q) \end{bmatrix} \in \NN^q$, we write in the sequel 
\[
{\bs x}^{\bs m} = \prod_{r\in[q]} x(r)^{m(r)} . 
\]
\begin{proposition}
\label{amp-vec} 
Let Assumptions~\ref{ass-X} and \ref{ass-A} hold true, and consider the 
iterative algorithm\eqref{ampol}. Assume that for each $t > 0$, there is a 
constant $C > 0$ such that 
\begin{equation}
\label{alpha-vec} 
\left| \alpha^{(n)}_{i_1,\ldots,i_q}(r,l,t) \right| \leq C, 
\end{equation} 
and furthermore, 
\begin{equation}
\label{bnd-x0} 
\sup_n \max_{i\in[n]} \left\| \bcx^{(n),0}_{i} \right\|  < \infty. 
\end{equation} 
Then, 
\begin{equation} 
\label{Qfini} 
\forall t > 0, \quad 
\sup_n \max_{i\in[n]} \| Q^{(n),t}_i \| < \infty . 
\end{equation} 
Moreover, 
\begin{equation}  
\label{Exbnd} 
\forall t > 0, \ \forall \bm \in \NN^q, \quad 
\sup_n \max_{i\in[n]} \EE | (\bcx^t_i)^\bm | < \infty. 
\end{equation} 
Let $\psi^{(n)} : \RR^q \times [n] \to \RR$ be such that 
$\psi^{(n)}(\cdot, l)$ is a multivariate polynomial with a bounded 
degree and bounded coefficients as functions of $(l,n)$. 
Let $\cS^{(n)} \subset [n]$ be a non empty set such that 
$| \cS^{(n)} | \leq C K^{(n)}$. Then, 
\begin{subequations}
\label{cvg-vec} 
\begin{align} 
&\frac{1}{K^{(n)}} \sum_{i\in \cS^{(n)}} 
 \psi(\bcx^{(n),t}_i,i) - \EE\psi(U^{(n),t}_i,i) \toprobalong 0 , 
        \quad \text{and} \\
&\frac{1}{n} \sum_{i\in[n]} 
 \psi(\bcx^{(n),t}_i,i) - \EE\psi(U^{(n),t}_i,i) \toprobalong 0 . 
\end{align}
\end{subequations} 
\end{proposition}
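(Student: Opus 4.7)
The plan is to adapt the combinatorial moment-computation technique of Bayati, Lelarge, and Montanari~\cite{bay-lel-mon-15} to the sparse variance-profiled setting, proceeding by induction on $t$. For the base case $t=0$ everything is deterministic and bounded by~\eqref{alpha-vec}--\eqref{bnd-x0}. For the inductive step, the central tool is to iteratively expand each coordinate $\cx^{(n),t+1}_{i}(r)$ via~\eqref{ampol}: this produces a polynomial in the entries $\{W^{(n)}_{ij}\}$ of degree bounded by a constant depending only on $t$ and $d$. Each monomial in this expansion is indexed by a labelled rooted tree whose vertices carry indices in $[n]$ together with iteration/coordinate labels, and whose edges are labelled by entries of $W^{(n)}$ (squared entries for the edges arising from the Onsager correction). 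Writing $\EE(\bcx^{(n),t+1}_{i})^{\bm}$ as a sum over $|\bm|$--tuples of such trees and invoking the independence of the off-diagonal entries of $W^{(n)}$, the expectation of each tuple factorizes according to how edges are paired.

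The key combinatorial mechanism is that the Onsager term in~\eqref{ampol} is precisely tuned to cancel the contribution of every tree-pairing containing a \emph{backtracking edge}, that is, an edge traversed in one subtree and then immediately retraversed in the opposite direction. After this cancellation, the surviving pairings carry a product of variance factors $\prod s^{(n)}_{i_a i_b}$ summed over internal indices, and by the inductive hypothesis these sums telescope into the recursive definition of $Q^{(n),t+1}_i$. The dominant contribution therefore equals the Gaussian moment $\EE (U^{(n),t+1}_i)^{\bm}$. The error terms involve either higher-order cumulants of the $W^{(n)}_{ij}$ (controlled by Assumption~\ref{ass-X} via the uniform moment bounds $\Cmom(k)$) or ``coincidence'' patterns where two trees share an internal vertex, each coincidence costing a factor of order $K_n^{-1}$ because $s^{(n)}_{ij}\leq CK_n^{-1}$ while the row sparsity $|\{j:s^{(n)}_{ij}>0\}|\leq CK_n$ restricts the number of available indices. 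This simultaneously yields~\eqref{Qfini} and~\eqref{Exbnd}.

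For the convergence statements~\eqref{cvg-vec}, it suffices by polynomial linearity to establish the claim for a single monomial test function. The first-moment convergence is an immediate consequence of the moment identity above. For the variance, one computes
\[
\var\Bigl(\tfrac{1}{|\cS^{(n)}|}\sum_{i\in\cS^{(n)}}\psi(\bcx^{(n),t}_i,i)\Bigr)=\tfrac{1}{|\cS^{(n)}|^2}\sum_{i,i'\in\cS^{(n)}}\cov\bigl(\psi(\bcx^{(n),t}_i,i),\psi(\bcx^{(n),t}_{i'},i')\bigr),
\]
and shows each covariance is negligible unless the tree expansions rooted at $i$ and $i'$ share an edge. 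Sharing an edge forces a chain of internal indices to coincide, and each coincidence again costs a factor of order $K_n^{-1}$. An edge-counting argument, analogous to~\cite[\S 3]{bay-lel-mon-15} with the $1/n$ factors there replaced by $1/K_n$ factors, shows the total variance is $O(1/K_n)$; the normalization $|\cS^{(n)}|\leq CK_n$ is exactly what makes the subset-sum claim work, while the $1/n$ normalization is even more forgiving.

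The main obstacle will be the careful bookkeeping of tree pairings in the presence of the sparse, possibly highly inhomogeneous variance profile: in particular, verifying that the cancellation of backtracking contributions proceeds exactly as in the dense GOE case despite the variance inhomogeneities, and confirming that all non-Gaussian error terms carry enough factors of $K_n^{-1}$ to vanish under the sparsity regime permitted by Assumption~\ref{ass-A}. The variance estimate leading to~\eqref{cvg-small}, where only $|\cS^{(n)}|\leq CK_n$ terms are averaged rather than $n$, is the most delicate step since it has the least averaging room and depends directly on the sparse-variance decay to provide the required $o(1)$ bound.
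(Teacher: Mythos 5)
Your overall framework — tree expansion of the iterates, Wick-type pairing of edges, and a counting argument where each extra coincidence of internal indices costs a factor $K_n^{-1}$ instead of the usual $n^{-1}$ — is indeed the right adaptation of~\cite{bay-lel-mon-15} and matches the skeleton of the paper's proof. However, two parts of your plan are compressed to the point of hiding essential steps, and a third is done by a genuinely different route than the paper.

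First, the claim that the Onsager correction ``precisely cancels every tree-pairing containing a backtracking edge'' overstates what actually happens. The paper (following BLM15) does not establish exact cancellation in the moment expansion. It instead introduces the non-backtracking iterates $\bz^t_i$ as an intermediate object, proves (Lemma~\ref{xtree}) that $\cx^t_i(r) = z^t_i(r) + \sum_{T\in\cB^t_i} W(T)\widetilde\Gamma(T,t,r)\cx(T)$ where $\cB^t_i$ collects trees with a backtracking path or star, and then shows (Proposition~\ref{z-x}) that this correction contributes only $O(K_n^{-1/2})$ because any backtracking pattern forces the merged graph $\bG(T_1,\ldots,T_m)$ to have an edge of multiplicity at least three, which costs a coincidence. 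You would need to make this decomposition explicit; claiming the Onsager term cancels backtracking pairings outright would mislead.

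Second, the statement that ``these sums telescope into the recursive definition of $Q^{(n),t+1}_i$'' hides the hardest identification. The paper does not match the dominant pairings to Gaussian moments directly from the tree expansion. Instead it goes through three intermediate reductions: a universality step (Proposition~\ref{z-tz}) showing $\EE(\bz^t_i)^{\bm}$ depends only on the first two moments of the $W_{ij}$ up to $O(K_n^{-1/2})$; a ``fresh matrices'' step (Proposition~\ref{z-y}) replacing $W$ with an i.i.d.\ sequence $W^0,W^1,\ldots$ so that the iterates at different times become conditionally Gaussian and independent; and a conditioning-plus-Poincar\'e argument (Proposition~\ref{cvg-y}) that identifies the limit of $\EE(\by^t_i)^{\bm}$ with $\EE(U^t_i)^{\bm}$ by showing the sample covariance $H^t_i$ concentrates around $Q^t_i$. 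Your plan makes no mention of any of these, and it is not evident how you would identify the limit of the dominant tree sums with the state-evolution Gaussian without some version of them.

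Third, for the concentration~\eqref{cvg-vec} the paper uses an augmentation trick: it builds a $2n\times 2n$ block matrix $\begin{bmatrix} W & 0\\ 0 & \widecheck W\end{bmatrix}$ with an auxiliary matrix $\widecheck W$ supported on the set $\cS$, appends one extra AMP iteration that computes the desired average, and then estimates a fourth moment of that extra coordinate via the already-proved moment formulas, comparing it with the corresponding Gaussian fourth moment. You instead propose a direct expansion of $\cov(\psi(\bcx_i,i),\psi(\bcx_{i'},i'))$ and a per-pair $O(K_n^{-1})$ bound. That route is plausible in principle (each nontrivial pair forces at least one extra coincidence), but it has not been worked out here and is more delicate than the augmentation device, which cleverly recycles the moment computation you have already done. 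This is the one place where your approach genuinely diverges from the paper rather than merely being a sketch of it.
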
 

The proof of Proposition~\ref{amp-vec} is an adaptation of the approach
of~\cite{bay-lel-mon-15} to the structure of the variance profile at interest
in this paper.  For self-containedness, we reproduce large parts of the proof
of~\cite{bay-lel-mon-15}, putting the focus on the parts of this proof where
these adaptations are necessary. 

In the remainder, the superscript $^{(n)}$ will be often omitted for notational
simplicity.

\subsubsection{Proof of Proposition~\ref{amp-vec}} 

Let us quickly prove that $\sup_n \max_{i\in[n]} \left\| Q^t_i \right\| < 
\infty$ for each integer $t > 0$. For $t=1$, this
is a consequence of Assumption~\ref{ass-A} and the bounds~\eqref{alpha-vec}
and~\eqref{bnd-x0}. Assume the result is true for $t$. Then, using the
bound~\eqref{alpha-vec} again, standard Gaussian derivations show that
$\sup_n\max_i \left\| \EE f(U^{t}_i, i, t) f(U^{t}_i, i, t)^\T \right\| < 
\infty$. Then, using Assumption~\ref{ass-A}, we
obtain from the expression of $Q^{t+1}_i$ above that 
$\sup_n\max_i\left\| Q^{t+1}_i \right\| < \infty$. 

Of importance in the proof of Proposition~\ref{amp-vec} are the sequences 
issued from the so-called ``non-backtracking'' iterations. Given any $i,j\in
[n]$ with $i\neq j$, define the set of $\RR^q$--valued vectors $\{ \bz^0_{i\to
j}, \ i,j \in [n], \ i\neq j \}$ as $\bz^0_{i\to j} = \bcx^0_i$. Assuming that
the $\RR^q$--valued vectors $\{ \bz^t_{i\to j}, \ i,j \in [n], \ i\neq j \}$
are defined, the vectors $\bz^{t+1}_{i\to j}$ for $i\neq j$ are given as 
\begin{equation}
\label{zij(r)} 
z^{t+1}_{i\to j}(r) = \sum_{l\in[n]\setminus\{j\}} 
  W_{il} f_r(\bz^t_{l\to i}, l, t) 
\end{equation} 
(here, we implicitly consider that in the summation over $l$, the case 
$l= i$ is excluded because $W_{ii} = 0$). 
Having the vectors $\{ \bz^t_{i\to j}, \ i,j \in [n], \ i\neq j \}$ at hand, we
define the $\RR^q$--valued vectors $\{ \bz_i^{t+1}, \ i \in [n] \}$ as 
\begin{equation}
\label{zi(r)}
z^{t+1}_{i}(r) = \sum_{l\in[n]} W_{il} f_r(\bz^t_{l\to i}, l, t) . 
\end{equation} 

For each $n$, let us now consider an i.i.d.~sequence
$(W^{(n),t})_{t=0,1,\ldots}$ of symmetric $n\times n$ matrices such that
$W^{(n),t} \eqlaw W^{(n)}$.  We define the vectors $\by^t_{i\to j}$ and
$\by^t_i$ recursively in $t$ similarly to what we did for the vectors
$\bz^t_{i\to j}$ and $\bz^t_i$, with the difference that we now replace the
matrix $W$ with the matrix $W^t$ at step $t$. More precisely, we set
$\by^0_{i\to j} = \bcx^0_i$ for each $i,j \in [n]$ with $i\neq j$; Given $\{
\by^t_{i\to j}, \ i,j \in [n], \ i\neq j \}$, we set  
\[
y^{t+1}_{i\to j}(r) = \sum_{l\in[n]\setminus\{j\}} 
  W_{il}^t f_r(\by^t_{l\to i}, l, t) ,  \quad i \neq j. 
\]
Also, 
\[
y^{t+1}_{i}(r) = \sum_{l\in[n]} W_{il}^t f_r(\by^t_{l\to i}, l, t) . 
\]

\paragraph{The tree structure.}

The pivotal object in the proof is a rooted and labelled tree $T$ with the
following structure (we paraphrase here \cite[\S 4.2]{bay-lel-mon-15} with the
same notations).  Write $T = (V(T), E(T))$ where $V(T)$ and $E(T)$ are the sets
of vertices and edges of $T$ respectively.  We also denote as $L(T)$ the set of
the leaves of $T$.  Denote as $\circ \in V(T)$ the root of the tree, and let
$|u|$ be the distance of a vertex $u$ to $\circ$.  We consider that the edges
are oriented towards the root.  Thus, considering the edge $(u\to v) \in E(T)$,
we have that $v = \pi(u)$, where $\pi(u)$ the parent of the node $u$.  We
assume that the root has only one child (thus, $T$ is a so called planted tree).
Each vertex other than the root and the leaves can have up to $d$ children,
thus, its degree is bounded by $d+1$. By definition, the degree of a leaf is
one, its only neighbor being its parent.  

The tree $T$ is a labelled tree. We now describe this labelling.  The label of
the root is an integer $\ell(\circ) \in [n]$. The label of a vertex $v$ which
is neither the root nor a leaf is a couple $(\ell(v), r(v)) \in [n] \times
[q]$. The label of a leaf $v$ is $(\ell(v), r(v), v[1], \ldots, v[q]) \in [n]
\times [q] \times \{ (a_1, \ldots, a_q) \in \NN^q, \ a_1+\cdots+a_q \leq d \}$.
If $v\in L(T)$ and $|v| \leq t-1$, then we set $v[1] = \cdots = v[q]) = 0$. 

The integer $\ell(v)$ is denoted as the ``type'' of the vertex $v$, and $r(v)$
is the ``mark'' of this vertex when it exists. 

For a vertex $u \in V(T) \setminus L(T)$, we denote as $u[r]$ the number of
children of $u$ with the mark $r$. The children of $u \in V(T)\setminus L(T)$
are ordered with respect to their mark. Specifically, the labels of the
children of $u$ are $(\ell^1,1), \ldots, (\ell^{u[1]}, 1), (\ell^{u[1]+1}, 2),
\ldots, (\ell^{u[1]+\cdots+u[q]}, q)$

Specific classes of these trees will be of importance. Following the
definitions and notations of \cite{bay-lel-mon-15}, we introduce the following
families of trees: 
\begin{itemize} 
\item $\overline\cT^t$ is the set of labelled trees as above, with depth 
 $t$ at most. 
\item $\cT^t \subset \overline\cT^t$ is the subset that additionally satisfies 
the following so-called non-backtracking condition: if $v_1 = \circ, v_2, 
 \ldots, v_k$ is a path starting from $\circ$, \emph{i.e.}, 
 $v_i = \pi(v_{i+1})$, then the corresponding sequence of types is 
non-backtracking. This means that for each $i \in [k-2]$, the three types
$\ell(v_i)$, $\ell(v_{i+1})$ and $\ell(v_{i+2})$ are distinct. 
\item $\cT^t_{i\to j}(r) \subset \cT^t$ is the subset of trees in $\cT^t$ for
which the type of the root is $i$, the type of the child $v$ of the root 
satisfies $\ell(v) \not\in \{ i, j \}$, and the mark of $v$ is $r(v) = r$.  
\item $\cT^t_{i}(r) \subset \cT^t$ is the subset of trees in $\cT^t$ for
which the type of the root is $i$, the type of the child $v$ of the root 
satisfies $\ell(v) \neq i$, and the mark of $v$ is $r(v) = r$.  
\end{itemize}

Given a tree $T$, we also set 
\begin{align*}
W(T) &= \prod_{(u\to v) \in E(T)} W_{\ell(u) \ell(v)}, \\
\Gamma(T, \balpha, t) &= \prod_{(u\to v) \in E(T)} 
  \alpha_{u[1],\ldots,u[q]}(r(u), \ell(u), t - |u|), \\
x(T) &= \prod_{v\in L(T)} \prod_{s\in[q]} 
  \left( x^0_{\ell(v)}(s)\right)^{v[s]} .
\end{align*}

\begin{lemma}[Lemma 1 of \cite{bay-lel-mon-15}]
\label{ztree} 
\begin{align}
z^t_{i\to j}(r) &= \sum_{T\in \cT^t_{i\to j}(r)} W(T) \Gamma(T,\balpha, t) 
 x(T), \nonumber  \\
z^t_{i}(r) &= \sum_{T\in \cT^t_{i}(r)} W(T) \Gamma(T,\balpha, t) x(T) . 
  \nonumber 
\end{align}
\end{lemma}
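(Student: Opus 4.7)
The plan is to prove both identities by simultaneous induction on $t \geq 1$, exploiting the fact that one step of the non-backtracking recursion~\eqref{zij(r)}--\eqref{zi(r)} adds exactly one layer to the tree, while the polynomial expansion of $f_r$ dictates the branching structure at the newly added vertex. The identity for $z^t_i(r)$ will follow from that for $z^t_{i\to j}(r)$ by replacing the summation range $l\in[n]\setminus\{j\}$ with $l\in[n]$ and dropping the constraint $\ell(v)\neq j$ on the child of the root.

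For the base case $t=1$, I would expand
\[
z^{1}_{i\to j}(r) = \sum_{l\neq j} W_{il} f_r(\bcx^0_l,l,0)
 = \sum_{l\neq j} W_{il} \sum_{i_1+\cdots+i_q\leq d} \alpha_{i_1,\ldots,i_q}(r,l,0) \prod_{s}\bigl(\cx^0_l(s)\bigr)^{i_s}.
\]
Since $W_{ii}=0$, the $l=i$ contribution vanishes, so the effective range is $l\notin\{i,j\}$. Each surviving monomial is then in bijection with a tree $T\in\cT^1_{i\to j}(r)$ whose child $v$ of the root is a leaf of type $l$, mark $r$, and leaf multi-index $(i_1,\ldots,i_q)$. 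A direct check shows $W(T)=W_{il}$, $\Gamma(T,\balpha,1)=\alpha_{v[1],\ldots,v[q]}(r,l,0)$ (since $|v|=1$ and $t-|v|=0$), and $\cx(T)=\prod_s(\cx^0_l(s))^{v[s]}$.

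For the inductive step, I would substitute the induction hypothesis into the recursion:
\[
z^{t+1}_{i\to j}(r) = \sum_{l\neq j} W_{il}\sum_{i_1+\cdots+i_q\leq d}\alpha_{i_1,\ldots,i_q}(r,l,t)\prod_{s\in[q]}\bigl(z^{t}_{l\to i}(s)\bigr)^{i_s},
\]
and then expand each power $(z^t_{l\to i}(s))^{i_s}$ as a sum over ordered tuples of trees in $\cT^t_{l\to i}(s)$. Every ordered collection $(T^{(s)}_k)_{s\in[q],\,k\in[i_s]}$ can then be glued into a single tree $T\in\cT^{t+1}_{i\to j}(r)$ as follows: attach a new root $\circ$ of type $i$, give it a unique child $v$ of type $l$ with mark $r(v)=r$ and multi-indices $v[s]=i_s$, and identify each child-of-the-root of $T^{(s)}_k$ with the $k$-th mark-$s$ child of $v$, carrying along the corresponding subtree. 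The non-backtracking condition is automatic: the children of $v$ have types different from $l=\ell(v)$ and from $i=\ell(\circ)$ precisely because each $T^{(s)}_k$ lies in $\cT^t_{l\to i}(s)$; moreover $l\neq j$ by the summation range, and $l\neq i$ because $W_{ii}=0$. Tracking the factors yields $W(T)$ for the products of $W_{il}$'s and the $W(T^{(s)}_k)$'s, $\Gamma(T,\balpha,t+1)$ from the new $\alpha_{v[1],\ldots,v[q]}(r,l,t)$ (where $t+1-|v|=t$ since $|v|=1$) together with the inductive $\Gamma$'s, and $\cx(T)=\prod_{s,k}\cx(T^{(s)}_k)$ since the leaves of $T$ are exactly the disjoint union of the leaves of the $T^{(s)}_k$'s.

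The main subtlety, and where I would spend the most care, is checking that the gluing procedure is a bijection between the ordered tuples of trees arising from the multinomial expansion and the trees $T\in\cT^{t+1}_{i\to j}(r)$. This relies on the convention fixed in Section~\ref{def-amp} that the children of any internal vertex are ordered by mark and, within each mark class, carry a prescribed order, which matches the ordering in the monomial expansion of $(\sum_{T^{(s)}}\cdots)^{i_s}$. Once this combinatorial identification is settled, the identity for $z^t_i(r)$ follows by repeating the same argument with the summation extended to all $l\in[n]$, which relaxes the constraint $\ell(v)\neq j$ to only $\ell(v)\neq i$, producing exactly the family $\cT^t_i(r)$.
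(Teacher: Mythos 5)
Your induction is correct and is precisely the structure of the proof in Bayati--Lelarge--Montanari~\cite{bay-lel-mon-15}, which the paper cites verbatim for this lemma (the text explicitly notes that the proof ``is not impacted by our specific variance profile'' and hence omits it). The base case, the gluing of the ordered tuples $(T^{(s)}_k)$ from the multinomial expansion into a single planted tree, the bookkeeping of depths in $\Gamma(\cdot,\balpha,t)$ via the shift $t+1-|u|$, and the verification that $W_{ii}=0$ together with the range $l\neq j$ enforce the non-backtracking constraint at the root all match the cited argument.
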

This lemma is a structural lemma which proof is not impacted by our specific 
variance profile. 

The following notions will be needed below. Define the set 
\[
\cK = \left\{ \{i,j\} \subset [n], \ s_{ij} > 0 \right\} . 
\]
For each $i\in [n]$, we define the section 
\[
\cK_i = \left\{ j \in [n], \ s_{ij} > 0 \right\} . 
\]
The next proposition shows that in the large dimensional regime, the joint
moments of the elements of a vector $\bz^t_i$ issued from the non-backtracking
iterations depend for large $n$ only on the first two moments of the elements
of $W$.  
\begin{proposition}
[adaptation of Proposition 1 of \cite{bay-lel-mon-15}]
\label{z-tz} 
Let $\{\tX_{ij}\}_{1 \leq i < j \leq n}$ be a family of independent random
variables satisfying Assumption~\ref{ass-X}, with distributions not 
necessarily identical to their analogues $X_{ij}$. Let $\tW$ be the symmetric 
matrix constructed similarly to $W$, but with the $X_{ij}$ replaced with the 
$\tX_{ij}$.
Starting with the set of $\RR^q$--valued vectors $\{ \tbz^0_{i\to j}, \ i,j \in
[n], \ i\neq j \}$ given as $\tbz^0_{i\to j} = \bcx^0_i$, define the vectors
$\tbz_i^t \in \RR^q$ by the recursion~\eqref{zij(r)} and the
equation~\eqref{zi(r)}, where $W$ is replaced with $\tW$.  Then, for each
$t\geq 1$ and each $\bm \in \NN^q$, there exists $C$ such that for each
$i\in[n]$, 
\[
\left| \EE (\bz^t_i)^\bm - \EE (\tbz^t_i)^\bm \right| \leq C K^{-1/2}  .
\]
\end{proposition}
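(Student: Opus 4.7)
The plan is to use Lemma~\ref{ztree} to expand
\[
\EE(\bz^t_i)^{\bm} = \sum_{\underline T} \EE W(\underline T)\, \Gamma(\underline T,\balpha,t)\, \cx(\underline T),
\]
where $\underline T = (T_1,\ldots,T_M)$ with $M = m(1)+\cdots+m(q)$ is an $M$-tuple of labeled, marked, non-backtracking trees, each drawn from the appropriate class $\cT^t_i(r)$ dictated by $\bm$. By independence of the $X_{kl}$'s, $\EE W(\underline T)$ factorizes as $\prod_{\{k,l\}} \EE W_{kl}^{n_{kl}(\underline T)}$, where the product runs over the distinct unordered edges $\{k,l\}$ of the forest $\underline T$ and $n_{kl}(\underline T)$ is the total multiplicity. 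The analogous expansion holds for $\EE(\tbz^t_i)^{\bm}$ with $\tW$ replacing $W$.

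Because $X_{kl}$ and $\tX_{kl}$ share the moments of orders $0$, $1$, and $2$, the contributions to the two expansions coincide whenever every distinct edge of $\underline T$ has multiplicity exactly $2$, while terms featuring any edge of multiplicity $1$ vanish. Therefore
\[
\bigl|\EE(\bz^t_i)^{\bm} - \EE(\tbz^t_i)^{\bm}\bigr| \;\leq\; 2 \sum_{\underline T \in \cB}
 \bigl|\Gamma(\underline T,\balpha,t)\, \cx(\underline T)\bigr| \prod_{\{k,l\}} \bigl|\EE W_{kl}^{n_{kl}(\underline T)}\bigr|,
\]
where $\cB$ collects the tuples for which every distinct edge has multiplicity $\geq 2$ and at least one has multiplicity $\geq 3$. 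The prefactor $|\Gamma\,\cx|$ is uniformly bounded by a constant depending only on $t, q, d, M$, thanks to \eqref{alpha-vec} and \eqref{bnd-x0}.

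The remaining step is a counting argument along the lines of \cite{bay-lel-mon-15}, adapted to the sparse variance profile. From Assumption~\ref{ass-X} together with $s_{kl} \leq C/K$ of Assumption~\ref{ass-A}, one has $|\EE W_{kl}^m| \leq C(m)\, K^{-m/2}$ for every $m \geq 2$. Group the tuples $\underline T \in \cB$ by their \emph{shape} (the forest stripped of its type labels $\ell(v)\in[n]$) and by the \emph{merging pattern} recording which vertices across the $M$ trees carry equal types; the number of such equivalence classes is bounded by a constant depending only on $t, d, M$. Given such a class with quotient graph of $e$ distinct edges and total multiplicity $N$, the remaining choice of labels is a walk in the graph on $[n]$ with edge set $\{\{k,l\} : s_{kl} > 0\}$; the bound $|\{l : s_{\ell l} > 0\}| \leq CK$ from Assumption~\ref{ass-A} yields at most $(CK)^e$ such labelings, and combined with the edge factor $\prod_{\{k,l\}} K^{-n_{kl}/2}$, the contribution of the class is $O(K^{e - N/2})$. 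For $\underline T \in \cB$ the inequality $N \geq 2e + 1$ holds, which gives the desired $O(K^{-1/2})$ estimate after summing over the finitely many classes.

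The principal obstacle is the combinatorial bookkeeping: one must organize the sum over tuples $\underline T$ by shape and merging pattern so as to reflect both the non-backtracking constraint built into $\cT^t_i(r)$ and the possible identifications of vertices across distinct trees, and then verify that the number of independent labeling choices really is the number $e$ of distinct quotient edges. Once this is in place, the estimate is a direct adaptation of the combinatorics of \cite{bay-lel-mon-15}, with the dense variance $1/n$ replaced throughout by $s_{kl} \leq C/K$ and the universal vertex count $n$ replaced by the sparse neighborhood count $CK$.
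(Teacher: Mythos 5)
Your proposal is correct and follows essentially the same route as the paper's proof: expand via Lemma~\ref{ztree}, observe that tuples with all edge multiplicities in $\{0,2\}$ contribute identically for $W$ and $\tW$ while multiplicity-$1$ edges vanish, bound the residual (tuples with some edge of multiplicity $\geq 3$) by the moment estimate $|\EE W_{kl}^m| \leq C K^{-m/2}$ together with a count of admissible labelings of the quotient graph $\leq (CK)^{|E(G)|}$ (the paper uses the slightly tighter $\leq C K^{|V(G)|-1}$ via a spanning tree plus leaf-pruning, but both suffice since $|V(G)|-1 \leq |E(G)|$ for connected $G$), and conclude from $N \geq 2|E(G)|+1$ that the residual is $O(K^{-1/2})$.
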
 
\begin{proof}
For simplicity, we restrict the proof to the case where the multi-index $\bm$
satisfies $m(r) = m$ for some integer $m > 0$ and $m(s) = 0$ for 
$s \in [q]\setminus \{ r \}$. By Lemma~\ref{ztree}, we have 
\[
\EE (z^t_i(r))^m = \sum_{T_1,\ldots, T_m \in \cT^t_{i}(r)} 
 \left( \prod_{k=1}^m \Gamma(T_k,\balpha, t) \right) 
 \EE \left[ \prod_{k=1}^m W(T_k) \right] 
  \prod_{k=1}^m x(T_k) . 
\]
For a tree $T$ and $j,l \in [n]$, define 
\[
\phi(T)_{j l} = \left| \left\{ (u\to v) \in E(T), \ \{ \ell(u),\ell(v) \} = 
  \{j, l\} \right\} \right| .  
\]
There is an integer constant $C_E = C_E(d,t,m)$ that bounds the total number
of edges in the trees $T_1,\ldots, T_m \in \cT^t_{i}(r)$, leading to  
\[
\sum_{k\in[m]} \sum_{j<l} \phi(T_k)_{jl} \leq C_E . 
\]
Given an integer $\frak m \in [C_E]$, let 
\begin{align*}
\cA_i(\frak m) = \Bigl\{ &  (T_1,\ldots, T_m) \, : \, T_k \in \cT^t_{i}(r) \  
  \text{for all} \ k \in [m] ,  \\  
 &\forall j < l, \ \sum_{k\in[m]} \phi(T_k)_{jl} \neq 1 , \\ 
 &\forall j < l, \ \sum_{k\in[m]} \phi(T_k)_{jl} > 0 \ \Rightarrow \{j,l\} \in 
 \cK,  \\ 
 &\sum_{k\in[m]} \sum_{j<l} \phi(T_k)_{jl} = \frak m \Bigr\}. 
\end{align*} 
Then, since the elements of $W$ beneath the diagonal are centered and 
independent, 
\begin{equation}
\label{ztm} 
\EE z^t_i(r)^m = \sum_{\frak m=1}^{C_E} \sum_{(T_1,\ldots, T_m) \in \cA_i(\frak m)}  
 \left( \prod_{k=1}^m \Gamma(T_k,\balpha, t) \right) 
 \left( \prod_{k=1}^m x(T_k)  \right) 
 \EE \left[ \prod_{k=1}^m W(T_k) \right] . 
\end{equation} 
Notice that the contributions of the $m$--uples of trees in 
$\cA_i(\frak m)$ for which 
\[
\forall j < l, \ 
 \sum_{k\in[m]} \phi(T_k)_{jl} \in \{ 0, 2 \} 
\]
are the same for $\EE z^t_i(r)^m$ and $\EE \tilde z^t_i(r)^m$ by the 
assumptions on the matrices $W$ and $\tW$. Thus, defining the set 
\[
\widecheck\cA_i(\frak m) = \Bigl\{ (T_1,\ldots, T_m) \in \cA_i(\frak m) \ : \ 
 \exists j < l, \ \sum_{k\in[m]} \phi(T_k)_{jl} \geq 3 \Bigr\}, 
\]
the proposition will be proven if we prove that for all $\frak m\in [C_E]$, the
real number 
\[
\xi_{\frak m} = 
 \sum_{(T_1,\ldots, T_m) \in \widecheck\cA_i({\frak m})}  
 \left( \prod_{k=1}^m \Gamma(T_k,\balpha, t) \right) 
 \left( \prod_{k=1}^m x(T_k)  \right) 
 \EE \left[ \prod_{k=1}^m W(T_k) \right] 
\]
satisfies $|\xi_{\frak m} | \leq C K^{-1/2}$. 

Using the bounds~\eqref{alpha-vec} and~\eqref{bnd-x0} provided in the 
statement of Proposition~\ref{amp-vec}, it is obvious that 
$\prod_{k=1}^m \Gamma(T_k,\balpha, t)$ and $\prod_{k=1}^m x(T_k)$ are bounded. 

Since $| \EE W_{jl}^s | \leq C K^{-s/2}$ for each integer $s > 0$ by
Assumptions~\ref{ass-X} and~\ref{ass-A}, for each $(T_1,\ldots, T_m) \in
\widecheck\cA_i({\frak m})$, we have 
\[
\left| \EE \prod_{k=1}^m W(T_k) \right| = 
 \prod_{j<l} \Bigl| \EE W_{jl}^{\sum_k \phi(T_k)_{jl}} \Bigr| 
 \leq C K^{-{\frak m} / 2}. 
\]
To complete the proof, we shall show that 
\begin{equation}
\label{x(T)}
 \left| \widecheck\cA_i({\frak m}) \right| \leq C K^{({\frak m} - 1) / 2} . 
\end{equation}

From a $m$--uple $(T_1,\ldots, T_m) \in \widecheck\cA_i({\frak m})$, let us
construct a graph $G = \bG(T_1,\ldots, T_m)$ as follows. The graph $G$ is the
rooted, undirected, labelled, and unmarked graph obtained by merging the $m$
trees $T_1,\ldots, T_m$ and by identifying the nodes that have the same type;
This common type will be the label of the obtained node in $G$.  The root node
of $G$ will be the node obtained by merging the roots of the trees $T_1,\ldots,
T_m$ (remember that they all have the same type $i$). The other nodes are
numbered, say, in the increasing order of their labels.  The edges of $G$ are
furthermore unweighted.  

The number of edges of $G$ is 
\[
 | E(G) | = \sum_{j<l} \1_{\sum_k \phi(T_k)_{jl} > 0}. 
\]
Remember that when $\sum_k \phi(T_k)_{jl} > 0$, this sum is $\geq 2$, and 
for some $j<l$, it is $\geq 3$. Consequently, 
$2 | E(G) | + 1 \leq \sum_{j<l} \sum_k \phi(T_k)_{jl}$, in other words, 
\[
 | E(G) | \leq \frac{{\frak m} - 1}{2}. 
\]
Note that since $G$ is connected as being obtained through the merger of
trees, $| V(G) | \leq | E(G) | +1$. Thus, $| \{ v \in G, v \neq \degree \} |
 \leq ({\frak m} - 1) / 2$. Also, by construction, $G$ satisfies the following 
property: 
\[
\{ u, v \} \in E(G) \ \Rightarrow \ \ell(u) \in \cK_{\ell(v)} .
\]
We shall denote as $\cG^{\frak m}_i$ the set of rooted, undirected, labelled and
connected graphs such that $\ell(\degree) = i$, $| E(G) | \leq \frac{{\frak m} -
1}{2}$, and the last property is satisfied. We denote as $\cR^{\frak m}$ the set of
all the elements of $\cG^{\frak m}_i$ but without the labels. Given a graph 
$G \in \cG^{\frak m}_i$, let us denote as $\bar G = \bU(G) \in \cR^{\frak m}$ the
unlabelled version of $G$.  With these notations, we have
\begin{equation}
\label{xcheck} 
 \left| \widecheck\cA_i({\frak m}) \right| = 
 \sum_{\bar G \in \cR^{\frak m}} \ 
 \sum_{\substack{G \in \cG^{\frak m}_i \, : \\\bU(G) = \bar G}} \ 
 \left| \left\{ 
 (T_1,\ldots, T_m) \in \widecheck\cA_i({\frak m}) \, : \, 
       \bG(T_1,\ldots, T_m)= G \right\} \right| . 
\end{equation} 
For each graph $G$, we have 
\begin{equation}
\label{TG} 
 \left| \left\{ (T_1,\ldots, T_m) \in \widecheck\cA_i({\frak m}) \ : \ 
       \bG(T_1,\ldots, T_m)= G \right\} \right| \leq C,
\end{equation} 
where $C = C(d,t,m)$ is independent of $G$. Our purpose is now to show that 
\begin{equation}
\label{GKmu} 
 \left| \left\{ G \in \cG^{\frak m}_i \, : \, \bU(G) = \bar G \right\} \right|  
  \leq C K^{({\frak m}-1) / 2}. 
\end{equation} 

Given $\bar G \in \cR^{\frak m}$, denote as $\degree$ the root node of 
$\bar G$, write $M = | V(\bar G) | - 1 \leq ({\frak m} - 1) / 2$, and write
$V(\bar G) \setminus \{\degree\} = [M]$.  Recalling that $\bar G$ is connected,
let us consider a spanning tree of this graph rooted in $\degree$. Denote as
$\bs\pi(v)$ the parent of the node $v$ in this tree. Writing $j_\degree = i$,
we obtain that 
\[ 
 \left| \left\{ G \in \cG^{\frak m}_i \, : \, \bU(G) = \bar G \right\} \right|  
 \leq \left|\left\{ (j_1,\ldots, j_M) \in [n]^M \, : \, 
 \forall k \in [M], \ j_k \in \cK_{j_{\bs\pi(k)}}  \right\} \right| . 
\] 
Denoting as $L \subset [M]$ the set of the leaves of the spanning tree, we can 
write 
\begin{align} 
\left|\left\{ (j_1,\ldots, j_M) \in [n]^M \, : \, 
 \forall k \in [M], \ j_k \in \cK_{j_{\bs\pi(k)}}  \right\} \right| 
 &= \sum_{\substack{j_1,\ldots, j_M \in [n] \, :  \\
 \forall k \in [M], \ j_k \in \cK_{j_{\bs\pi(k)}} }} 1 \nonumber \\ 
&= 
\sum_{k \in [M] \setminus L} \ 
  \sum_{j_k \in \cK_{j_{\bs\pi(k)}}}  
 \Bigl( \sum_{p \in L} \sum_{j_p \in \cK_{j_{\bs\pi(p)}}}  
   1 \Bigr)  \nonumber \\ 
 &\leq C K^{|L|} 
\sum_{k \in [M] \setminus L} \ \sum_{j_k \in \cK_{j_{\bs\pi(k)}}}  1 , 
\nonumber 
\end{align} 
recalling that $|\cK_j| \leq CK$ for all $j$ by Assumption~\ref{ass-A}, 
and using the inequality $|L| K \leq K^{|L|}$ for $K\geq 2$. 
If we prune the leaves of the original spanning tree, what remains is a tree
made of the nodes that constitute the first sum above plus the root node.  We
can apply the pruning operation to the new tree as above, and iterate until
exhausting all the set $[M] = V(\bar G) \setminus \{\degree\}$. This leads to 
\[
\left|\left\{ (j_1,\ldots, j_M) \in [n]^M \, : \, 
 \forall k \in [M], \ j_k \in \cK_{j_{\bs\pi(k)}}  \right\} \right| 
 \leq C K^{M} \leq C K^{({\frak m}-1) / 2}. 
\]
Inequality~\eqref{GKmu} follows. 
 
It is furthermore easy to check that 
\[
\left| \cR^{\frak m} \right| \leq C .
\]
Getting back to Inequality~\eqref{xcheck}, and using this last inequality along
with Inequalities~\eqref{GKmu} and~\eqref{TG}, we obtain
Inequality~\eqref{x(T)}, and the proposition is established. 
\end{proof}

Let us keep the notations of the former proof. Consider the subset 
$\widetilde\cA_i({\frak m})$ of $\cA_i({\frak m})$ defined as 
\[
\widetilde\cA_i({\frak m}) = \Bigl\{ (T_1,\ldots, T_m) \in \cA_i({\frak m}) \ : \ 
 \forall j < l, \ \sum_{k\in[m]} \phi(T_k)_{jl} \in \{0,2\} \Bigr\}. 
\]
For $(T_1,\ldots, T_m) \in \widetilde\cA_i({\frak m})$, let us denote as $G =
\bG(T_1,\ldots, T_m) \in \cG^{\frak m}_i$ the graph obtained by merging these trees,
and $\bar G = \bU(G) \in \cR^{\frak m}$ the unlabelled version of $G$, as we did for
$(T_1,\ldots, T_m) \in \widecheck\cA_i({\frak m})$. As we did in~\eqref{ztm}, 
$\EE z_i^t(r)^m$ can be written as 
\begin{equation}
\label{Ezm} 
\EE z^t_i(r)^m = 
 \sum_{{\frak m}=1}^{C_E} \chi_{\frak m} +  
  \sum_{{\frak m}=1}^{C_E} \xi_{\frak m} ,  
\end{equation} 
where 
\begin{align*} 
\chi_{\frak m} &= 
 \sum_{(T_1,\ldots, T_m) \in \widetilde\cA_i({\frak m})} 
    \left( \prod_{k=1}^m \Gamma(T_k,\balpha, t) \right) 
    \left( \prod_{k=1}^m x(T_k) \right)  
    \EE \left[ \prod_{k=1}^m W(T_k) \right] \\ 
&= \sum_{\bar G \in \cR^{\frak m}} \ 
 \sum_{\substack{G \in \cG^{\frak m}_i \, : \\ \bU(G) = \bar G}} \ 
 \sum_{\substack{(T_1,\ldots, T_m) \in \widetilde\cA_i({\frak m}) \, : \\
       \bG(T_1,\ldots, T_m)= G}} 
    \left( \prod_{k=1}^m \Gamma(T_k,\balpha, t) \right) 
    \left( \prod_{k=1}^m x(T_k) \right)  
    \EE \left[ \prod_{k=1}^m W(T_k) \right],  
\end{align*} 
and where we recall from the former proof that 
$\xi_{\frak m} = \sum_{(T_1,\ldots, T_m) \in \widecheck\cA_i({\frak m})} \cdots$
satisfies $|\xi_{\frak m} | \leq CK^{-1/2}$. 

Let us further decompose $\chi_{\frak m}$ as 
\begin{align*}
\chi_{\frak m} &= 
 \sum_{\substack{\bar G \in \cR^{\frak m} \, : \\
  \bar G \text{ is a tree}}} \ 
 \sum_{\substack{G \in \cG^{\frak m}_i \, : \\ \bU(G) = \bar G}} \ 
 \sum_{\substack{(T_1,\ldots, T_m) \in \widetilde\cA_i({\frak m}) \, : \\
       \bG(T_1,\ldots, T_m)= G}} \cdots 
 \quad + \quad 
 \sum_{\substack{\bar G \in \cR^{\frak m} \, : \\
  \bar G \text{ not a tree}}} \ 
 \sum_{\substack{G \in \cG^{\frak m}_i \, : \\ \bU(G) = \bar G}}  \ 
 \sum_{\substack{(T_1,\ldots, T_m) \in \widetilde\cA_i({\frak m}) \, : \\
       \bG(T_1,\ldots, T_m)= G}} \cdots \\ 
&= \chi^{\text{T}}_{\frak m} + \chi^{\text{NT}}_{\frak m} . 
\end{align*}
Then, the contribution of the term $\chi^{\text{NT}}_{\frak m}$ is negligible: 
\begin{lemma}[see Lemma~2 of \cite{bay-lel-mon-15}]
\label{Gtree}
$\left| \chi^{\text{T}}_{\frak m} \right| \leq C$, and 
$\left| \chi^{\text{NT}}_{\frak m} \right| \leq C / K$. 
\end{lemma}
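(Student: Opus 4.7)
The plan is to exploit the crucial structural feature of $\widetilde\cA_i({\frak m})$: each edge of the merged graph $G = \bG(T_1,\ldots,T_m)$ is traversed exactly $0$ or $2$ times across the $m$ trees. Consequently $\frak m$ must be even (otherwise $\chi_{\frak m}=0$), and $|E(G)| = \frak m/2$. The quantities $\prod_k \Gamma(T_k,\balpha,t)$ and $\prod_k \cx(T_k)$ are each bounded by a constant depending only on $d,t,m$, thanks to the hypotheses~\eqref{alpha-vec} and~\eqref{bnd-x0}. Moreover, since each edge of $G$ carries multiplicity $2$, independence and centering of the entries of $W$ yield
\[
\left| \EE\prod_{k=1}^m W(T_k) \right| = \prod_{\{j,l\} \in E(G)} \EE W_{jl}^2 = \prod_{\{j,l\} \in E(G)} s_{jl} \leq (C/K)^{{\frak m}/2}
\]
by Assumption~\ref{ass-A}. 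Finally, for each fixed $G$, the number of $m$-uples $(T_1,\ldots,T_m)\in\widetilde\cA_i({\frak m})$ with $\bG(T_1,\ldots,T_m) = G$ is bounded by a constant $C(d,t,m)$ independent of $G$, and $|\cR^{\frak m}|$ is likewise bounded by a constant. So everything reduces to counting labelled graphs $G$ with $\bU(G) = \bar G$.

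The next step is to count these labelled graphs as a function of the combinatorial shape $\bar G$. I would rerun the pruning argument from the proof of Proposition~\ref{z-tz}. Fix a spanning tree of $\bar G$ rooted at $\degree$; labelling the non-root vertices consistently with the edges of $\bar G$ forces $j_k \in \cK_{j_{\bs\pi(v_k)}}$ for every non-root node, and iteratively pruning the leaves and invoking $|\cK_j|\leq CK$ shows that
\[
\left| \{ G \in \cG^{\frak m}_i \, : \, \bU(G) = \bar G \} \right| \leq C\, K^{\,|V(\bar G)|-1}.
\]

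Now split according to whether $\bar G$ is a tree. If $\bar G$ is a tree then $|V(\bar G)|-1 = |E(\bar G)| = {\frak m}/2$, so the labelled-graph count is at most $C K^{{\frak m}/2}$, and combining with the bound $(C/K)^{{\frak m}/2}$ on the moment factor and the $O(1)$ bounds on the remaining factors yields
\[
|\chi^{\text{T}}_{\frak m}| \leq C.
\]
If $\bar G$ is not a tree, then (being connected) it contains a cycle, so $|V(\bar G)| \leq |E(\bar G)| = {\frak m}/2$, giving the labelled-graph count $C K^{{\frak m}/2 - 1}$. The same combination now produces the extra factor $1/K$:
\[
|\chi^{\text{NT}}_{\frak m}| \leq C/K.
\]

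The main obstacle is the graph-counting step under the sparse variance profile: unlike in the dense Wigner case of \cite{bay-lel-mon-15}, the labels cannot be chosen freely from $[n]$ at each vertex; one must track the constraint $j_k \in \cK_{j_{\bs\pi(v_k)}}$ carefully through the pruning, and verify that the argument recycled from Proposition~\ref{z-tz} gives sharp powers of $K$ (rather than of $n$) for both the tree and the non-tree cases. Once this is done, the dichotomy between $|V(\bar G)|=|E(\bar G)|+1$ (trees) and $|V(\bar G)|\leq |E(\bar G)|$ (non-trees) directly produces the advertised gain of one factor of $K$, completing the lemma.
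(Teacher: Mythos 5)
Your proposal is correct and follows essentially the same route as the paper. Both arguments bound $\prod_k \Gamma(T_k,\balpha,t)$ and $\prod_k \cx(T_k)$ by constants, observe that each active edge of $G$ has multiplicity exactly $2$ so that $|E(G)| = \frak m/2$ and the moment factor is $O(K^{-\frak m/2})$, count the labelled graphs over a given unlabelled shape $\bar G$ via the spanning-tree pruning argument of Proposition~\ref{z-tz} to obtain the bound $CK^{|V(\bar G)|-1}$, and exploit the dichotomy $|V(\bar G)| = |E(\bar G)|+1$ (tree) versus $|V(\bar G)| \leq |E(\bar G)|$ (non-tree) to extract the extra factor of $1/K$ in the non-tree case.
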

\begin{proof}
By repeating the argument of the former proof, the terms 
$\prod_{k=1}^m \Gamma(T_k,\balpha, t)$ and $\prod_{k=1}^m x(T_k)$ are both 
bounded, and the term $\EE \prod_{k=1}^m W(T_k)$ accounts for a factor of 
order $K^{-{\frak m}/2}$ in both $\chi^{\text{T}}_{\frak m}$ and $\chi^{\text{NT}}_{\frak m}$. 

Furthermore, when $(T_1,\ldots, T_m) \in \widetilde\cA_i({\frak m})$, the graph $G =
\bG(T_1,\ldots, T_m)$ satisfies $| E(G) | = {\frak m} / 2$. We further know that
$\bar G = \bU(G)$ satisfies $|V(\bar G)| \leq |E(\bar G)| +1 = |E(G)| + 1$,
with equality if and only if $G$ is a tree. Therefore, when $\bar G \in
\cR^{\frak m}$ is a tree, $|V(\bar G)| = {\frak m} / 2 + 1$. Once we set $M = | V(G)| - 1 =
{\frak m} / 2$, the argument for establishing Inequality~\eqref{GKmu} in the former 
proof can be reproduced word for word here to show that 
$|\{ G \in \cG^{\frak m}_i \, : \, \bU(G) = \bar G \}| \leq C K^{{\frak m}/2}$. 
When $\bar G \in \cR^{\frak m}$ is not a tree, $|V(\bar G)| \leq {\frak m} / 2$, and we
obtain that 
$|\{ G \in \cG^{\frak m}_i \, : \, \bU(G) = \bar G \}| \leq C K^{{\frak m}/2-1}$. Thus, 
\begin{align*}
\left| \chi^{\text{T}}_{\frak m} \right| &\leq C K^{-{\frak m}/2} K^{{\frak m}/2} = C, \\
\left| \chi^{\text{NT}}_{\frak m} \right| &\leq C K^{-{\frak m}/2} K^{{\frak m}/2-1} = C
 K^{-1},  
\end{align*} 
and the lemma is proven. 
\end{proof} 

Getting back to the expression~\eqref{Ezm} of $\EE z^t_i(r)^m$ and using this
lemma along with the bound $|\xi_{\frak m}| \leq C K^{-1/2}$, we obtain that for 
each $t > 0$ and each multi-index $\bm\in\NN^q$, there exists a constant 
$C > 0$ such that 
\begin{equation}  
\label{Ezbnd} 
\max_{i\in[n]} \EE | (\bz^t_i)^\bm | \leq C .
\end{equation} 
This bound will be needed below. 

Recall that the samples $\by^t_i$ are obtained by drawing an independent matrix
$W^t$ at each iteration in the parameter $t$. We have: 
\begin{proposition}[Proposition 2 of \cite{bay-lel-mon-15}]
\label{z-y}
For each $t\geq 1$ and each $\bm \in \NN^q$, there exists $C$ such that
for each $i\in[n]$, 
\[
\left| \EE (\bz^t_i)^\bm - \EE (\by^t_i)^\bm \right| \leq C K^{-1/2}  .
\]
\end{proposition}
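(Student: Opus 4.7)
The argument parallels the combinatorial machinery of Proposition~\ref{z-tz}. My first step would be to establish an analog of Lemma~\ref{ztree} for $\by^t_i$: the same family $\cT^t_i(r)$ of non-backtracking labelled trees indexes the expansion, but the weight of an edge $(u\to v)$ at height $|u|$ in the tree is now $W^{t-|u|-1}_{\ell(u)\ell(v)}$ instead of $W_{\ell(u)\ell(v)}$, reflecting the fact that a fresh matrix $W^s$ is drawn at each iteration. Taking the $\bm$-th moment and expanding as in~\eqref{ztm} then yields, for $\EE(\by^t_i)^\bm$, the same sum over $m$--tuples of trees weighted by $\prod_k \Gamma(T_k,\balpha,t) \prod_k \cx(T_k)$, with the matrix-moment factor replaced by the joint moment of time-indexed entries.

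The decisive point is that independence of $\{W^s\}_{s\geq 0}$ makes the matrix expectation for $\by^t_i$ factorize across depth slices. Writing $\phi_s(T_k)_{jl}$ for the number of edges at depth $t-s-1$ in $T_k$ with endpoint types $\{j,l\}$, one obtains
\[
\EE \prod_{k=1}^m W_{\by}(T_k) \;=\;
 \prod_{s=0}^{t-1} \ \prod_{j<l}
  \EE \bigl( W_{jl} \bigr)^{\sum_k \phi_s(T_k)_{jl}} ,
\]
whereas for $\bz^t_i$ only the \emph{total} multiplicities $\sum_k \phi(T_k)_{jl} = \sum_s \sum_k \phi_s(T_k)_{jl}$ appear in the corresponding product. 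Since $W$ is centered, the $\by$-expectation vanishes as soon as some edge $\{j,l\}$ has total multiplicity one within some slice $s$, while the $\bz$-expectation only requires total multiplicity at least two. Hence the tuples that contribute differently to the two moments are precisely those in which the total multiplicity of some edge of $G = \bG(T_1,\ldots,T_m)$ is split across at least two depth slices.

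The remainder of the plan is to estimate these cross-slice contributions by re-running the bookkeeping of Proposition~\ref{z-tz} refined by depth. The tuples in $\widecheck\cA_i(\frak m)$ (some edge of $G$ has multiplicity $\geq 3$) already contribute $O(K^{-1/2})$ to $\EE(\bz^t_i)^\bm$ by that proposition, and the same estimate transfers verbatim to $\EE(\by^t_i)^\bm$; the non-tree part $\chi^{\mathrm{NT}}_{\frak m}$ of Lemma~\ref{Gtree} contributes $O(K^{-1})$ in both expansions. The part that needs work is the tree subregime $\chi^{\mathrm{T}}_{\frak m}$ in which $\bar G$ is a tree and every edge of $\bar G$ is doubled. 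Here the required structural fact is that, under the non-backtracking constraint, if $\bar G$ is a tree then the two preimages in $\bigsqcup_k E(T_k)$ of each edge of $\bar G$ must lie at the same depth in their respective trees. This is the content of Proposition~2 of \cite{bay-lel-mon-15}, and its proof is a purely combinatorial walk argument that does not interact with the variance profile; I would reproduce it as in that reference. Granted this, the tree contributions to $\EE(\bz^t_i)^\bm$ and $\EE(\by^t_i)^\bm$ agree exactly, and the overall difference is bounded by the $O(K^{-1/2})$ estimate coming from the $\widecheck\cA_i(\frak m)$ and non-tree pieces, yielding the claim.

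The main obstacle is therefore the structural lemma asserting depth-matching of paired edges in the tree subregime of $\widetilde\cA_i(\frak m)$; once this is imported from \cite{bay-lel-mon-15}, the bookkeeping is a straightforward adaptation of the counting already carried out in the proof of Proposition~\ref{z-tz}, with the additional sum over depth assignments contributing only a bounded $K$-independent combinatorial factor absorbed into the constant~$C$.
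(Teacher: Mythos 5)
Your proposal follows the paper's proof almost exactly: you introduce the time-indexed tree expansion for $\by^t_i$, partition the $m$--tuples of trees into the high-multiplicity set $\widecheck\cA_i$, the non-tree part of $\widetilde\cA_i$, and the tree part of $\widetilde\cA_i$, bound the first two by $O(K^{-1/2})$ and $O(K^{-1})$ exactly as in Proposition~\ref{z-tz} and Lemma~\ref{Gtree}, and reduce the tree contribution to the structural claim that for a tree $\bar G$ each doubled edge has both preimages at equal depth (equivalently, a failed depth match forces a cycle in $G$), which both you and the paper take over from \cite{bay-lel-mon-15}. The only quibble is that the weight on the edge $(u\to v)$ should be $W^{t-|u|}_{\ell(u),\ell(v)}$ rather than $W^{t-|u|-1}_{\ell(u),\ell(v)}$, a harmless off-by-one in the time index.
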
 
\begin{proof}
Parallelling the quantities $W(T)$ introduced above for a tree $T \in 
 \overline\cT^t$, \cite{bay-lel-mon-15} introduced the quantities 
\[
\barW(T,t) = \prod_{(u\to v) \in E(T)} W^{t-|u|}_{\ell(u),\ell(v)} . 
\]
By an easy adaptation of the proof of \cite[Lm.~1]{bay-lel-mon-15} 
(Lemma~\ref{ztree} above), we can show that 
\begin{align}
y^t_{i\to j}(r) &= \sum_{T\in \cT^t_{i\to j}(r)} \barW(T) \Gamma(T,\balpha, t) 
 x(T), \nonumber  \\
y^t_{i}(r) &= \sum_{T\in \cT^t_{i}(r)} \barW(T) \Gamma(T,\balpha, t) x(T) . 
  \nonumber 
\end{align}
Similarly to the proof of Proposition~\ref{z-tz}, we assume that $m(r) = m$
and $m(s) = 0$ for $s\in[q]\setminus \{ r \}$. 
Similarly to Equation~\eqref{ztm}, and with the same notations, it holds that
\[
\EE y^t_i(r)^m = \sum_{{\frak m}=1}^{C_E} \sum_{(T_1,\ldots, T_m) \in \cA_i({\frak m})}  
 \left( \prod_{k=1}^m \Gamma(T_k,\balpha, t) \right) 
 \left( \prod_{k=1}^m x(T_k) \right)  
 \EE \left[ \prod_{k=1}^m \barW(T_k) \right] . 
\]
As we did for $\EE z^t_i(r)^m$ above, we partition $\cA_i({\frak m})$ as 
$\cA_i({\frak m}) = \widetilde\cA_i({\frak m}) \cup \widecheck\cA_i({\frak m})$. We prove with
the same arguments that the contribution of $\widecheck\cA_i({\frak m})$ is
of order $K^{-1/2}$. Furthermore, parallelling Lemma~\ref{Gtree}, we also 
obtain that within $\widetilde\cA_i({\frak m})$ we can limit ourselves to the terms
$\bar\chi^{\text{T}}_{\frak m}$ defined as 
\[
\bar\chi^{\text{T}}_{\frak m} = 
 \sum_{\substack{\bar G \in \cR^{\frak m} \, : \\
  \bar G \text{ is a tree}}} \ 
 \sum_{\substack{G \in \cG^{\frak m}_i \, : \\ \bU(G) = \bar G}} \ 
 \sum_{\substack{(T_1,\ldots, T_m) \in \widetilde\cA_i({\frak m}) \, : \\
       \bG(T_1,\ldots, T_m)= G}} 
    \left( \prod_{k=1}^m \Gamma(T_k,\balpha, t) \right) 
    \left( \prod_{k=1}^m x(T_k) \right)   
    \EE \left[ \prod_{k=1}^m \barW(T_k) \right],  
\]
the terms for which $\bar G$ is not a tree being of order $K^{-1}$. With this
at hand, the proposition will be established once we show that
$\bar\chi^{\text{T}}_{\frak m} = \chi^{\text{T}}_{\frak m}$, where we recall that
$\chi^{\text{T}}_{\frak m}$, introduced in the last proof, has the same expression as
$\bar\chi^{\text{T}}_{\frak m}$ except that the terms $\barW(T_k)$ in the latter are
replaced with $W(T_k)$. 

Consider an arbitrary $m$--uple $(T_1,\ldots, T_m)$ in the inner sum above. We
first notice that if $\EE \prod_{k=1}^m \barW(T_k) \neq 0$, then $\EE
\prod_{k=1}^m \barW(T_k) = \EE \prod_{k=1}^m W(T_k)$.  This is due to the fact
that if $j < l$ is active in $\EE \prod_{k=1}^m \barW(T_k)$ (in the sense that
$\sum_k \phi(T_k)_{jl} \neq 0$, and thus, is equal to $2$), then the
corresponding contribution of this $j < l$ to $\EE \prod_{k=1}^m W(T_k)$ will
be exactly the same. 

The proof will then be terminated if we show that if $\EE \prod_{k=1}^m W(T_k)
\neq 0$ and $\EE \prod_{k=1}^m \barW(T_k) = 0$, then necessarily, the graph $G
= \bU(T_1,\ldots, T_m)$ will not be a tree, \emph{i.e.}, it will contain a
cycle.  Assume that $\EE \prod_{k=1}^m W(T_k) \neq 0$ and $\EE \prod_{k=1}^m
\barW(T_k) = 0$. Then, there will be an edge $\{u , v\}$ in $G$, with $\{
\ell(u), \ell(v) \} = \{ j, l \}$, but that will appear at two different
distances to $\circ$ in the trees $T_k$. Let us consider the three possible
cases where this could happen: 
\begin{enumerate}
\item This happens in the same tree, say $T_1$, and on the same path to
$\degree$. Then, due to the non-backtracking nature of $T_1$, a cycle appears
in $G$. 

\item This happens in two different trees, say $T_1$ and $T_2$.  Namely, there
exists two edges $u\to v \in E(T_1)$ and $u'\to v' \in E(T_2)$ such that $\{
\ell(u), \ell(v) \} = \{ \ell(u'), \ell(v') \}$, and $|u| \neq |u'|$.  Then,
keeping in mind the backtracking property, it is easy to observe that a cycle
is created in $G$. 

\item A similar remark can be made when this happens in the same tree but on
two different paths to the root. 
\end{enumerate}  
Thus, we have a contradiction in the three cases, and we get that 
$\bar\chi^{\text{T}}_{\frak m} = \chi^{\text{T}}_{\frak m}$. The proposition is 
proven. 
\end{proof} 

The following proposition links the joint moments of the elements of the
vectors $\bz^t_i$ with those of the vectors $\bx^t_i$ provided by the AMP
algorithm~\eqref{ampol}. 
\begin{proposition}[proposition 3 of \cite{bay-lel-mon-15}] 
\label{z-x}
For each $t\geq 1$ and each $\bm \in \NN^q$, there exists $C$ such that
for each $i\in[n]$, 
\[
\left| \EE (\bz^t_i)^\bm - \EE (\bcx^t_i)^\bm \right| \leq C K^{-1/2}  .
\]
\end{proposition}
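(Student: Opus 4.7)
Following the approach of Bayati--Lelarge--Montanari adapted to the sparse variance profile, the goal is to compute $\EE (\bcx^t_i)^\bm$ via a tree expansion and show that the Onsager term in~\eqref{ampol} is precisely designed to cancel the backtracking contributions, leaving only the non-backtracking tree contributions that reproduce $\EE (\bz^t_i)^\bm$. The argument proceeds by induction on $t$, with the induction hypothesis including the moment bounds $\sup_n \max_i \EE |(\bcx^s_i)^\bm| < \infty$ for all $s < t$ and all multi-indices.

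First, I would expand $\bcx^t_i$ iteratively. Without the Onsager term, the recursion produces, after $t$ iterations, a sum over labelled trees in a broader family $\overline\cT^t_i(r)$ (with backtracking permitted), in direct analogy with Lemma~\ref{ztree}. The Onsager part contributes a separate sum, where each term carries a factor $(W_{il})^2$ together with a monomial built from $\bcx^{t-1}$. The plan is to establish an identity of the form
\[
\cx^t_i(r) \;=\; \sum_{T\in\cT^t_i(r)} W(T)\, \Gamma(T,\balpha,t)\, \cx(T) \;+\; \cE^t_i(r),
\]
where the residual $\cE^t_i(r)$ collects the backtracking-tree contributions from the first part of the recursion together with the Onsager correction. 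The key algebraic observation is that within each backtracking pair $u \to v \to w$ with $\ell(u) = \ell(w)$, the factor $W_{\ell(u)\ell(v)}W_{\ell(v)\ell(w)} = (W_{\ell(u)\ell(v)})^2$ matches exactly the shape of the Onsager term; once its mean $s_{il}$ is substituted, an exact cancellation occurs, so $\cE^t_i(r)$ becomes a sum of terms each carrying a fluctuation factor $(W_{il})^2 - s_{il}$, plus higher-order pieces generated by ``double-backtracking'' paths.

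Second, I would compute $\EE (\bcx^t_i)^\bm$ by the same pairing-of-edges argument used for $\EE (\bz^t_i)^\bm$ in the proof of Proposition~\ref{z-tz}. The tree contributions reproduce $\EE (\bz^t_i)^\bm$ exactly. The residuals from $\cE^t_i(r)$ split into: (i) non-tree pairings, contributing $O(K^{-1})$ by the same graph-counting as in Lemma~\ref{Gtree}; and (ii) terms containing $(W_{il})^2 - s_{il}$, which are centered and independent across edges, and hence contribute $O(K^{-1/2})$ via a second-moment computation using $\EE((W_{il})^2 - s_{il})^2 \leq C s_{il}^2 \leq CK^{-2}$ together with $|\{l : s_{il} > 0\}| \leq CK$ from Assumption~\ref{ass-A}. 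Summing these two sources of error yields the claimed $CK^{-1/2}$ bound.

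The main obstacle is the combinatorial bookkeeping of the Onsager/backtracking cancellation in the sparse-profile setting. In particular, one must show that the higher-order pieces generated by iterating the recursion (multi-step backtracking, mixed backtracking/Onsager combinations across different time indices) reassemble into contributions controllable by the induction hypothesis, rather than proliferating out of control with $t$. The polynomial-coefficient bound~\eqref{alpha-vec}, the degree bound $d$, the moment bounds on the entries of $X$ from Assumption~\ref{ass-X}, and the previously established moment bound~\eqref{Ezbnd} for $\bz^t_i$ together ensure that each such residual is dominated by $K^{-1/2}$ times a constant depending only on $t, m, d$, which closes the induction.
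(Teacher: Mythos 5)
Your high-level plan --- expand $\cx^t_i(r)$ into a tree sum plus a residual, identify the tree sum with $z^t_i(r)$, and bound the residual's contribution to the mixed moments --- has the right shape and matches the general strategy. However, the cancellation mechanism you describe does not match algorithm~\eqref{ampol}. The Onsager term there is $\sum_l (W_{il})^2\, \partial f_r/\partial x(s)(\bcx^t_l,l,t)$, so when it is cancelled against the length-$2$ backtracking paths in the tree expansion (which also contribute factors $W_{il}^2$), the cancellation is exact and \emph{pathwise}: no mean $s_{il}$ is substituted anywhere, and no fluctuation factor $(W_{il})^2 - s_{il}$ is left behind. The precise outcome is the deterministic algebraic identity of Lemma~\ref{xtree}, namely $\cx^t_i(r) = z^t_i(r) + \sum_{T\in\cB^t_i} W(T)\tGamma(T,t,r)\cx(T)$, where $\cB^t_i$ consists of labelled trees containing at least one backtracking path of length~$3$ or a backtracking star. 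You appear to be conflating this step with the later Lemma~\ref{W2->S}, where replacing $(W_{il})^2$ by $s_{il}$ \emph{does} leave the fluctuation term you describe, but that is a separate approximation at a different stage of the proof of Theorem~\ref{th-amp}, not part of Proposition~\ref{z-x}.

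Because the residual is purely a sum over $\cB^t_i$, the error estimate is combinatorial rather than concentration-based. After expanding $\EE(\cx^t_i(r))^m$, the only terms to control are $m$--uples with at least one $T_1\in\cB^t_i$. The key structural fact, which replaces your variance computation, is that the length-$3$ backtracking path (or backtracking star) in $T_1$ forces some edge of the merged unlabelled graph $\bar G = \bU(\bG(T_1,\ldots,T_m))$ to arise from the fusion of at least three tree edges, hence $|E(\bar G)| \leq ({\frak m}-1)/2$. This is exactly the same condition that drives the bound on $\widecheck\cA_i({\frak m})$ in the proof of Proposition~\ref{z-tz}; the spanning-tree count then gives at most $C K^{({\frak m}-1)/2}$ admissible labellings, which against the moment weight $|\EE\prod_k W(T_k)| \leq CK^{-{\frak m}/2}$ yields $O(K^{-1/2})$. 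Your second-moment estimate on $\sum_l((W_{il})^2 - s_{il})$ would indeed give $O(K^{-1/2})$ if such a sum were present, but it is not the structure of the residual, and your ``non-tree $O(K^{-1})$'' bound controls the $\chi^{\text{NT}}_{\frak m}$ term from Lemma~\ref{Gtree}, which is not relevant here. Finally, no induction on $t$ is required once Lemma~\ref{xtree} is in hand; the moment bound~\eqref{Ezbnd} and the graph-counting estimate close the argument in one shot.
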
 

Recalling the bound~\eqref{Ezbnd}, the bound~\eqref{Exbnd} can be deduced from
this proposition. 

To prove this proposition, new objects need to be introduced. In a directed 
and labelled graph, 
\begin{itemize}
\item 
A backtracking path of length $3$ is a path 
$a \to b \to c \to d$ such that $\ell(a) = \ell(c)$ and $\ell(b) = \ell(d)$. 
\item A backtracking star is a structure 
$a,b \to c \to d$ where $\ell(a) = \ell(b) = \ell(d)$. 
\end{itemize}
Let $\ucU^t$ be the set of equivalence classes of trees in $\overline\cT^t$
from which the marks have been removed.  Denote as $\cB^t$ the set of trees
$T$ in $\ucU^t$ that satisfy the following additional conditions: 
\begin{itemize}
\item If $u\to v \in E(T)$, then $\ell(u) \neq \ell(v)$. 
\item There exists in $T$ at least one backtracking path of length $3$ or one
backtracking star. 
\end{itemize}
Finally, $\cB_i^t$ is the subset of trees in $\cB^t$ with the root having 
the type $i$, and such that the type of the child $v$ of the root satisfies
$\ell(v) \neq i$. 

The proof of Proposition~\ref{z-x} relies on the following structural lemma: 
\begin{lemma}[Lemma~3 of \cite{bay-lel-mon-15}] 
\label{xtree} 
\[
x^t_i(r) = z^t_i(r) + \sum_{T\in \cB^t_i} W(T) \tGamma(T,t,r) x(T) , 
\]
where $| \tGamma(T,t,r) | \leq C(d,t)$. 
\end{lemma}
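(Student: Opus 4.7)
The plan is an induction on $t$. The base case $t=0$ is immediate: $\cx^0_i(r)=x^0_i(r)=z^0_i(r)$ and the sum over $\cB^0_i$ is vacuous. For the inductive step I would start from~\eqref{ampol} and~\eqref{zi(r)} and write
\begin{align*}
\cx^{t+1}_i(r)-z^{t+1}_i(r)
&=\sum_{l\in[n]} W_{il}\Bigl[f_r(\bcx^t_l,l,t)-f_r(\bz^t_{l\to i},l,t)\Bigr]\\
&\quad -\sum_{s\in[q]}f_s(\bcx^{t-1}_i,i,t-1)\sum_{l\in[n]}W_{il}^2\,(\partial_s f_r)(\bcx^t_l,l,t).
\end{align*}
Since $f_r(\cdot,l,t)$ is a polynomial of degree at most $d$, a multivariate Taylor expansion about $\bz^t_{l\to i}$ converts the bracket in the first line into a \emph{finite} sum of products of partial derivatives of $f_r$ evaluated at $\bz^t_{l\to i}$ times monomials in the displacement $\Delta^t_{l\to i}(s)=\cx^t_l(s)-z^t_{l\to i}(s)$.

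The next step is to split each displacement using the identity $z^t_l(s)-z^t_{l\to i}(s)=W_{il}\,f_s(\bz^{t-1}_{i\to l},i,t-1)$ (direct from~\eqref{zij(r)}--\eqref{zi(r)}):
\[
\Delta^t_{l\to i}(s)=\bigl[\cx^t_l(s)-z^t_l(s)\bigr]+W_{il}\,f_s(\bz^{t-1}_{i\to l},i,t-1).
\]
The first bracket is expanded by the inductive hypothesis as a sum over trees in $\cB^t_l$, and attaching such a subtree along the edge $i\to l$ yields a tree in $\cB^{t+1}_i$ because the backtracking defect is inherited. Plugging the second summand into the first-order Taylor contribution produces
\[
\sum_{l,s}W_{il}^2\,(\partial_s f_r)(\bz^t_{l\to i},l,t)\,f_s(\bz^{t-1}_{i\to l},i,t-1),
\]
which matches the Onsager correction term by term up to the replacements $\bz^t_{l\to i}\mapsto \bcx^t_l$ inside $\partial_s f_r$ and $\bz^{t-1}_{i\to l}\mapsto \bcx^{t-1}_i$ inside $f_s$. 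Each replacement error is a further polynomial in differences $\bcx-\bz$, to which the induction hypothesis again applies; the corresponding trees inherit the double edge $\{i,l\}$ together with an extra back-and-forth hop, yielding either a backtracking path of length $3$ through $i\to l\to\pi(l)\to\cdots$ or a backtracking star at $l$, so they lie in $\cB^{t+1}_i$.

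The higher-order Taylor terms multiply two or more copies of $\Delta^t_{l\to i}$. Whenever at least two copies of the $W_{il}f_s(\bz^{t-1}_{i\to l},\cdot)$ piece appear on the same edge, the resulting monomial contains $W_{il}^2$ with two children of $l$ of type $i$, which is a backtracking star at $l$; any copy of the $\cx^t_l(s)-z^t_l(s)$ piece contributes a subtree in $\cB^t_l$ by induction. In each case the generated labelled tree belongs to $\cB^{t+1}_i$ and the accumulated coefficient $\tGamma(T,t+1,r)$ is a finite product of polynomial coefficients $\alpha^{(n)}_{\cdots}(\cdot,\cdot,\cdot)$ together with multinomial factors from Taylor's formula; since the depth is bounded by $t+1$ and the out-degree by $d$, this product is uniformly bounded by $C(d,t+1)$.

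The main obstacle is the combinatorial bookkeeping of the Onsager cancellation in the second paragraph: one must check that the \emph{only} first-order Taylor contribution surviving the Onsager term is the one that can be reindexed as a tree in $\cB^{t+1}_i$, and that every other first-order contribution either cancels exactly (this is precisely what dictates the coefficient $W_{il}^2\,\partial_s f_r$ of the Onsager correction) or is absorbed into a higher-order tree. Once this matching is in place, identifying each monomial with a labelled tree and verifying the root/child-type conditions defining $\cB^{t+1}_i$ is routine, exactly as in \cite[Lm.~3]{bay-lel-mon-15}; the sparse variance profile plays no role here since the identity is purely algebraic in the entries of $W$.
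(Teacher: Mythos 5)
The paper does not actually supply a proof here: it presents this as Lemma~3 of \cite{bay-lel-mon-15} and (as it remarks explicitly about its companion Lemma~\ref{ztree}) treats such structural identities as purely algebraic facts about polynomials in the entries of $W$, insensitive to the variance profile. Your sketch correctly reconstructs the inductive strategy underlying the BLM argument: induction on $t$, exact (finite) Taylor expansion of the degree-$\leq d$ polynomial $f_r(\cdot,l,t)$ around $\bz^t_{l\to i}$, the key two-term split
\[
\cx^t_l(s)-z^t_{l\to i}(s)=\bigl[\cx^t_l(s)-z^t_l(s)\bigr]+W_{il}\,f_s(\bz^{t-1}_{i\to l},i,t-1),
\]
the cancellation of the first-order $W_{il}^2$ piece against the Onsager correction up to $\bz\mapsto\bcx$ replacement errors, and absorption of the rest into trees carrying a backtracking defect. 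Your closing observation that the sparsity of $S$ is irrelevant here is exactly the point the paper makes for Lemma~\ref{ztree}, and it applies equally to this lemma.

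Two small remarks. First, the base case should be $t=1$ rather than $t=0$: the non-backtracking iterate $z^0_i$ is not defined (only $z^0_{i\to j}=\cx^0_i$ is), and $\cx^1_i(r)=\sum_l W_{il}f_r(\bcx^0_l,l,0)=z^1_i(r)$ with $\cB^1_i=\emptyset$ is the right starting point. Second, the genuinely delicate content of the lemma --- that \emph{every} residual monomial really does index a labelled tree satisfying the two defining conditions of $\cB^{t+1}_i$ (no edge joins equal types, and there is a backtracking path of length $3$ or a backtracking star), with a coefficient uniformly bounded by $C(d,t)$ --- is only sketched in your argument; your replacement chain $\bcx^{t-1}_i\leftrightarrow\bz^{t-1}_{i\to l}$ must again be split through $\bz^{t-1}_i$, and verifying the resulting bookkeeping is precisely the work done in \cite[Lemma~3]{bay-lel-mon-15}. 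You acknowledge this deferral, and since the paper itself makes the identical deferral, your proposal is consistent with the paper's treatment.
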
 

\begin{proof}[Proof of Proposition~\ref{z-x}] 
Once again, we assume that $m(r) = m$ and $m(s) = 0$ for 
$s\in[q]\setminus \{ r \}$. We write 
\[
\EE (x^t_i(r))^m  = \EE 
 \Bigl( z^t_i(r) + \sum_{T\in \cB^t_i} W(T) \tGamma(T,t,r) x(T) \Bigr)^m
\]
where $z^t_i(r)$ is given by Lemma~\ref{ztree}. With this at hand, the 
proposition will be established once we bound the terms of the type
\[
\sum_{T_1\in \cB^t_i} 
  \sum_{T_2,\ldots, T_m \in \cB^t_i \cup \cT^t_i(r)} 
 \Bigl| \EE \prod_{k=1}^m W(T_k) \Bigr| .
\]
The argument is nearly the same as in the proof of Proposition~\ref{z-tz}.
Defining the set 
\begin{align*}
\cD_i({\frak m}) = \Bigl\{ & (T_1,\ldots, T_k) \ : \  T_1 \in \cB^t_i, 
   \ T_2, \ldots, T_m  \in  \cB^t_i \cup \cT^t_i(r), \\ 
 &\forall j < l, \ \sum_{k\in[m]} \phi(T_k)_{jl} \neq 1 , \\ 
 &\forall j < l, \ \sum_{k\in[m]} \phi(T_k)_{jl} > 0 \ \Rightarrow \{j,l\} \in 
 \cK,  \\ 
 &\sum_{k\in[m]} \sum_{j<l} \phi(T_k)_{jl} = {\frak m} \Bigr\}. 
\end{align*} 
Recalling the notations of the proof of Proposition~\ref{z-tz}, we need to
show that 
\[
\delta_{\frak m} = 
 \sum_{\bar G \in \cR^{\frak m}} \ 
 \sum_{\substack{G \in \cG^{\frak m}_i \, : \\ \bU(G) = \bar G}} \ 
 \sum_{\substack{(T_1,\ldots, T_m) \in \cD_i({\frak m}) \, : \\
       \bG(T_1,\ldots, T_m)= G}} 
    \Bigl| \EE \prod_{k=1}^m W(T_k) \Bigr| 
\]
satisfies $\delta_{\frak m} \leq C K^{-1/2}$. 

As usual, $\left| \EE \prod_{k=1}^m W(T_k) \right| \leq C K^{-{\frak m} / 2}$.  We
need to bound $| \cD_i({\frak m}) |$.  To this end, we observe that since $T_1 \in
\cB^t_i$, resulting in this tree having a backtracking path or a backtracking
star, it is easy to see that the graph $\bar G$ has an edge that results from
the fusion of three edges at least. This implies that $|E(\bar G)| \leq ({\frak m} -
1) / 2$. Reusing the argument of the proof of Proposition~\ref{z-tz}, we obtain
that $| \cD_i({\frak m}) | \leq C K^{({\frak m}-1)/2}$, which shows that 
$\delta_{\frak m} \leq C K^{-1/2}$ as required. Proposition~\ref{z-x} is proven.
\end{proof}

Making use of the independence of the matrices $W^t$, we now show that the
joint moments of the elements of a sample $\by^t_{i}$ are close to their
analogues for $U^t_i$, which distribution is provided before
Proposition~\ref{amp-vec}. It will be enough to consider that the matrices 
$W^t$ are Gaussian. 
\begin{proposition} 
\label{cvg-y} 
Assume that the matrix $W$ is Gaussian. Then for each multi-index
$\bm \in \NN^q$, each integer $t > 0$, 
\[
\max_{i\in[n]} 
\left| \EE (\by^t_{i})^\bm - \EE (U^{t}_{i})^\bm \right| \tolong 0 .
\]
\end{proposition}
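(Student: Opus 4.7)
The plan is to proceed by induction on $t$, exploiting that $\by^{t+1}$ is built from a fresh Gaussian matrix $W^t$ independent of all earlier iterates.

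\emph{Base case} ($t=1$): $\by^1_i = \sum_l W^0_{il} f(\bcx^0_l, l, 0)$ is a linear combination of centered independent Gaussian entries with deterministic coefficients, hence is itself a centered Gaussian vector with covariance $\sum_l s_{il} f(\bcx^0_l, l, 0) f(\bcx^0_l, l, 0)^\T = Q^1_i$. Therefore $\EE(\by^1_{i_n})^\bm = \EE(U^1_{i_n})^\bm$ holds with equality.

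\emph{Inductive step.} Set $\mcF^t = \sigma(W^0,\ldots,W^{t-1})$, and assume the claim at depth $t$ in the strengthened form that, for any finite collection of distinct roots $l_1,\ldots,l_k$, the joint moments $\EE \prod_s (\by^t_{l_s \to \cdot})^{\bm_s}$ converge to $\prod_s \EE (U^t_{l_s})^{\bm_s}$. Since the vectors $\by^t_{l \to i}$ are $\mcF^t$-measurable while $W^t$ is independent of $\mcF^t$, the recursion $y^{t+1}_i(r) = \sum_l W^t_{il} f_r(\by^t_{l\to i},l,t)$ shows that, conditionally on $\mcF^t$, $\by^{t+1}_i$ is a centered Gaussian vector with covariance $\hat Q^{t+1}_i := \sum_l s_{il} f(\by^t_{l\to i},l,t) f(\by^t_{l\to i},l,t)^\T$. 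Conditional Wick's formula then yields $\EE[(\by^{t+1}_i)^\bm \mid \mcF^t] = P_\bm(\hat Q^{t+1}_i)$ for the very same polynomial $P_\bm$ that produces $\EE (U^{t+1}_i)^\bm = P_\bm(Q^{t+1}_i)$. It therefore suffices to show that $\EE P(\hat Q^{t+1}_{i_n}) - P(Q^{t+1}_{i_n}) \to 0$ for every polynomial $P$, which reduces to (i) convergence of the means $\EE \hat Q^{t+1}_i(r,r') \to Q^{t+1}_i(r,r')$, a direct consequence of the induction hypothesis (the $l\to i$ restriction being an $O(1/K)$ perturbation absorbed in $\sum_l s_{il} \leq C$), and (ii) concentration of the entries of $\hat Q^{t+1}_i$, whose diagonal contribution $l_1=l_2$ to the variance is $O(\sum_l s_{il}^2) = O(1/K)$ by Assumption~\ref{ass-A}, while its off-diagonal contribution is controlled by the strengthened induction hypothesis.

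\emph{Main obstacle.} The delicate step is verifying the strengthened hypothesis itself, namely the asymptotic factorization of joint moments of $\by^t_{l_1\to\cdot},\ldots,\by^t_{l_k\to\cdot}$ at distinct roots. I would handle this by adapting the tree-enumeration technique of Propositions~\ref{z-tz}--\ref{z-y} to tuples of non-backtracking trees rooted at distinct vertices: in the expansion of $\EE \prod_s (\by^t_{l_s\to\cdot})^{\bm_s}$ over such tuples, any configuration whose trees share an edge carries an extra factor $K^{-1/2}$, so only ``product of vertex-disjoint trees'' configurations survive in the limit and their combined contribution reproduces $\prod_s \EE(U^t_{l_s})^{\bm_s}$. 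Once this is in place, the conditional Wick computation above propagates the convergence from depth $t$ to depth $t+1$ and closes the induction.
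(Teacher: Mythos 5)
Your induction skeleton — use the conditional Gaussian law of $\by^{t+1}_i$ given $\mcF^t$, apply Wick's formula, and reduce the claim to $\hat Q^{t+1}_{i_n}\toprobashort Q^{t+1}_{i_n}$ (bias plus concentration), finishing with a continuity/truncation argument — matches the paper's architecture. Where you diverge, and where the real content lies, is in the concentration step, which you flag yourself as the ``main obstacle.'' The paper does \emph{not} establish any joint-moment factorization at distinct roots. Instead it formulates a weighted law of large numbers $\cA(u)$ for sums $\sum_\ell s_{i\ell}b_\ell\bigl((\by^u_{\ell\to j})^\bm-\EE(U^u_\ell)^\bm\bigr)$ and controls the variance directly by Poincaré's inequality applied to the conditional Gaussian law $\mcL(\by^{u+1}_{\cdot\to n}\mid\mcF^{u-1})$. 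Poincaré gives everything in one shot: the cross term $k\neq l$ automatically picks up a factor $s_{kl}$ through the off-diagonal block of $\Sigma^{u+1}_n$, so the variance is bounded by $C\sum_{k}s_{ik}^2+C\sum_{k\neq l}s_{ik}s_{il}s_{kl}=O(1/K)$, with no decoupling lemma needed. Your route — proving a multi-rooted decoupling by tree enumeration — is plausible and arguably more in the combinatorial spirit of Propositions~\ref{z-tz}--\ref{z-y}, but you leave exactly that hard step as a program. If you pursue it, note three gaps in the sketch: (i) the off-diagonal weight $\sum_{l_1\neq l_2}s_{il_1}s_{il_2}$ is $O(1)$, not $o(1)$, so you need the covariance bound to be uniform over $l_1\neq l_2$; (ii) the gain from shared-edge configurations between trees rooted at $l_1$ and $l_2$ is $K^{-1}$, not $K^{-1/2}$, since an all-even tuple with a shared edge glues into a single connected graph with two \emph{fixed} labels and hence one fewer free vertex; and (iii) to propagate the factorized hypothesis to depth $t+1$, you must also argue that the $\mcF^t$-conditional cross-covariance between $\by^{t+1}_{l_1}$ and $\by^{t+1}_{l_2}$, which is proportional to $s_{l_1l_2}=O(1/K)$, is negligible — a point the proposal does not address. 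In sum: coherent and genuinely different, but the decoupling lemma you invoke is deferred rather than proved, whereas the paper's Poincaré argument delivers the concentration with much less combinatorics.
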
 
\begin{proof}
The uniform convergence we need to show can be equivalently stated as follows:
for each sequence $(i_n)$ valued in $[n]$, it holds that 
\begin{equation}
\label{cvg-in} 
\EE (\by^t_{i_n})^\bm - \EE (U^{t}_{i_n})^\bm  \tolong 0 .
\end{equation} 
Remember that $\by^0_{i\to j} = \bcx^0_i$ and 
$\by^{t+1}_{i\to j} = \sum_{l\in[n]\setminus\{j\}} 
  W_{il}^t f(\by^t_{l\to i}, l, t)$ for each $i\neq j$. First, using this
equation, it is easy to establish by recurrence on $t$ that 
\begin{equation}
\label{bmomy} 
\forall t \geq 0, \ \forall \bm \in \NN^q, \ 
\sup_n \max_{i\neq j} \EE | (\by^{t}_{i\to j})^\bm | < \infty. 
\end{equation} 
Fixing $t$, we shall show by recurrence on $u = 1, \ldots, t-1$ the following
assertion that we denote as $\cA(u)$: For each multi-index $\bm$, each
sequence $(j_n)$ valued in $[n]$, and each $(n-1)$--uple 
$(b_\ell)_{\ell\in[n]\setminus\{ j_n\}}$ with bounded elements, it holds that 
\[ 
\sum_{\ell\in[n]\setminus\{j\}} s_{i\ell} b_{\ell} 
(\by^u_{\ell\to j})^\bm - s_{i\ell} b_\ell \EE (U^u_\ell)^\bm  
   \toprobalong 0 ,  
\] 
where $j = j_n$. 
In all the proof, we shall need the covariance matrices $H_{ij}^u$ defined
for $u\geq 1$ and $i\neq j$ as 
\[
H_{ij}^u = \sum_{l\in[n]\setminus\{ j\}} s_{il} 
  f(\by^{u-1}_{l\to i}, l, u-1) f(\by^{u-1}_{l \to i}, l, u-1)^\T . 
\]
Starting with $\cA(1)$, let us assume for notational simplicity and 
without generality loss that $j = n$. We have 
\begin{align*}
\sum_{l\in[n-1]} s_{il} b_{l} 
 \left( (\by^{1}_{l\to n})^\bm - \EE (U^{1}_l)^\bm \right)  &= 
\sum_{l\in[n-1]} s_{il} b_{l} \left(  
 (\by^{1}_{l\to n})^\bm -  
   \EE (\by^{1}_{l\to n})^\bm \right) \\
&\phantom{=} 
 + \sum_{l\in[n-1]} s_{il} b_{l} 
   \left( \EE (\by^{1}_{l\to n})^\bm  - \EE (U^{1}_l)^\bm \right) \\
&= \chi_1 + \chi_2 .  
\end{align*} 
It is obvious that $\by^{1}_{l\to n} \sim \cN(0, H^1_{ln})$, 
where $H^1_{ln} = Q^1_l - E_{l,n}$, and the rank-one matrix  
\[
E_{l,n} = s_{l,n} f(\bcx^0_{n}, n, 0) f(\bcx^0_{n}, n, 0)^\T 
\]
has a spectral norm that converges to zero by Assumptions~\ref{ass-A} 
and~\ref{x0}. It is then easy to deduce that $\chi_2 \to_n 0$. 
 
To deal with $\chi_1$, we make use of Poincaré's inequality 
\cite[Ch.~2]{pas-livre}. For $u \geq 1$, let $\Sigma^{u}_{n}$ be the 
the $q(n-1) \times q(n-1)$ covariance matrix defined as 
$\Sigma^{u}_{n}
= \begin{bmatrix} \Sigma^{u}_{n}(k,l) \end{bmatrix}_{k,\ell = 1}^{n-1}$ where
the $q\times q$ block $\Sigma^{u}_{n}(k,l)$ is given as 
\[
\Sigma^{u}_{n}(k,l) = \left\{ 
\begin{array}{l} H_{k,n}^u  \ \text{if} \ k=l \\ 
s_{kl} f(\by^{u-1}_{l\to k}, l, u-1) 
  f(\by^{u-1}_{k\to l}, k, u-1)^\T 
 \ \text{if not.} 
\end{array} \right. 
\]
Note that $\Sigma^{1}_{n}$ is deterministic, and the $\RR^{q(n-1)}$--valued 
vector $\by^{1}_{\cdot\to n} = \begin{bmatrix} \by^1_{i\to n}
\end{bmatrix}_{i=1}^{n-1}$ has the distribution $\cN(0, \Sigma^{1}_{n})$. 
Define the function 
$\Gamma(\by^{1}_{\cdot\to n}) = \sum_{l\in[n-1]} s_{il} b_l
(\by^{1}_{l\to n})^\bm$, and write $\nabla_{\by} \by^\bm = p_\bm(\by)$, a
$\RR^q$--valued polynomial. Then, we obtain by Poincaré's inequality 
\[
\EE \chi_1^2 = 
\var\left( \Gamma(\by^{1}_{\cdot\to n}) \right) 
\leq  
 \sum_{k,l=1}^{n-1} s_{i k} s_{i l} b_{k} b_{l}
 \EE p_\bm(\by_{k\to n}^1)^\T \Sigma^{1}_{n}(k,l) 
  p_\bm(\by_{l\to n-1}^1) . 
\]
Considering the expression of $\Sigma^1_n(k,l)$, and using Assumption~\ref{x0}
and the bound~\eqref{bmomy}, the right hand side of the previous display 
satisfies the bounds 
\[
\Bigl| \sum_{k=l} \cdots \Bigr| \leq C \sum_{k=1}^{n-1} s_{ik}^2 \leq 
   \frac CK, \quad \text{and} \quad 
\Bigl| \sum_{k\neq l} \cdots \Bigr| \leq C \sum_{k,\ell=1}^{n-1} 
  s_{ik} s_{il} s_{kl} \leq \frac CK. 
\] 
It results that $\chi_{1} \toprobashortshort_n 0$, and $\cA(1)$ is 
established. 

Assuming that $\cA(u)$ is true, let us establish $\cA(u+1)$.  Define the
$\sigma$--field $\mcF^u = \sigma( W^0, \ldots, W^{u})$. Then, still setting
$j=n$, the conditional distribution 
$\mcL( \by^{u+1}_{\cdot\to n} \, | \, \mcF^{u-1})$ of the vector 
$\by^{u+1}_{\cdot\to n} = \begin{bmatrix} \by^{u+1}_{i\to n}
\end{bmatrix}_{i=1}^{n-1}$ given $\mcF^{u-1}$ is $\cN(0, \Sigma^{u+1}_n)$. 
We also have from $\cA(u)$ that 
\begin{equation}
\label{H-R} 
\forall (i_n) \text{ valued in } [n], \ H_{i_nn}^{u+1} - Q^{u+1}_{i_n} 
  \toprobalong 0 . 
\end{equation} 
With this at hand, we write 
\begin{align*}
\sum_{l\in[n-1]} s_{il} b_{l} 
 \left( (\by^{u+1}_{l\to n})^\bm - \EE (U^{u+1}_l)^\bm \right)  &= 
\sum_{l\in[n-1]} s_{il} b_{l} \left(  
 (\by^{u+1}_{l\to n})^\bm -  
   \EE \left[ (\by^{u+1}_{l\to n})^\bm \, | \, \mcF^{u-1} \right] \right) \\
&\phantom{=} 
 + \sum_{l\in[n-1]} s_{il} b_{l} 
   \left( \EE \left[ (\by^{u+1}_{l\to n})^\bm \, | \, \mcF^{u-1} \right] 
  - \EE (U^{u+1}_l)^\bm \right) \\
&= \chi_1 + \chi_2 .  
\end{align*} 
Given a small $\delta > 0$, we write 
\begin{align*} 
| \chi_2 | &\leq 
  \sum_{l\in[n-1]} s_{il} |b_{l}|  
   \sup_{H \geq 0 \, : \, \| H - Q^{u+1}_l \| \leq \delta} 
   \left| \varphi(H) - \varphi(Q^{u+1}_l) \right| \\
 &\phantom{=} + \sum_{l\in[n-1]} s_{il} |b_{l}| \,  | \varphi(H_{ln}^{u+1}) | 
   \1_{\|  H_{ln}^{u+1} - Q^{u+1}_l \| > \delta} 
 + \sum_{l\in[n-1]} s_{il} |b_{l}| \,  | \varphi( Q^{u+1}_l) | 
   \1_{\|  H_{ln}^{u+1} - Q^{u+1}_l \| > \delta} \\
&= \chi_{2,1} + \chi_{2,2} + \chi_{2,3}, 
\end{align*} 
where $\varphi(H) = \EE Y^\bm$ when $Y \sim \cN(0, H)$. Observe that $\sum_\ell
s_{il} |b_{l}|$ is bounded. Furthermore, assuming that $H$ and $Q$ belong to
a compact, it holds that $\varphi(H) - \varphi(Q) \to 0$ when 
$\| H - Q \| \to 0$ by the continuity of $\varphi$. Thus, using the 
bound~\eqref{Qfini}, we obtain that $\chi_{2,1} \to 0$ as $\delta\to 0$. 
We also have by the Jensen and the Cauchy-Schwarz inequalities that 
\[
\EE \chi_{2,2} \leq 
  \sum_{l\in[n-1]} s_{il} |b_{l}| \,  
  (\EE | (\by^{u+1}_{l\to n})^\bm |^2)^{1/2}  
   \PP[\|  H_{ln}^{u+1} - Q^{u+1}_l \| > \delta]^{1/2} . 
\]
The convergence~\eqref{H-R} can be rewritten as 
\[
\forall\delta > 0, 
   \max_{l\in[n]} \PP[\|  H_{ln}^{u+1} - Q^{u+1}_l \| > \delta] 
 \xrightarrow[n\to\infty]{} 0 .
\]
Using the bound~\eqref{bmomy}, we obtain that $\EE \chi_{2,2} \to_n 0$, thus,
$\chi_{2,2} \toprobashortshort_n 0$. 
It is easy to show that $\chi_{2,3} \toprobashortshort_n 0$. In conclusion, 
$\chi_{2} \toprobashortshort_n 0$. 

To deal with $\chi_1$, we use Poincaré's inequality involving this time the 
conditional distribution 
$\mcL( \by^{u+1}_{\cdot\to n} \, | \, \mcF^{u-1})$. By an argument similar to
above, this leads to 
\[
\EE \chi_1^2 = 
\EE \var\left( \Gamma(\by^{u+1}_{\cdot\to n}) \, | \, \mcF^{u-1} \right) 
\leq  
 \sum_{k,l=1}^{n-1} s_{i k} s_{i l} b_{k} b_{l}
 \EE\left[ p_\bm(\by_{k\to n}^{u+1})^\T \Sigma^{u+1}_{n}(k,l) 
  p_\bm(\by_{l\to n}^{u+1}) \right] \leq \frac CK  
\]
with the help of Inequality~\eqref{bmomy}. 
It results that $\chi_{1} \toprobashortshort_n 0$, and $\cA(u+1)$ is 
established.  

We now use $\cA(t-1)$ to prove the convergence stated by our proposition. 
Recall that 
$\by^{t}_{i} = \sum_{l\in[n]} W_{il}^{t-1} 
 f(\by^{t-1}_{l\to i}, \ell, t-1)$. Set $i = i_n$ as in~\eqref{cvg-in}.  
Given $A > 0$, define the real function $\eta_A : \RR \to \RR$ as
the function that coincides with the identity on $[-A,A]$ and is equal to $A$
on $(A,\infty)$ and to $-A$ on $(-\infty, -A)$. To study 
$\EE (\by^{t}_i)^\bm - \EE (U^{t}_i)^\bm$, we can assume that 
$\by^{t}_{i} = (H_{i}^t)^{1/2} \xi$ and $U^{{t}}_i = (Q^{t}_i)^{1/2} \xi$, 
where 
\[
H_{i}^t = \sum_{l\in[n]} s_{il} 
  f(\by^{t-1}_{l\to i}, l, t-1) f(\by^{t-1}_{l \to i}, l, t-1)^\T , 
\]
$\xi \sim \cN(0, I_q)$ is independent of $H_{i}^t$, and $(\cdot)^{1/2}$ is the 
semidefinite positive square root. Write $Y = (\by^{t}_{i})^\bm$ and 
$U = (U^{t}_i)^\bm$.  With this, we have 
\begin{align*}
\EE Y - \EE U &= 
\left( \EE \eta_A(Y) - \EE\eta_A(U) \right) 
 + \left( \EE Y - \EE \eta_A(Y) \right) + \left( \EE U - \EE \eta_A(U) \right) 
    \\
&= \chi_1 + \chi_2 + \chi_3 .
\end{align*}
For $\delta > 0$, we have 
\begin{align*} 
|\chi_1 | &\leq \sup_{H \geq 0 \, : \, \| H - Q^{t}_i \|
 \leq \delta } \left| \EE_\xi \eta_A( H^{1/2} \xi)^\bm ) 
 - \EE_\xi \eta_A( ((Q^{t}_i)^{1/2} \xi)^\bm ) \right| 
  + 2A \PP\left[ \| H_{i}^t - Q^t_i \| > \delta \right] \\ 
 &\leq C_1(A,\delta) + C_2(A,\delta,n), 
\end{align*} 
where $C_1(A,\delta) \to 0$ as $\delta \to 0$, and $C_2(A,\delta,n) \to 0$ 
as $n\to\infty$ as in~\eqref{H-R}. Regarding $\chi_2$, we have 
\[
\chi_2 = \EE ( Y - \eta_A(Y) ) \1_{|Y| > A} \leq 
 \EE ( |Y| + A ) \1_{|Y| > A} \leq \sup_n \frac{\EE Y^2 + \EE | Y|^3}{A^2} 
 \leq \frac{C}{A^2} 
\]
for some $C > 0$. We have a similar bound for $\chi_3$. By taking $A$ large
enough then $\delta$ small enough, we easily obtain that 
$\EE (\by^t_{i})^\bm - \EE (U^{t}_i)^\bm  \to_n 0$. 
\end{proof}

The results of Propositions~\ref{z-tz}, \ref{z-y}, \ref{z-x}, and~\ref{cvg-y} 
will lead to the convergences~\eqref{cvg-vec}, which will be the consequences 
of the two following propositions.  
\begin{proposition}
\label{cvg-mean} 
Let $\psi^{(n)} : \RR^q \times [n] \to \RR$ be such that 
$\psi^{(n)}(\cdot, l)$ is a multivariate polynomial with a bounded 
degree and bounded coefficients as functions of $(l,n)$. Then, for each 
set $\cS^{(n)} \subset [n]$ with $|\cS^{(n)}| \to_n \infty$, it holds that 
\[
\frac{1}{|\cS^{(n)}|} \sum_{i\in\cS^{(n)}} 
  \EE \psi(\bcx^{(n),t}_i,i) - \EE\psi(U^{(n),t}_i,i) 
 \xrightarrow[n\to\infty]{} 0 . 
\]
\end{proposition}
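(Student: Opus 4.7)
The plan is to combine the four previous propositions in a telescoping chain that goes from $\bcx^t_i$ to $U^t_i$ via the intermediate vectors $\bz^t_i$ (non-backtracking iterates with the true $W$), $\tbz^t_i$ (non-backtracking iterates with a Gaussian replacement $\tW$), and $\tby^t_i$ (Gaussian non-backtracking iterates with i.i.d.\ fresh matrices). By linearity of expectation and the bounded-degree polynomial structure of $\psi^{(n)}(\cdot,l)$, it suffices to prove the result when $\psi^{(n)}(\bx,l) = c_{l,n}\, \bx^{\bm}$ for a fixed multi-index $\bm \in \NN^q$ and a bounded family of scalars $\{c_{l,n}\}$. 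So the goal reduces to showing that
\[
\max_{i\in[n]} \left| \EE (\bcx^{(n),t}_i)^{\bm} - \EE (U^{(n),t}_i)^{\bm} \right| \xrightarrow[n\to\infty]{} 0,
\]
after which averaging against the bounded coefficients $c_{i,n}$ on any set $\cS^{(n)}$ gives the statement.

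First, I would apply Proposition~\ref{z-x} to get $|\EE (\bcx^t_i)^\bm - \EE (\bz^t_i)^\bm| \leq CK_n^{-1/2}$ uniformly in $i$. Next, let $\tW^{(n)}$ be a symmetric Gaussian matrix with the same variance profile $S^{(n)}$ as $W^{(n)}$; its entries satisfy Assumption~\ref{ass-X} with some moment constants $\Cmom(k)$, so Proposition~\ref{z-tz} applies and yields $|\EE (\bz^t_i)^\bm - \EE (\tbz^t_i)^\bm| \leq CK_n^{-1/2}$. Then Proposition~\ref{z-y}, applied to the Gaussian model $\tW^{(n)}$, gives $|\EE (\tbz^t_i)^\bm - \EE (\tby^t_i)^\bm| \leq CK_n^{-1/2}$. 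Finally, Proposition~\ref{cvg-y}, which is only available in the Gaussian case, gives $\EE (\tby^t_{i_n})^\bm - \EE (U^t_{i_n})^\bm \to 0$ along every sequence $(i_n)$; this pointwise statement along arbitrary sequences is equivalent to the uniform statement $\max_{i\in[n]}|\EE (\tby^t_i)^\bm - \EE (U^t_i)^\bm| \to 0$. Crucially, $U^{(n),t}_i$ is defined purely from the variance profile and the polynomial coefficients, so it is \emph{the same} Gaussian vector for both $W^{(n)}$ and $\tW^{(n)}$, which is exactly what makes the Gaussian detour legitimate.

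Combining the four bounds and using $K_n \to \infty$ from Assumption~\ref{ass-A}, I obtain $\max_{i\in[n]}|\EE (\bcx^t_i)^\bm - \EE (U^t_i)^\bm| \to 0$. The averaged statement on $\cS^{(n)}$ then follows from the trivial inequality
\[
\left| \frac{1}{|\cS^{(n)}|} \sum_{i\in\cS^{(n)}} \bigl(\EE\psi(\bcx^{(n),t}_i,i) - \EE\psi(U^{(n),t}_i,i)\bigr) \right| \leq \max_{i\in[n]} \left| \EE\psi(\bcx^{(n),t}_i,i) - \EE\psi(U^{(n),t}_i,i) \right|,
\]
which does not require any particular growth rate for $|\cS^{(n)}|$ beyond its being positive.

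There is no single hard step here: the proposition is essentially an accounting exercise that orchestrates the already-proven results. The only subtle point is verifying that the uniformity in $i$ genuinely propagates through all four steps. In the first three, this uniformity is explicit from the $CK_n^{-1/2}$ bounds, whose constants depend only on $d$, $t$, $\bm$, and the bounds from Assumptions~\ref{ass-A}, \ref{x0}, and~\eqref{alpha-vec}. In the fourth step, uniformity follows by the sequence-wise formulation of Proposition~\ref{cvg-y}. Once this uniformity is recognized, the averaging over $\cS^{(n)}$ is immediate and the proposition is proved.
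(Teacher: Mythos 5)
Your proof is correct and follows the same telescoping chain $\bcx \to \bz \to \tbz \to \tby \to U$ as the paper, applying Propositions~\ref{z-x}, \ref{z-tz}, \ref{z-y}, and~\ref{cvg-y} in the same order and correctly observing that $U^{(n),t}_i$ depends only on the variance profile, so it is unchanged by the Gaussian replacement. The only minor difference is the concluding step: you convert the sequence-wise statement of Proposition~\ref{cvg-y} into a uniform $\max_{i\in[n]}$ bound and then bound the average by the max, whereas the paper invokes Dominated Convergence via the moment bounds~\eqref{Exbnd} and~\eqref{Qfini}; both arguments are valid and equivalent here.
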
 
\begin{proof} 
By Propositions \ref{z-tz}, \ref{z-y} and \ref{z-x}, we obtain that for each $t
\geq 1$ and each $\bm \in \NN^q$, there exists $C > 0$ such that $\left| \EE
(\bcx^t_i)^\bm - \EE (\by^t_i)^\bm \right| \leq C K^{-1/2}$ for each $i\in
[n]$, and furthermore, $W$ can be assumed Gaussian in the construction of the
$\by^t_i$.  Using Proposition~\ref{cvg-y}, we obtain that 
$\max_{i\in[n]} | \EE (\bcx^t_{i})^\bm - \EE (U^t_{i})^\bm | \to_n 0$. 

Furthermore, using the moment bound~\eqref{Exbnd} and observing that the
mixed moments of the $U^t_i$ are bounded by~\eqref{Qfini}, we obtain the
result. 
\end{proof}

\begin{proposition}[adaptation of Proposition~5 of \cite{bay-lel-mon-15}] 
\label{varphi} 
Let $\psi^{(n)} : \RR^q \times [n] \to \RR$ be as in the previous proposition.
Let $\cS^{(n)} \subset [n]$ be a non empty set such that 
$| \cS^{(n)} | \leq C K^{(n)}$. Then, for each $t > 0$, 
\begin{align} 
& \var\Bigl(\frac 1K \sum_{i \in \cS} \psi(\bcx^t_i,i) \Bigr) 
 \tolong  0, \quad \text{and} \label{var-S} \\ 
&\var\Bigl(\frac 1n \sum_{i\in[n]} \psi(\bcx^t_i,i) \Bigr) 
 \tolong  0. \nonumber 
\end{align} 
\end{proposition}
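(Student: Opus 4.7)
The plan is to extend the tree-expansion and combinatorial counting of Propositions~\ref{z-tz}--\ref{z-x} to control the second moment of the linear statistics. By the linearity of variance and the uniform boundedness of the coefficients and degree of $\psi(\cdot,i)$, I may reduce to the single-monomial case $\psi(\bx, i) = \bx^{\bm}$ for a fixed multi-index $\bm$, and write
\[
\var\Bigl(\frac{1}{K}\sum_{i\in\cS}(\bcx^t_i)^{\bm}\Bigr)
 = \frac{1}{K^2} \sum_{i,j\in\cS} \cov\bigl((\bcx^t_i)^{\bm}, (\bcx^t_j)^{\bm}\bigr).
\]

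By repeating the tree-comparison arguments of Propositions~\ref{z-tz} and~\ref{z-x}, applied this time to the joint moment $\EE[(\bcx^t_i)^\bm (\bcx^t_j)^\bm]$, each $\bcx^t_i$ can be replaced by the non-backtracking iterate $\bz^t_i$ up to an error of order $K^{-1/2}$ uniformly in $(i,j)$. Lemma~\ref{ztree} then expresses $\cov((\bz^t_i)^\bm, (\bz^t_j)^\bm)$ as a sum over pairs of $m$-tuples of labelled trees $(T_1,\ldots,T_m;T'_1,\ldots,T'_m)$ with roots of types $i$ and $j$ respectively, each term being weighted by $\EE\prod_a W(T_a)\prod_b W(T'_b) - \EE\prod_a W(T_a)\,\EE\prod_b W(T'_b)$. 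This factor vanishes unless the two $m$-tuples share at least one unordered edge $\{k,\ell\}$, by independence of the entries of $W$ above the diagonal. The surviving configurations merge (through identification of same-type vertices) into a labelled graph $G$ whose unlabelled shape $\bar G=\bU(G)$ is connected across the $i$- and $j$-root components.

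The main quantitative step is a uniform-in-$i$ bound
\[
\sum_{j \in [n]} \bigl|\cov\bigl((\bz^t_i)^\bm, (\bz^t_j)^\bm\bigr)\bigr| \le C,
\]
obtained from the graph-merging count around~\eqref{xcheck}--\eqref{GKmu}. Indeed, for a shape $\bar G$ with $M+1$ vertices, the number of labelled $G$ with $\bU(G) = \bar G$ and a distinguished root labelled $i$ is at most $(CK)^M$ by Assumption~\ref{ass-A}, while $|\EE\prod_a W(T_a)\prod_b W(T'_b)|\le CK^{-{\frak m}/2}$, with ${\frak m}$ the total number of edges counted with multiplicity. Separating tree-shapes $\bar G$ (for which $M={\frak m}/2$ and the contribution is $O(1)$) from non-tree shapes (which bring an extra $K^{-1}$) as in Lemma~\ref{Gtree} yields the asserted uniform row bound, the number of shapes being finite at fixed depth $t$ and multi-index $\bm$.

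Combining these steps gives $\var\bigl(K^{-1}\sum_{i\in\cS}\cdot\bigr) \le K^{-2}|\cS|\cdot C = O(K^{-1})$ for the first assertion, and analogously $\var\bigl(n^{-1}\sum_{i\in[n]}\cdot\bigr) \le n^{-2}\cdot n\cdot C = O(n^{-1})$ for the second, both of which tend to zero. The main obstacle will be to carry out the tree-comparison argument for the \emph{mixed} moments of $\bcx^t_i$ and $\bcx^t_j$ — a step not covered directly by Propositions~\ref{z-tz}--\ref{z-x} but amenable to the same pairs-of-trees analysis — and to track the exponents of $K$ in the shared-edge count so that the sharing constraint precisely cancels the gain from the $M$ free labels in the merged shape, leaving an unambiguous $O(K^{-1})$ variance decay.
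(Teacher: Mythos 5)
Your proposal takes a genuinely different route from the paper's. The paper proves~\eqref{var-S} by embedding the problem into a new AMP sequence: it builds the $2n\times 2n$ block-diagonal matrix $\begin{bmatrix} W & 0 \\ 0 & \chW \end{bmatrix}$, where $\chW$ is an independent matrix of variance $1/K$ supported on the first row and column indexed by $\cS$, runs the AMP for $t$ steps mimicking the original (so that $\bcy^k_i = \bcx^k_i$), then one extra step in the $\chW$-block with activation $\psi$. This makes $\bcy^{t+1}_{n+1}(1) = K^{-1/2}\sum_{l\in\cS}\chX_{1l}\psi(\bcx^t_l,l)$, whose fourth moment is computed in two ways — directly by expansion, and via Propositions~\ref{z-tz}, \ref{z-y}, \ref{z-x}, and~\ref{cvg-y} applied to the new sequence — and the Gaussian fourth-moment identity $\EE \widetilde U^4 = 3(\EE\widetilde U^2)^2$ then yields~\eqref{var-S} for $\psi^2$, which is polarized to general $\psi$; the $[n]$-version is obtained by a sliding-window covering of $[n]$ by sets of size $K$. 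You instead write the variance as a double sum of covariances and bound these directly by a two-rooted tree-merging count, establishing a uniform row bound $\sum_{j\in[n]} |\cov((\bz^t_i)^\bm,(\bz^t_j)^\bm)|\le C$. Both strategies are sound: the paper's trick has the advantage of reusing the one-root combinatorics verbatim, while yours is more direct but requires checking (as you rightly flag) that the $\bcx\leftrightarrow\bz$ comparison and the shared-edge / merged-shape counting of Propositions~\ref{z-tz} and~\ref{z-x} extend to mixed moments of two root types; this does go through, since a shared edge forces the merged graph to be connected and the pruning bound \eqref{GKmu} with $M \leq \frak m /2$ still controls the labelling count, with $j$ absorbed as one of the $M$ free non-root labels. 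Two small points: the reduction to a single monomial should be justified via the bilinearity of covariance and Cauchy--Schwarz (variance is quadratic, not linear), and you should note that the error incurred by replacing $\bcx$ with $\bz$ in each covariance term is $O(K^{-1/2})$ uniformly, which after dividing by $K^2$ (respectively $n^2$) and summing over $O(K^2)$ (respectively $n^2$) pairs still vanishes, so the replacement is harmless.
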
 
\begin{proof}
We adapt the proof of \cite[Prop.~5]{bay-lel-mon-15} to our situation. 
Let $\{ \chX_{ij} \}_{1\leq i < j \leq n}$ be a set of real independent random
variables that satisfy the same assumptions as  $\{ X_{ij} \}_{1\leq i < j \leq
n}$. Assume furthermore that these two sets are independent.  Write $\chX_{ji}
= \chX_{ij}$ for $1\leq i < j\leq n$, and let $\chX_{ii} = 0$ for $i\in [n]$. 
Define the $n\times n$ matrix 
$\chX = \begin{bmatrix} \chX_{ij} \end{bmatrix}_{i,j=1}^n$. 

Let $B = [b_{ij}]$ be the $n\times n$ symmetric matrix which first row is
defined as $b_{1j} = \1_{j\in \cS}$ for $j\in[n]$, and which have zeros outside
its first row and first column. 
To establish the first convergence, we build the matrix 
$\chW = \begin{bmatrix} \chW_{ij} \end{bmatrix}_{i,j=1}^n$ defined as 
\[
\chW_{ij} = \frac{1}{\sqrt{K}}  B \odot \chX 
\]
(here we assume without affecting the conclusion of the proposition
that $1 \not\in \cS$). 
We construct a new AMP sequence around the $2n\times 2n$ matrix
$\begin{bmatrix} W & 0 \\ 0 & \chW \end{bmatrix}$, which obviously satisfies
Assumptions~\ref{ass-X} and \ref{ass-A}. The new algorithm, which delivers the 
$2n$-uple $(\bcy^{k}_1,\ldots, \bcy^{k}_{2n})$ at Iteration $k$, is written as 
follows:
$\bcy^0_i = \bcx^0_i$ for $i \in [n]$ and $\bcy^0_i = 0$ otherwise, 
\begin{align*} 
\begin{bmatrix} \ \ (\bcy^{k+1}_1)^\T \ \ \\ \vdots \\ (\bcy^{k+1}_n)^\T \\ 
 0 \\ \vdots \\ 0 \end{bmatrix} = 
\begin{bmatrix} W & 0 \\ 0 & \chW \end{bmatrix} 
\begin{bmatrix} \ \ f(\bcy^{k}_1, 1,k)^\T \ \ \\ \vdots \\ 
  f(\bcy^k_n, n, k)^\T \\ 
 0 \\ \vdots \\ 0 \end{bmatrix} 
- \begin{bmatrix} \ \ f(\bcy^{k-1}_1, 1,k-1)^\T \ \ \\ \vdots \\ 
 f(\bcy^{k-1}_n, n, k-1)^\T \\ 0 \\ \vdots \\ 0 
\end{bmatrix} 
\begin{bmatrix} \\ 
  \sum_{l} W_{i,l}^2 
  \frac{\partial f_r}{\partial y(s)} 
   (\bcy^{k}_l,l,k) \\ \\   
 \end{bmatrix}_{s,r=1}^q, 
\end{align*} 
for $k = 0, \ldots, t-1$, and 
\begin{align*} 
\begin{bmatrix}  0  \\ \vdots \\ 0 \\ 
 (\bcy^{t+1}_{n+1})^\T \\ \vdots \\ \ \ (\bcy^{t+1}_{2n})^\T  \ \ 
  \end{bmatrix} = 
\begin{bmatrix} W & 0 \\ 0 & \chW \end{bmatrix} 
\begin{bmatrix} \ \ 0 \ \ \\ \vdots \\ 0 \\ 
 [ \psi(\bcy^t_1,1), 0,\ldots, 0 ]  \\ \vdots \\ 
 [ \psi(\bcy^t_n,n), 0,\ldots, 0 ] \end{bmatrix} 
\end{align*} 
(next iterations are irrelevant). This sequence enters the framework of 
Proposition~\ref{amp-vec}. 

It is clear that $\bcy^k_i = \bcx^k_i$ for $k \in [t]$ and $i \in [n]$, and 
therefore, 
\[
\bcy^{t+1}_{n+i}(1) = \sum_{l\in[n]} \chW_{il} \psi(\bcx^t_l,l) 
 \quad \text{for } i \in [n]. 
\]
Set $i = 1$. On the one hand, we have 
\begin{align} 
\EE \bcy^{t+1}_{n+1}(1)^4 &= 
  \sum_{l_1l_2l_3l_4 \in \cS} 
  \EE \chW_{1 l_1} \chW_{1 l_2} \chW_{1 l_3} \chW_{1l_4} 
 \EE \psi(\bcx^t_{l_1},l_1) \psi(\bcx^t_{l_2},l_2) 
  \psi(\bcx^t_{l_3},l_3) \psi(\bcx^t_{l_4},l_4) \nonumber \\
 &= \frac{3}{K^2} \sum_{l_1,l_2 \in \cS}  
 \EE \psi(\bcx^t_{l_1},l_1)^2 \psi(\bcx^t_{l_2},l_2)^2 + \varepsilon 
\label{4momy} 
\end{align} 
where $|\varepsilon| \leq C / K$. On the other hand, 
Propositions~\ref{z-tz}, \ref{z-y}, \ref{z-x} and~\ref{cvg-y} applied to our
new AMP sequence show that 
\[
\EE \bcy^{t+1}_{n+1}(1)^4 - \EE (\widetilde U^{t+1})^4 \tolong 0, 
\]
where $\widetilde U^{t+1} \sim \cN(0, (\sigma^{t+1})^2)$, with 
\[
(\sigma^{t+1})^2 = \frac 1K \sum_{l \in \cS} \EE\psi(U^t_l,l)^2 , 
\]
By the Gaussianity and centeredness of $\widetilde U^{t+1}$, we thus have 
\[
\EE \bcy^{t+1}_{n+1}(1)^4 - 
 \frac{3}{K^2} \Bigl( \EE \sum_{l \in \cS} \psi(U^t_l,l)^2 \Bigr)^2
 \tolong 0, 
\]
and since $\max_{i\in[n]} | \EE (\bcx^t_i)^\bm - \EE (U^t_i)^\bm | \to_n 0$, 
we get that 
\[
\EE \bcy^{t+1}_{n+1}(1)^4 - 
 \frac{3}{K^2} \Bigl( \EE \sum_{l\in\cS} \psi(\bcx^t_l,l)^2 \Bigr)^2
 \tolong 0, 
\]
Combining this convergence with~\eqref{4momy}, we obtain the first convergence
stated by our proposition for polynomials of the type $\psi(\cdot,l)^2$. 
To obtain this convergence for arbitrary polynomials, write 
$\Psi(\bx, l) = ( 1 + \varepsilon \psi(\bx, l))$ for $\varepsilon > 0$. 
Since 
\[
\frac{1}{\varepsilon^2} 
\var\Bigl(\frac 1K \sum_{i \in \cS} \Psi(\bcx^t_i,i)^2 \Bigr) = 
\var\Bigl(\frac{\varepsilon}{K} \sum_{i \in \cS} \psi(\bcx^t_i,i)^2 
 + \frac 2K \sum_{i \in \cS} \psi(\bcx^t_i,i) 
\Bigr) 
\]
must vanish for all $\varepsilon > 0$, we get the convergence~\eqref{var-S}. 

To establish the other convergence in the statement, let 
$\cS_i^{(n)} = \{ i +1, ((i+1)\!\!\mod n)+1, \ldots, 
((i+K_n-1)\!\!\mod n)+1 \}$ for $i\in \{0,\ldots, n-1 \}$. 
Then, we have that 
\[
\forall l \in [n], \quad 
  \psi(\bcx^t_l,l) = \frac{1}{K} \sum_{i=0}^{n-1} \psi(\bcx^t_l,l) 
 \1_{l \in S_i} .
\]
Therefore, writing 
\[
\frac 1n \sum_{l\in[n]} \psi(\bcx^t_l,l) = 
 \frac 1n \sum_{i=0}^{n-1} \frac{1}{K} \sum_{l\in S_i} \psi(\bcx^t_l,l), 
\]
we can use Minkowski's inequality along with the convergence~\eqref{var-S} 
to show that the variance of the left hand side converges to zero. 
\end{proof} 

The convergences~\eqref{cvg-vec} follow at once from 
Propositions~\ref{cvg-mean} and~\ref{varphi}. Proposition~\ref{amp-vec} is 
proven.

\subsubsection{Proof of Proposition~\ref{amp-pscal}}  

To establish Proposition~\ref{amp-pscal} for a given fixed $t > 0$, we
apply Proposition~\ref{amp-vec} with $q = t$ and a properly designed sequence 
of functions $f(\cdot,\cdot,0), \ldots, f(\cdot,\cdot,t-1)$, along the
idea used in \cite[Proof of Th.~5]{bay-lel-mon-15}. 

For $k = 0,\ldots, t-1$ define the function $f(\bx, i, k)$ of the statement of
Proposition~\ref{amp-vec} as follows.  Consider the initial vector 
$x^0$ in Algorithm~\eqref{amp-posc} as a constant parameter vector.  For 
$\bx = \left[ x(1),\ldots, x(t) \right]$, set  
\[
\begin{matrix} 
f(\bx, i, 0)^\T &=& [\! & p(x^0_i, i, 0) & 0              & \ldots & 0 & \!] \\
f(\bx, i, 1)^\T &=& [\! & p(x^0_i, i, 0) & p( x(1), i, 1) & \ldots & 0 & \!] \\
\vdots \\
f(\bx, i,t-1)^\T &=& [\! & p( x^0_i, i, 0) & p( x(1), i, 1) &\ldots & 
  p(x(t-1), i, t-1) & \!]  . 
\end{matrix} 
\]
(note that $f(\cdot, i, 0)$ is a polynomial with degree zero). 
With this construction, if we start Algorithm~\eqref{amp-posc} with the initial
value $x^0$ and Algorithm~\eqref{ampol} with an arbitrary initial value, 
then we can easily show by recurrence on the first $t$ iterations running in 
parallel for both algorithms that 
\[
\begin{matrix} 
(\bcx^1_i)^\T &=& [\! & \cx^1_i & 0       & \ldots & 0 & \!]  \\
(\bcx^2_i)^\T &=& [\! & \cx^1_i & \cx^2_i & \ldots & 0 & \!]  \\
\vdots \\
(\bcx^t_i)^\T &=& [\! & \cx^1_i & \cx^2_i & \ldots &\cx^t_i & \!]  . 
\end{matrix} 
\]
We also notice that 
\[
\forall i \in [n], \quad Q^t_i = \chR^t_i .
\]
With this at hand, Proposition~\ref{amp-pscal} follows from 
Proposition~\ref{amp-vec}. 

In order to deduce Theorems~\ref{th-amp-lip} and~\ref{th-amp-gal} from
Proposition~\ref{amp-pscal}, we now need to approximate the activation
functions present in the statements of these theorems with polynomials. The
next subsection is devoted to this purpose.

\subsection{From polynomial to general activation functions} 
 
In all this subsection, $h : \RR\times \cQ_\eta\times \NN \to \RR$ is a
function that complies with either Assumption~\ref{ass:h-lip} or
Assumption~\ref{ass:h-gal}. 

The proof of the following lemma makes use of the density of the polynomials
in the Hilbert space $L^2(\RR,\nu)$ when $\nu$ is a Gaussian measure. 
Density arguments of this kind have been used in the AMP literature in, 
\emph{e.g.}, \cite{dud-lu-sen-(arxiv)22,wan-zho-fan-(arxiv)22}. 
\begin{lemma}
\label{approx-hg}
Fix $t$, and fix two positive numbers $0 < \sigma_{\min}^2 \leq
\sigma_{\max}^2$. Let $\varepsilon > 0$ be an arbitrarily small number. Then,
there exists a function $g_\varepsilon(\cdot,\cdot,t) : \RR \times \cQ_\eta 
 \to \RR$ that satisfies the following properties: 
For each $\ueta\in\cQ_\eta$, the function $g_\varepsilon(\cdot,\ueta,t)$ is a
polynomial. Denoting as $\partial g_\varepsilon$ the derivative of 
$g_\varepsilon$ with respect to the first parameter, the inequalities
\[
\EE (h(\uxi,\ueta,t) - g_\varepsilon(\uxi,\ueta,t))^2 \leq
\varepsilon, \quad \text{and} \quad 
| \EE [\partial h(\uxi,\ueta,t) - \partial g_\varepsilon(\uxi,\ueta,t)] | 
  \leq \varepsilon
\]
hold true for each random variable $\uxi \sim \cN(0, \sigma^2)$ with 
$\sigma^2 \in [ \sigma_{\min}^2, \sigma_{\max}^2 ]$ and each 
$\ueta\in\cQ_\eta$. 
Finally, the function $p^{(n)}(\ux,i,t) = g_\varepsilon(\ux, \eta^{(n)}_i, t)$ 
satisfies the assumptions of Proposition~\ref{amp-pscal}. 
\end{lemma}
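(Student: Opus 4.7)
The plan is to construct $g_\varepsilon(\cdot,\ueta,t)$ as a piecewise-polynomial function of $\ueta$: cover the compact set $\cQ_\eta$ by finitely many balls, assign a single polynomial to each, and define $g_\varepsilon(\ux,\ueta,t)$ by a nearest-point rule. To obtain the $L^2$ estimate uniformly in $\sigma^2\in[\sigma_{\min}^2,\sigma_{\max}^2]$, I first reduce to approximation in the single reference Gaussian space $L^2(\RR,\cN(0,\sigma_{\max}^2))$ via a density-ratio argument. The derivative bound will then follow from Stein's lemma, and the bounded-coefficient requirement of Proposition~\ref{amp-pscal} will hold automatically because $g_\varepsilon$ takes only finitely many polynomial values.

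For the reduction to a reference measure, note that for $\sigma\in[\sigma_{\min},\sigma_{\max}]$ the Radon--Nikodym derivative
\[
\frac{d\cN(0,\sigma^2)}{d\cN(0,\sigma_{\max}^2)}(\ux)
= \frac{\sigma_{\max}}{\sigma}\exp\!\Bigl(\tfrac{\ux^2}{2}\bigl(\tfrac{1}{\sigma_{\max}^2}-\tfrac{1}{\sigma^2}\bigr)\Bigr)
\]
is bounded by $\sigma_{\max}/\sigma_{\min}$, since the exponential factor is at most $1$. Hence $\EE_{\cN(0,\sigma^2)}[f(\uxi)^2]\le (\sigma_{\max}/\sigma_{\min})\,\EE_{\cN(0,\sigma_{\max}^2)}[f(\uxi)^2]$ for any measurable $f$, so it suffices to achieve the approximation under the single measure $\cN(0,\sigma_{\max}^2)$, at the cost of a harmless constant factor.

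Next I handle the $\ueta$-dependence. Assumption~\ref{ass-h} gives $h(\cdot,\ueta,t)\in L^2(\cN(0,\sigma_{\max}^2))$ for each $\ueta$, with norm bounded uniformly in $\ueta\in\cQ_\eta$. Since polynomials are dense in $L^2(\cN(0,\sigma_{\max}^2))$ (Hermite expansion), for each $\ueta_0\in\cQ_\eta$ one can choose a polynomial $g_{\ueta_0}$ with $\EE_{\cN(0,\sigma_{\max}^2)}[(h(\uxi,\ueta_0,t)-g_{\ueta_0}(\uxi))^2]\le\varepsilon'$, for any $\varepsilon'>0$ to be fixed later. Using the modulus of continuity $\kappa$ from Assumption~\ref{ass-h}, I pick $\delta>0$ with $\kappa(\delta)^2\EE_{\cN(0,\sigma_{\max}^2)}[(1+|\uxi|^m)^2]\le\varepsilon'$, and by compactness cover $\cQ_\eta$ with finitely many balls $B(\ueta_1,\delta),\dots,B(\ueta_N,\delta)$. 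Define $g_\varepsilon(\ux,\ueta,t):=g_{\ueta_{k(\ueta)}}(\ux)$, where $k(\ueta)$ is the index of any ball containing $\ueta$. For $\ueta\in B(\ueta_k,\delta)$, the triangle inequality in $L^2(\cN(0,\sigma_{\max}^2))$ and Assumption~\ref{ass-h} yield
\[
\bigl(\EE_{\cN(0,\sigma_{\max}^2)}[(h(\uxi,\ueta,t)-g_\varepsilon(\uxi,\ueta,t))^2]\bigr)^{1/2}\le 2\sqrt{\varepsilon'},
\]
and a suitable choice of $\varepsilon'$ (absorbing the factor $\sigma_{\max}/\sigma_{\min}$ from the reduction) delivers $\EE_{\cN(0,\sigma^2)}[(h-g_\varepsilon)^2]\le\varepsilon$ uniformly in $\sigma$ and $\ueta$.

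For the derivative bound, Stein's lemma (invoked just before~\eqref{amp-u}) gives $\sigma^2\EE\partial h(\uxi,\ueta,t)=\EE[\uxi h(\uxi,\ueta,t)]$ and the analogous identity for $g_\varepsilon$; subtraction and Cauchy--Schwarz produce
\[
\bigl|\EE\partial h(\uxi,\ueta,t)-\EE\partial g_\varepsilon(\uxi,\ueta,t)\bigr|
\le \sigma^{-1}\bigl(\EE[(h-g_\varepsilon)^2]\bigr)^{1/2}
\le \sigma_{\min}^{-1}\sqrt{\varepsilon},
\]
which is brought below any prescribed tolerance by a further shrinking of $\varepsilon$ upstream. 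Since $g_\varepsilon(\cdot,\ueta,t)$ takes only the $N$ polynomial values $g_{\ueta_1},\dots,g_{\ueta_N}$, each of fixed degree and fixed coefficients, the polynomial $p^{(n)}(\ux,i,t)=g_\varepsilon(\ux,\eta^{(n)}_i,t)$ satisfies the coefficient bound~\eqref{bnd-ag} with a constant depending only on $\varepsilon$. The main delicate point is the joint uniformity in $\sigma$ and $\ueta$; it is resolved by the density-ratio argument for $\sigma$ and by the compactness of $\cQ_\eta$ combined with the modulus-of-continuity hypothesis for $\ueta$. Once both uniformities are in place, the derivative bound and the coefficient bound come essentially for free.
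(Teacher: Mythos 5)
Your proof is correct and follows essentially the same route as the paper's: a $\delta$-net of $\cQ_\eta$ chosen through the modulus of continuity $\kappa$, density of polynomials in $L^2(\cN(0,\sigma_{\max}^2))$ via Hermite expansions, a Radon--Nikodym density-ratio bound to transfer the $L^2$ estimate from $\sigma_{\max}^2$ to every $\sigma^2\in[\sigma_{\min}^2,\sigma_{\max}^2]$, Stein's lemma plus Cauchy--Schwarz for the derivative bound, and the observation that $g_\varepsilon$ takes only finitely many polynomial values to verify the coefficient hypothesis of Proposition~\ref{amp-pscal}. The only cosmetic difference is that you state the reduction to the reference Gaussian before introducing the net, whereas the paper does these two steps in the opposite order.
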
 
\begin{proof}
Let $\varepsilon' > 0$ be a small number to be set at the end of the
proof. Let $\kappa$ be the function from Assumption~\ref{ass:h-lip}
or~\ref{ass:h-gal}. If Assumption~\ref{ass:h-lip} is chosen, set $m=1$, 
otherwise, let $m$ be the integer specified in Assumption~\ref{ass:h-gal}.
Define $\delta > 0$ as 
\[
\delta = \max \Bigl\{ e \in ( 0, D_{\cQ_\eta}], \, : \, 
 \kappa(e)^2 \leq \frac{\varepsilon'}{\EE (1 + |\xi|^m)^2} \Bigr\}, 
\]
where $\xi\sim\cN(0,\sigma_{\max}^2)$, and where $D_{\cQ_\eta}$ is the diameter
of $\cQ_\eta$. Since $\cQ_\eta$ is compact, it contains a $\delta$--net with
finite cardinality.  Let $\cS \subset \cQ_\eta$ be a $\delta$--net of
$\cQ_\eta$ with the smallest cardinality, and write $M = |\cS|$.  Let $\phi :
\cQ_\eta \to \cS$ be such that $\phi(\ueta)$ is the closest element in $\cS$ to
$\ueta$ if this closest element is unique, and $\phi(\ueta)$ is the closest
element smaller than $\ueta$ if not.  Denoting as $\bs\eta_1 < \bs\eta_2 < 
\cdots < \bs\eta_M$ the elements of $\cS$, define the function 
$\psi : \cQ_\eta \to [M]$ as $\phi(\ueta) = \bs\eta_{\psi(\ueta)}$.  With these 
definitions, we have 
$\EE (h(\xi,\ueta,t) - h(\xi,\phi(\ueta),t))^2 \leq \varepsilon'$. 

It is now well-known that the polynomials are dense in the space 
$L^2(\RR, \nu)$ when $\nu$ is a Gaussian distribution. Therefore, there
are $M$ polynomials $P(\cdot, l,t)$ such that 
$\EE (h(\xi, \bs\eta_l,t) - P(\xi,l,t))^2 \leq \varepsilon'$ for each 
$l \in [M]$. Fixing $l$, let 
$\varphi(\ux) = (h(\ux, \bs\eta_l,t) - P(x,l,t))^2$. For 
$\uxi \sim \cN(0, \sigma^2)$ with 
$\sigma^2 \in [ \sigma_{\min}^2, \sigma_{\max}^2 ]$, we have 
\[
\EE (h(\uxi, \bs\eta_l,t) - P(\uxi,l,t))^2 = 
 \frac{1}{\sqrt{2\pi \sigma^2}} \int \varphi(\ux) e^{-\frac{\ux^2}{2\sigma^2}} 
  d\ux 
 \leq \frac{\sigma_{\max}}{\sigma_{\min}} 
 \frac{1}{\sqrt{2\pi \sigma_{\max}^2}} 
  \int \varphi(\ux) e^{-\frac{\ux^2}{2\sigma_{\max}^2}} d\ux = 
 \frac{\sigma_{\max}}{\sigma_{\min}} \varepsilon' .
\]
Putting things together, we obtain that 
\begin{align*} 
\EE(h(\uxi,\ueta,t) - P(\uxi,\psi(\ueta),t))^2 &\leq 
2 \EE(h(\uxi,\ueta,t) - h(\uxi,\phi(\ueta),t))^2 + 
2 \EE(h(\uxi,\phi(\ueta),t) - P(\uxi,\psi(\ueta),t))^2 \\
 &\leq  2 ( 1 + \sigma_{\max} / \sigma_{\min} ) \varepsilon' . 
\end{align*} 
We also have by Stein's lemma that 
\[
\EE [\partial h(\uxi,\ueta,t) - \partial_{\uxi} P(\uxi,\psi(\ueta),t)] = 
 \frac{1}{\sigma^2} \EE [\uxi ( h(\uxi,\ueta,t) - P(\uxi,\psi(\ueta),t))] . 
\]
Thus, we obtain by Cauchy-Schwarz and the previous result that 
\[
| \EE [\partial h(\uxi,\ueta,t) - \partial_{\uxi} P(\uxi,\psi(\ueta),t)] | \leq 
 \frac{\sigma_{\max}}{\sigma_{\min}^2} 
 \sqrt{2 ( 1 + \sigma_{\max} / \sigma_{\min} )} \sqrt{\varepsilon'} , 
\]
By adjusting $\varepsilon'$, we thus obtain that 
\[
\EE (h(\uxi,\ueta,t) - P(\uxi,\psi(\ueta),t))^2 \leq
\varepsilon, \quad \text{and} \quad 
| \EE [\partial h(\uxi,\ueta,t) - \partial_{\uxi} P(\uxi,\psi(\ueta),t)] | 
  \leq \varepsilon, 
\]
and it remains to set $g_\varepsilon(\ux,\ueta,t) = P(\ux,\psi(\ueta),t)$. 
We also notice that the polynomials $p^{(n)}(\ux, i, t) = 
 g_\varepsilon(\ux,\ueta_i^{(n)},t)$ have bounded degrees and bounded 
coefficients, and thus comply with the statement of 
Proposition~\ref{amp-pscal}. 
\end{proof} 
\begin{lemma}
\label{ge} 
There exists a constant $c > 0$ such that 
$R_i(t,t) \geq c$ for all $i\in[n]$ and $t\in [\tmax]$. Let $e > 0$ be a
small number. Then, there exists a set of $\RR\times\cQ_\eta \to \RR$ functions 
$\{g_e(\cdot,\cdot,t)\}_{t = 0}^{\tmax-1}$ such that 
for each $\eta\in\cQ_\eta$, the function $g_e(\cdot,\eta,t)$ is a
polynomial, and furthermore,
\[
 \forall i \in [n], \quad 
| h(x^0_i,\eta_i,0) - g_e(x^0_i,\eta_i,0)| \leq e, 
\]
and $\forall t \in[\tmax-1]$, $\forall i \in [n]$, 
\[
\EE (h(Z^t_i,\eta_i,t) - g_e(Z^t_i,\eta_i,t))^2 \leq e, 
  \quad 
| \EE [\partial h(Z^t_i,\eta_i,t) - \partial g_e(Z^t_i,\eta_i,t)] | 
  \leq e, 
\]
and the functions $p(x,i,t) = g_e(x,\eta_i,t)$ defined for 
$0\leq t\leq \tmax - 1$ satisfy the assumptions of Proposition~\ref{amp-pscal}. 

Furthermore, there exists a non-negative function $\delta(e)$ on a small
interval $(0,\varepsilon)$ that converges to zero as $e \to 0$, and that
satisfies the following property. Given $e \in (0,\varepsilon)$, construct the function $g_e$
as above, and let $(\cbZ^t = [\cZ^t_i]_{i=1}^n )_{t\in[\tmax]}$ be the finite
sequence of centered Gaussian vectors which distribution is determined by the
$(S,g_e,\eta,x^0)$--state evolution equations stopped at $\tmax$, and leading
to the covariance matrices $\{\chR^{\tmax}_i\}_{i\in[n]}$.  Then, 
$\| R^{\tmax}_i - \chR^{\tmax}_i \| \leq \delta(e)$ for all $i\in[n]$. In
particular, 
\begin{equation}
\label{bnd-cR} 
\sup_n \max_{i\in[n]} \| \chR^{\tmax}_i \| < \infty. 
\end{equation}  
\end{lemma}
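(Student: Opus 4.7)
The plan is to handle the three assertions of the lemma in sequence: the uniform lower bound on $R^{(n),t}_i(t,t)$, the construction of the polynomial family $\{g_e(\cdot,\cdot,t)\}_{t=0}^{\tmax-1}$ via Lemma~\ref{approx-hg}, and the propagation estimate $\|R^{(n),\tmax}_i - \chR^{(n),\tmax}_i\| \leq \delta(e)$ obtained by comparing the two state-evolution recursions driven by $h$ and by $g_e$.

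\textbf{Uniform lower bound.} I would proceed by induction on $t$, proving the existence of $c_t > 0$ with $R^{(n),t}_i(t,t) \geq c_t$ for all $n$ and $i \in [n]$. At $t=1$, $R^{(n),1}_i(1,1) = \sum_l s^{(n)}_{il} h(x^{(n),0}_l,\eta^{(n)}_l,0)^2$. The $K_n$-subset condition of Assumption~\ref{nondeg}, applied to the $K_n$ indices of $[n]$ with smallest values of $h(x^{(n),0}_\cdot,\eta^{(n)}_\cdot,0)^2$, forces all but at most $K_n - 1$ indices to satisfy $h(x^{(n),0}_l,\eta^{(n)}_l,0)^2 \geq c_{\text{h}}(0)$. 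Combined with $\sum_l s^{(n)}_{il} \geq c_S$, $s^{(n)}_{il} \leq C/K_n$, and the support size bound from Assumption~\ref{ass-A}, the contribution of the ``bad'' indices to the weighted sum is controlled and yields a strictly positive $c_1$. The inductive step rests on the identity $R^{(n),t+1}_i(t+1,t+1) = \sum_l s^{(n)}_{il}\EE h(Z^{(n),t}_l,\eta^{(n)}_l,t)^2$ with $Z^{(n),t}_l \sim \cN(0,R^{(n),t}_l(t,t))$; the induction hypothesis places this variance in a compact interval $[c_t,\sigma_{\max}^2]$ (upper bound from~\eqref{bnd-R}), and a rescaling argument relates $\EE h(\sigma \uxi,\cdot,t)^2$ for $\sigma$ in that interval to the standard-Gaussian functional in Assumption~\ref{nondeg}.

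\textbf{Polynomial family.} For each $t \in \{1,\ldots,\tmax - 1\}$, apply Lemma~\ref{approx-hg} directly with $\sigma_{\min}^2 = \min_{1 \leq t \leq \tmax} c_t$ and $\sigma_{\max}^2$ coming from~\eqref{bnd-R}. For $t = 0$, the required pointwise bound is obtained by a uniform approximation of $h(\cdot,\cdot,0)$ on the compact rectangle $\cQ_x \times \cQ_\eta$: discretize $\cQ_\eta$ with a $\delta$-net and use the Lipschitz-in-$\ueta$ control from Assumption~\ref{ass-h} as in the proof of Lemma~\ref{approx-hg}, then apply the Weierstrass theorem to the continuous function $h(\cdot,\bs\eta_l,0)$ on $\cQ_x$ at each net point. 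In both cases the resulting polynomials have bounded degree and bounded coefficients, so the prescription $p^{(n)}(\ux,i,t) = g_e(\ux,\eta^{(n)}_i,t)$ meets the hypotheses of Proposition~\ref{amp-pscal}.

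\textbf{Error propagation.} Using the identity $R^{(n),t+1}_i - \chR^{(n),t+1}_i = \sum_l s^{(n)}_{il} (\Xi^{(n),t}_l - \widecheck\Xi^{(n),t}_l)$, show by induction on $t$ that $\max_i \|R^{(n),t}_i - \chR^{(n),t}_i\| \leq \delta_t(e)$ with $\delta_t(e) \to 0$ as $e \to 0$. Each entry of $\Xi^{(n),t}_l - \widecheck\Xi^{(n),t}_l$ is decomposed into (i) the difference obtained by replacing $h$ with $g_e$ while keeping the Gaussian law $\cN(0,R^{(n),t}_l)$, controlled by the $L^2$ approximation bound of Lemma~\ref{approx-hg} and Cauchy--Schwarz and contributing $O(\sqrt e)$, and (ii) the difference obtained by keeping $g_e$ but replacing the covariance $R^{(n),t}_l$ by $\chR^{(n),t}_l$, controlled by the continuity of the Gaussian moments of the fixed polynomial $g_e$ in the covariance matrix on the compact spectral range provided by~\eqref{bnd-R}, contributing a term continuous in $\delta_t(e)$ and vanishing at $\delta_t(e) = 0$. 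Iterating over the $\tmax$ steps yields the required $\delta(e)$, and~\eqref{bnd-cR} follows from the triangle inequality and~\eqref{bnd-R}. The main subtlety is at this last step: the degree and coefficients of $g_e$ depend on $e$, so the modulus of continuity in part~(ii) deteriorates as $e \to 0$; one must verify that the $O(\sqrt e)$ approximation term is not overwhelmed when compounded over the $\tmax$ fixed iterations, which works because for each fixed $e$ the polynomial $g_e$ and all associated constants are finite, and the approximation error is injected additively at each stage.
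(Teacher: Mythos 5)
Your strategy for all three parts matches the paper's in outline, and the first two parts (lower bound by induction, polynomial construction via Lemma~\ref{approx-hg} for $t\geq 1$ and Weierstrass on the compact $\cQ_x\times\cQ_\eta$ for $t=0$) are essentially right. The genuine gap is in your error-propagation step, specifically in the covariance-swap term~(ii).

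You swap the covariance $R^{(n),t}_l \to \chR^{(n),t}_l$ \emph{while keeping the $e$-dependent polynomial $g_e$}. The modulus of continuity you invoke is therefore that of the map $\Sigma \mapsto \EE\bigl[g_e\cdot g_e\bigr]$ under $\cN(0,\Sigma)$, which depends on the degree and coefficients of $g_e$, hence on $e$. Since $h$ is not a polynomial, the degree (or coefficient size) of $g_e$ must diverge as $e\to 0$, so this modulus deteriorates — as you correctly flag. The recursion you obtain has the shape $\delta_{t+1}(e)\lesssim \sqrt{e}+\omega_{g_e}\bigl(\delta_t(e)\bigr)$, and concluding $\delta_{t+1}(e)\to 0$ requires a quantitative comparison between the blow-up rate of $\omega_{g_e}$ and the decay rate of $\delta_t(e)$. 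Lemma~\ref{approx-hg} furnishes no such control, and your remark that "for each fixed $e$ the polynomial $g_e$ and all associated constants are finite" only gives $\delta(e)<\infty$, not $\delta(e)\to 0$. As it stands, the induction does not close.

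The paper avoids this by reversing the order of the two swaps. It first replaces the covariance $R^{(n),t}_l$ by $\chR^{(n),t}_l$ \emph{keeping $h$}; this term is controlled by the uniform continuity of the fixed ($e$-independent) function $(\ueta,\sigma^2,\check\sigma^2)\mapsto\EE h(\sigma\uxi,\ueta,t)-\EE h(\check\sigma\uxi,\ueta,t)$, $\uxi\sim\cN(0,1)$, on a compact set on which it vanishes along the diagonal $\sigma^2=\check\sigma^2$. Only afterwards does it swap $h\to g_e$, but now under the already-perturbed covariance $\chR^{(n),t}_l$, whose diagonal has been placed (for small $e$, via the induction hypothesis) in the interval $[\sigma_{\min}^2,\sigma_{\max}^2]$ where Lemma~\ref{approx-hg}'s $L^2$ bound applies. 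This ordering makes the $\sqrt{e}$-type terms additive while the covariance-swap modulus is fixed once and for all, which is what lets $\delta(e)\to 0$ come out without any quantitative control on $g_e$'s coefficients. You should adopt this decomposition.

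A secondary remark on the lower bound $R_i(1,1)\geq c$: your deduction that at most $K_n-1$ indices satisfy $h(x^0_l,\eta_l,0)^2<c_{\mathrm h}(0)$ is correct, but the follow-up "the contribution of the bad indices is controlled" is not automatic. Those indices may carry total weight up to $(K_n-1)\,C/K_n$ in row $i$ of $S$, and the assumptions place no relation between that constant $C$ and $c_S$; in particular $\sum_{l\ \mathrm{good}}s_{il}\geq c_S-C$ can be negative, so your chain of inequalities does not produce a positive $c_1$ in general. The paper's own discussion of this step is terse, so it is not a gap unique to your write-up, but as written the argument does not follow from Assumptions~\ref{ass-A} and~\ref{nondeg} alone.
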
 

\begin{proof} 
Recall the notations introduced in Assumptions~\ref{ass-A} and~\ref{nondeg},
and recall that $\cK_i = \{ j \in [n], \ s_{ij} > 0 \}$.  We first show that
for each $i\in [n]$, the set $\cI_i \subset \cK_i$ defined as 
$\cI_i = \{ j\in[n], \ s_{ij} > c_S / (2 C_{\text{card}} K) \}$ satisfies
$|\cI_i| > \alpha_S K$.  Using Assumptions~\ref{ass-A} and~\ref{nondeg}, we 
indeed have 
\[
c_S \leq \sum_{j\in\cK_i} s_{ij} = \sum_{j\in\cI_i} s_{ij} 
 + \sum_{j\in\cK_i\setminus\cI_i} s_{ij} < \frac{C_S}{K} |\cI_i| 
 + \frac{c_S}{2 C_{\text{card}} K}( C_{\text{card}} K - | \cI_i | ), 
\]
and the result is obtained by rearranging. 

Using Assumption~\ref{nondeg} again, we then have 
$R_i(1,1) = \sum_l s_{il} h(x^0_l, \eta_l, 0)^2 \geq c > 0$ for each $i\in[n]$,
by lower bounding the sum to $l \in \cI_i$. 
Assuming that $R_i(t,t) \geq c > 0$ for each $i\in[n]$, we also have that
$R_i(t+1,t+1) = \sum_l s_{il} \EE h(Z^t_l, \eta_l, t)^2 \geq c > 0$: This can
be checked by noticing from Assumption~\ref{nondeg} and the properties of $h$
that there exists $c > 0$ such that 
$\sum_{l} s_{il} \EE h(\uxi,\eta_l,t)^2 \geq c_{\text{h}}(t)$ when
$\uxi\sim\cN(0,1)$, and by making standard Gaussian calculations to modify the
variance of $\uxi$ and the bound $c_{\text{h}}(t)$.  This establishes the first
result of the lemma. 

With this result, the existence of the functions 
$\{g_e(\ux,\ueta,t) \}_{t = 1}^{\tmax - 1}$ follows at once from
Lemma~\ref{approx-hg}, with the numbers $\sigma^2_{\min}$ and $\sigma^2_{\max}$
in the statement of that lemma chosen as 
$\sigma^2_{\min} = \min_{i,t} R_i(t,t) / 2$ and $\sigma^2_{\max} = 2 \max_{i,t}
R_i(t,t)$.  As regards $g(\ux, \ueta, 0)$, we can use Assumption~\ref{x0}
and either Assumption~\ref{ass:h-lip} or Assumption~\ref{ass:h-gal},  
and invoke the Stone-Weierstrass theorem to choose
$g(\cdot,\cdot, 0)$ as a multivariate polynomial that satisfies
$\max_{(\ux,\ueta)\in\cQ_x \times\cQ_\eta} | g(\ux,\ueta,0)
- h(\ux,\ueta,0) | \leq e$. It is readily checked that $p(\ux,l,0) =
  g_e(\ux,\ueta_l, 0)$ complies with the assumptions of
Proposition~\ref{amp-pscal}. 

We show the last result by recurrence on $t$.  
We first have $|R^1_i - \chR^1_i| \leq \sum_i s_{il} (h(x^0_l,\eta_l,0)^2 -
g_e(x^0_l,\eta_l,0)^2) \leq C e^2$. Assume that there exists a non-negative
function $\delta_t(e)$ that converges to zero as $e \to 0$, and such that 
$\| R^t_i - \chR^t_i \| \leq \delta_t(e)$. We have 
\begin{align*} 
| R^{t+1}_i(t+1,1) - \chR^{t+1}_i(t+1,1) | 
    &\leq \sum_l s_{il} | \EE h(Z^t_l,\eta_l, t) h(x^0_l,\eta_l, 0) - 
     \EE g_e(\cZ^t_l,\eta_l, t) g_e(x^0_l,\eta_l, 0) | \\
&\leq \sum_l s_{il} | \EE [h(Z^t_l,\eta_l, t) - h(\cZ^t_l,\eta_l, t) ] | \, 
   | h(x^0_l,\eta_l, 0) | \\
&\phantom{=} + \sum_l s_{il} | \EE h(\cZ^t_l,\eta_l, t) | \,  
   |  h(x^0_l,\eta_l, 0) - g_e(x^0_l,\eta_l, 0) | \\ 
&\phantom{=} + \sum_l s_{il} 
   | \EE [ h(\cZ^t_l,\eta_l, t) - g_e(\cZ^t_l,\eta_l, t) ] | \,  
   | g_e(x^0_l,\eta_l, 0) | . 
\end{align*} 
Using Assumptions~\ref{x0}, \ref{eta}, any of the two 
Assumptions~\ref{ass:h-lip} or~\ref{ass:h-gal}, the bound~\eqref{bnd-R}, and 
the recurrence assumption, we
obtain that the term at the second line can be bounded by some non-negative
function $\delta_{t+1}(e)$ converging to zero with $e$.  Indeed, when 
the triple $(\ueta,\sigma^2,\check\sigma^2)$ belongs to a compact set, 
the function $\varphi(\ueta, \sigma^2, \check\sigma^2) = \EE h(\uxi,\ueta,t) 
  - \EE h(\check\uxi,\ueta,t)$ with $\uxi \sim \cN(0,\sigma^2)$ and 
$\check\uxi \sim \cN(0, \check\sigma^2)$ is (uniformly) continuous on this
compact, and vanishes on the set $\{ (\ueta, \sigma^2, \check\sigma^2) 
\, : \, \sigma^2 = \check\sigma^2 \}$. 
One can readily check that the terms at the third and fourth line of the 
previous display can be bounded by $C e$ and $C \sqrt{e}$ respectively. We 
thus obtain that 
$| R^{t+1}_i(t+1,1) - \chR^{t+1}_i(t+1,1) | \leq \delta_{t+1}(e)$, by
possibly modifying the function $\delta_{t+1}$. A similar treatment can
be applied to $R^{t+1}_i(t+1,k) - \chR^{t+1}_i(t+1,k)$ for $k=2,\ldots,t+1$,
leading to $\| R_i^{t+1} - \chR_i^{t+1}\| \leq \delta_{t+1}(e)$ by 
possibly modifying $\delta_{t+1}(e)$ once again. This leads to the required
result. 
\end{proof} 

We now use the previous lemma for $e >0$ small to construct a polynomial AMP
sequence.  Starting with $\cx^0 = x^0$, this sequence denoted as
$(\cx^{t})_{t\in[\tmax]}$ is given as 
\begin{equation} 
\label{amp-ge} 
\cx^{t+1} = 
 W g_e(\cx^{t},\eta,t) - 
 \diag \Bigl(W^{\odot 2} \partial g_e(\cx^{t},\eta,t) \Bigr)  
   g_e(\cx^{t-1},\eta,t-1) . 
\end{equation} 
Proposition~\ref{amp-pscal} can be applied to this sequence, which implies in
particular that the convergences~\eqref{cvg-pscal} hold true for polynomial
test functions.  We now show that the polynomial test function can be replaced
with a continuous function that increases at most polynomially near the
infinity. 
\begin{lemma}
\label{poly->qque} 
Consider the polynomial AMP sequence $(\cx^{t})_{t\in[\tmax]}$ provided by
Algorithm~\eqref{amp-ge}.  Let the $n$--uple $(\beta_1,\ldots,\beta_n)$ be
as in the statements of Theorems~\ref{th-amp-lip} or~\ref{th-amp-gal}. Let 
$\varphi : \cQ_\eta \times \RR^{\tmax} \to \RR$ be a continuous function such
that $|\varphi(\alpha, u_1, \ldots, u_\tmax) | 
 \leq C(1 + |u_1|^m + \cdots + |u_\tmax|^m)$ for a given arbitrarily integer 
$m > 0$. Then, 
\[
\frac 1n \sum_{i\in[n]} 
  \beta_i \varphi( \eta_i, \cx^{1}_i, \ldots, \cx^{\tmax}_i) - 
  \beta_i \EE\varphi(\eta_i, \cZ^{1}_i, \ldots, \cZ^{\tmax}_i) 
 \toprobalong 0 . 
\]
\end{lemma}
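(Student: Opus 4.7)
The plan is to deduce Lemma~\ref{poly->qque} from Proposition~\ref{amp-pscal} via a Stone--Weierstrass approximation of $\varphi$ by a polynomial on a compact window, combined with a moment-based truncation argument to dominate the tails.

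First I would set up the approximating polynomial. Fix $\varepsilon > 0$ and a large truncation parameter $M > 0$. Since $\varphi$ is continuous on the compact set $\cQ_\eta \times [-M,M]^{\tmax} \subset \RR^{1+\tmax}$, the Stone--Weierstrass theorem yields a polynomial $P_{\varepsilon,M}(\ueta,\ux^1,\ldots,\ux^{\tmax})$ such that
\[
\sup_{(\ueta,\ux) \in \cQ_\eta \times [-M,M]^{\tmax}}
 \left| \varphi(\ueta,\ux) - P_{\varepsilon,M}(\ueta,\ux) \right| \leq \varepsilon.
\]
Then the $n$-indexed family $\psi^{(n)}(\ux^1,\ldots,\ux^{\tmax},i) \eqdef \beta_i^{(n)} P_{\varepsilon,M}(\eta_i^{(n)}, \ux^1,\ldots,\ux^{\tmax})$ is a polynomial in $(\ux^1,\ldots,\ux^{\tmax})$ with bounded degree; its coefficients are polynomial expressions in $\eta_i^{(n)}$ multiplied by $\beta_i^{(n)}$, hence are uniformly bounded in $(i,n)$ by Assumption~\ref{eta} and the boundedness of $(\beta_i^{(n)})$. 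Proposition~\ref{amp-pscal} thus applies with $\cS^{(n)} = [n]$, giving
\[
\frac 1n \sum_{i\in[n]} \beta_i P_{\varepsilon,M}(\eta_i, \cx^1_i,\ldots,\cx^{\tmax}_i)
- \beta_i \EE P_{\varepsilon,M}(\eta_i, \cZ^1_i,\ldots,\cZ^{\tmax}_i) \toprobalong 0.
\]

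Next I would control the replacement error. Setting $A_M^i = \{ \max_{t\in[\tmax]} |\cx^t_i| \leq M \}$ and $B_M^i = \{ \max_{t\in[\tmax]} |\cZ^t_i| \leq M \}$, the deterministic uniform bound gives
\[
\frac 1n \sum_i |\beta_i| \, |\varphi - P_{\varepsilon,M}|(\eta_i, \cx^\bullet_i) \, \1_{A_M^i}
\leq C \varepsilon,
\]
and analogously for the Gaussian contribution on $B_M^i$. Off $A_M^i$ I would use Cauchy--Schwarz together with the polynomial growth $|\varphi| + |P_{\varepsilon,M}| \leq C(1 + \sum_t |\ux^t|^{m'})$:
\[
\frac 1n \sum_i |\beta_i| \, |\varphi - P_{\varepsilon,M}|(\eta_i, \cx^\bullet_i) \, \1_{(A_M^i)^c}
\leq C \left( \frac 1n \sum_i (1 + \textstyle\sum_t |\cx^t_i|^{2m'}) \right)^{1/2}
     \left( \frac 1n \sum_i \1_{(A_M^i)^c} \right)^{1/2}.
\]
The moment bound~\eqref{mom-cx} of Proposition~\ref{amp-pscal} controls the first factor in $L^1$, while Markov's inequality and that same moment bound give $\EE \1_{(A_M^i)^c} \leq C/M^{2m'}$ uniformly in $i,n$, so Cauchy--Schwarz in expectation yields that the whole tail term has $L^1$ norm at most $C/M^{m'}$. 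The Gaussian tail term is handled identically, now invoking the bound~\eqref{bnd-cR} on $\|\chR^t_i\|$ to ensure $\sup_n \max_i \EE |\cZ^t_i|^{2m'} < \infty$.

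Putting things together, the quantity in the lemma is bounded in probability by a term that vanishes as $n\to\infty$ (by the polynomial case), plus a term of order $\varepsilon + M^{-m'}$. Sending $n\to\infty$, then $M\to\infty$, then $\varepsilon\to 0$ concludes. The main (but still routine) difficulty is ensuring that the Cauchy--Schwarz/Markov truncation estimates are uniform in $i$ and $n$; this is precisely where the moment bound~\eqref{mom-cx} already established in Proposition~\ref{amp-pscal} and the covariance bound on $\chR^t_i$ provided by Lemma~\ref{ge} do the required work.
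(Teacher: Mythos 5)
Your argument has a gap in the tail control, and I do not see how to close it by this route. The Stone--Weierstrass polynomial $P_{\varepsilon,M}$ satisfying $\sup_{\cQ_\eta\times[-M,M]^{\tmax}}|\varphi - P_{\varepsilon,M}|\leq\varepsilon$ has a degree and coefficient size that depend on both $\varepsilon$ and $M$, and which in general grow without bound as $M\to\infty$ or $\varepsilon\to 0$. Thus, when you invoke a growth bound $|\varphi| + |P_{\varepsilon,M}| \leq C(1+\sum_t|\ux^t|^{m'})$, the constants $C$ and $m'$ implicitly depend on $\varepsilon$ and $M$. The $L^1$ bound you derive for the off-window term is therefore of the form $C_{\varepsilon,M}/M^{m'_{\varepsilon,M}}$, not $C/M^{m'}$ with $C,m'$ fixed, and you cannot conclude that this vanishes when you send $M\to\infty$ (and even less when you subsequently send $\varepsilon\to 0$, which changes $P_{\varepsilon,M}$ again). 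The obstruction is structural: no sequence of polynomials can uniformly approximate a polynomially-growing continuous function on all of $\RR^{\tmax}$, so any split into ``inside a compact window'' plus ``tail, bounded via the polynomial's own growth'' must contend with the fact that the approximant's growth off the window is not controlled uniformly in the window.

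The paper avoids this entirely by a moment method. It packages the particles $(\beta_i,\eta_i,\cx^1_i,\ldots,\cx^{\tmax}_i)$ into an empirical random probability measure and compares it with the deterministic Gaussian-mixture measure generated by the state-evolution covariances $\chR^{\tmax}_i$. Proposition~\ref{amp-pscal} gives convergence in probability of all joint polynomial moments of the two. The deterministic measures form a pre-compact family, and by a normal-family argument using the uniform bound~\eqref{bnd-cR} on $\|\chR^{\tmax}_i\|$ (together with the boundedness of the $\beta_i$ and $\eta_i$), the moment generating functions of the one-dimensional push-forwards of any subsequential limit by linear functionals are entire; hence the limit is determined by its moments. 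Moment convergence plus moment-determinacy of the limit then upgrades to narrow convergence in probability, and combining this again with the moment convergence yields convergence of $\int\varphi$ for every continuous $\varphi$ of polynomial growth. The moment-determinacy step is precisely what substitutes for, and makes unnecessary, the uniform tail control your proposal attempts but cannot supply.
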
 
\begin{proof} 
Define the $\cP(\RR^{\tmax+2})$--valued random probability measure 
$\hat{\varrho}^{(n)}$ as 
\[
\hat{\varrho}^{(n)} = \frac 1n \sum_{i\in[n]} 
  \delta_{(\beta_i,\eta_i, \cx^1_i,\ldots, \cx^{\tmax}_i)} , 
\]
and the deterministic measure $\bs\varrho^{(n)}$ valued on the same space as
\[
\bs\varrho^{(n)} = \mcL((\beta_\theta,\eta_\theta,\cZ^{1}_\theta, \ldots, 
 \cZ^{\tmax}_\theta)), 
\]
where the random variable $\theta$ is uniformly distributed on $[n]$ and is
independent of $\{ \cZ^t_i \}_{t\in[\tmax],i\in[n]}$. Let
$P(\ueta,\ux^1,\ldots,\ux^\tmax)$ be a multivariate polynomial. Since the
elements $\eta_i$ of the vector $\eta$ are in $\cQ_\eta$, and since the
$\beta_i$ are bounded by assumption, the function 
$\psi : \RR^\tmax \times [n] \to \RR, (\ux^1,\ldots, \ux^\tmax,l) \mapsto
\beta_l P(\eta_l,\ux^1,\ldots,\ux^\tmax)$ complies with the assumptions of the
statement of Proposition~\ref{amp-pscal}.  Observing that 
$\int \underline\beta P(\ueta,\ux^1,\ldots,\ux^\tmax) \,
\bs\varrho^{(n)}(d\underline\beta, d\ueta,d\ux^1,\ldots,d\ux^\tmax) 
= n^{-1} \sum_i \beta_i \EE P(\eta_i, \cZ^1_i, \ldots, \cZ^\tmax_i)$, we 
obtain from Proposition~\ref{amp-pscal} that 
\begin{equation}
\label{cvg-mom} 
\int \underline\beta P(\ueta,\ux^1,\ldots,\ux^\tmax) \, d\hat{\varrho}^{(n)} 
  - \int \underline\beta P(\ueta,\ux^1,\ldots,\ux^\tmax) \, d{\bs\varrho}^{(n)} 
 \toprobalong 0 . 
\end{equation} 
We now show that this convergence remains true when $P$ is replaced with the
function $\varphi$ of the statement. 
Recalling the bound~\eqref{bnd-cR} and the bounds on the $\beta_i$ and the 
$\eta_i$, we know that for each sequence $(n)$ of integers, there is a 
subsequence (that will still denote as $(n)$) and a deterministic measure 
${\bs\varrho}^{\infty} \in \cP(\RR^{\tmax+2})$ such 
that ${\bs\varrho}^{(n)}$ converges narrowly to ${\bs\varrho}^{\infty}$. 
We shall show that $\hat{\varrho}^{(n)}$ converges narrowly in probability 
towards ${\bs\varrho}^{\infty}$, or, equivalently, that 
\begin{equation}
\label{charac}
\forall \bs\omega \in \RR^{\tmax +2}, \quad 
\Phi_{\hat{\varrho}^{(n)}}(\bs\omega) \toprobalong 
\Phi_{{\bs\varrho}^{\infty}}(\bs\omega), 
\end{equation} 
where $\Phi_{\hat{\varrho}^{(n)}}$ 
(respectively $\Phi_{{\bs\varrho}^{\infty}}$) is the characteristic function 
of $\hat{\varrho}^{(n)}$ (respectively ${\bs\varrho}^{\infty}$). This narrow
convergence, coupled with the moment convergence implied
by~\eqref{cvg-mom} along our subsequence, leads to the result of the lemma.  

Define respectively as 
$\hat{\varrho}^{(n)}_{\bs\omega}, {\bs\varrho}^{(n)}_{\bs\omega},
{\bs\varrho}^{\infty}_{\bs\omega} \in \cP(\RR)$ the push-forward of
$\hat{\varrho}^{(n)}, {\bs\varrho}^{(n)}$, and ${\bs\varrho}^{\infty}$ by the 
linear function $\ps{\bs\omega,\cdot}$.  Writing 
$\bs\omega = [ \omega_\beta, \omega_\eta, \bs\omega_x^\T ]^\T$ where 
$\omega_\beta$ and $\omega_\eta$ are scalars, the moment generating function
$\psi_{\bs\omega}^{(n)}(z)$ of ${\bs\varrho}^{(n)}_{\bs\omega}$ is given as 
\[
\psi_{\bs\omega}^{(n)}(z) 
= \frac 1n \sum_{i=1}^n \exp(
 (\omega_\beta \beta_i + \omega_\eta \eta_i) z 
    - {\bs\omega}_x^\T \chR^{\tmax}_i {\bs\omega}_x \, z^2 / 2) .
\]
Using the normal family theorem in conjunction with the 
boundedness of the $\beta_i$, the $\eta_i$ and the norms 
$\| \chR^{\tmax}_i \|$, we obtain that the moment generating function
$\psi_{\bs\omega}^{\infty}(z)$ of ${\bs\varrho}^{\infty}_{\bs\omega}$ is
holomorphic in a neighborhood of zero (even entire), thus,
${\bs\varrho}^{\infty}_{\bs\omega}$ is determined by its moments. 
Moreover, we obtain from the convergence~\eqref{cvg-mom} that each moment of 
$\hat{\varrho}^{(n)}_{\bs\omega}$ converges in probability to its analogue
for ${\bs\varrho}^{\infty}_{\bs\omega}$. This implies that 
$\hat{\varrho}^{(n)}_{\bs\omega}$ converges narrowly in probability towards
${\bs\varrho}^{\infty}_{\bs\omega}$ for each $\bs\omega$, which is equivalent 
to the convergence~\eqref{charac}. 
\end{proof} 

In Algorithm~\eqref{amp-ge}, the term $\diag (W^{\odot 2} \partial
g_e(\cx^{t},\eta,t) )$ can be approximated with the more manipulable term
$\diag (S \partial g_e(\cx^{t},\eta,t) )$. This will be a consequence of the 
next lemma. 
\begin{lemma} 
\label{W2->S} 
For each $t\in[\tmax - 1]$, there is a constant $C \geq 0$ such that for each 
$i\in[n]$, 
\[
\EE \Bigl[ \Bigl( \sum_{l\in[n]} (W_{il}^2 - s_{il}) 
  \partial g_e(\cx_l^{t},\eta_l,t) \Bigr)^4 \Bigr] \leq C / K^2 .
\]
\end{lemma}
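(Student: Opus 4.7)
The plan is to combine the independence of the off-diagonal entries of $W$ with a leave-one-out decoupling that weakly separates $\cx_l^t$ from the entries of row $i$ of $W$. Throughout, write $Y_l = W_{il}^2 - s_{il}$ and $A_l = \partial g_e(\cx_l^t, \eta_l, t)$. The $Y_l$'s are centered, independent across $l$, supported on $l \in \cK_i$ with $|\cK_i| \leq CK$, and satisfy $\EE |Y_l|^{2k} \leq C s_{il}^{2k} \leq C K^{-2k}$ by Assumptions~\ref{ass-X} and~\ref{ass-A}. Since $\partial g_e(\cdot,\ueta,t)$ is a polynomial with bounded coefficients and $\cx_l^t$ has uniformly bounded moments by Proposition~\ref{amp-pscal}, we also have $\sup_n \max_l \EE |A_l|^p < \infty$ for every $p \geq 1$.

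Let $W^{(i)}$ be the matrix obtained by zeroing row $i$ and column $i$ of $W$, and let $\tilde\cx^k$, $k \leq t$, be the iterates produced by the recursion~\eqref{amp-ge} with $W^{(i)}$ in place of $W$. Set $\tilde A_l = \partial g_e(\tilde\cx_l^t, \eta_l, t)$; then the family $\{\tilde A_l\}_l$ is $\sigma(W^{(i)})$--measurable and in particular independent of $\{Y_l\}_l$. The key quantitative input is the $L^p$ bound
\[
\max_l \bigl\| A_l - \tilde A_l \bigr\|_p \leq C_p K^{-1/2}, \qquad p \geq 1,
\]
established inductively on $t$: using Lemma~\ref{ztree}, both $\cx_l^t$ and $\tilde\cx_l^t$ admit polynomial expansions in the entries of $W$ (resp.\ $W^{(i)}$) indexed by labelled trees of bounded depth, and only the monomials whose label set meets $\{i\}$ differ between the two expansions. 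Each such monomial carries at least one factor $W_{ij}$ of $L^p$--norm $O(K^{-1/2})$, while the remaining factors are controlled by the uniform moment bounds on the AMP iterates.

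Write $T_i = \sum_l Y_l \tilde A_l + \sum_l Y_l \Delta_l$ with $\Delta_l = A_l - \tilde A_l$. Conditioning on $\sigma(W^{(i)})$, the first sum is a sum of independent centered random variables, so a direct fourth-moment expansion gives
\[
\EE \Bigl( \sum_l Y_l \tilde A_l \Bigr)^{\!4} \leq C\, \EE \Bigl( \sum_l \tilde A_l^{\,2}\, \EE Y_l^{2} \Bigr)^{\!2} + C\, \EE \sum_l \tilde A_l^{\,4}\, \EE Y_l^{4} \leq \frac{C}{K^2},
\]
using $\sum_l s_{il}^{2} \leq C K \cdot (C/K)^{2} = C/K$, $\sum_l s_{il}^{4} \leq C/K^{3}$, and the moment bounds on $\tilde A_l$. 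For the remainder, Minkowski and Cauchy--Schwarz yield $\| \sum_l Y_l \Delta_l \|_4 \leq \sum_l \|Y_l\|_8 \|\Delta_l\|_8 \leq C K \cdot (C/K) \cdot (C K^{-1/2}) = C K^{-1/2}$, so $\EE( \sum_l Y_l \Delta_l )^{4} \leq C/K^{2}$. The cross terms $\EE [(\sum_l Y_l \tilde A_l)^{4-k}(\sum_l Y_l \Delta_l)^{k}]$ for $k = 1,2,3$ are handled identically by Cauchy--Schwarz together with the analogous $L^{2k}$ bounds, each producing $O(K^{-2})$.

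The main obstacle is the $L^p$ control of $A_l - \tilde A_l$: although the heuristic ``each factor from row $i$ of $W$ costs $K^{-1/2}$'' is clear, the iterative AMP recursion couples all entries and could in principle amplify perturbations, so the induction must carry forward uniform $L^p$ control simultaneously on the iterates $\cx^k$, $\tilde\cx^k$, and their differences. The tree expansion of Lemma~\ref{ztree}, combined with the combinatorial enumeration already used in the proof of Proposition~\ref{amp-vec}, provides the right bookkeeping to rigorously propagate the $K^{-1/2}$ rate through the $t$ iterations.
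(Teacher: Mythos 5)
Your high-level strategy—a leave-one-out decoupling of $T_i$ into a $\sigma(W^{(i)})$--measurable sum $\sum_l Y_l \tilde A_l$ plus a remainder $\sum_l Y_l \Delta_l$—is a genuinely different route from the paper's, which (following \S A.2 of \cite{bay-lel-mon-15}, adapted to the variance profile as in Proposition~\ref{z-tz}) expands $\partial g_e(\cx_l^{t},\eta_l,t)$ directly into a sum over labelled trees and computes the fourth moment by the merged-graph combinatorics. Your conditional fourth-moment bound on the decoupled sum and your Minkowski bound on $\sum_l Y_l \Delta_l$ are correct as far as they go, but everything hinges on the key estimate $\max_l \|\Delta_l\|_p \leq C_p K^{-1/2}$, and your sketch does not establish it.

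The argument ``each $i$-containing monomial carries at least one factor $W_{ij}$ of $L^p$-norm $O(K^{-1/2})$'' is not sufficient when applied via the triangle inequality: for trees of size $m$ containing a vertex of type $i$, there are on the order of $K^{m-1}$ labelings (the $i$-vertex is pinned, the remaining $m-1$ labels each have $O(K)$ admissible choices through the $\cK$-sections of Assumption~\ref{ass-A}), while each monomial has $L^p$-norm $O(K^{-m/2})$, so the naive sum is $O(K^{m/2-1})$ and diverges for $m\geq 3$. Nor can the bound be ``propagated inductively in $t$'' by elementary means: $\Delta_l^{t+1}$ contains $\sum_m W_{lm}\bigl(g_e(\cx_m^t)-g_e(\tilde\cx_m^t)\bigr)$, a sum of $\sim K$ terms each of $L^p$-norm $\sim K^{-1}$ that are not independent, so Minkowski again gives only $O(1)$. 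To obtain the $K^{-1/2}$ rate one must compute $\EE(\Delta_l^t)^{2k}$ as a sum over $2k$-tuples of trees each containing type $i$, and show via the merged-graph counting that the ``every tree hits $i$'' constraint (which, when the merged graph is a tree, forces multiplicity $\geq 2k$ on the root-to-$i$ path, and otherwise forces extra cycles) yields $O(K^{-k})$. That is a new case analysis of the same kind and length as Lemma~\ref{Gtree}, and it must be performed for $2k$ up to at least $8$ since you invoke $\|\Delta_l\|_8$. You correctly gesture at this machinery at the end, but the bookkeeping is the proof; as written, the proposal has a genuine gap, and the reduced problem is at least as hard as the original fourth-moment estimate.
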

The proof of this lemma follows closely the development of
\cite[Sec.~A.2]{bay-lel-mon-15}, with an adaptation to our variance profile
model very similar to what we did in the proof of Proposition~\ref{z-tz} 
above. We omit the details. \\

We are now in position to prove Theorems~\ref{th-amp-lip} and~\ref{th-amp-gal}. 
In the next two subsections, the following notational conventions will be 
adopted.  We shall most often write $g_e(\ux) = g_e(\ux,\ueta,t)$ and 
$h(\ux) = h(\ux,\ueta,t)$ for conciseness; The values of $\ueta$ and $t$ will 
be clear from the context. Denote as $\cE_W$ the event 
$\cE_W = [\| W \| \leq C_W ]$. Given a sequence $(\xi^{(n)})$ of non-negative 
random variables and a number $e > 0$, the notations $\xi^{(n)} \lessprob e$ 
and $\xi^{(n)} \lessprobEw e$ will stand respectively for 
$\PP[ \xi^{(n)} \geq e ] \to_n 0$ and 
$\PP[ [ \xi^{(n)} \geq e ] \cap \cE_W ] \to_n 0$. 
The relations $\lessprob$ and $\lessprobEw$ satisfy some obvious calculation 
rules, such as $\xi^{(n)}_1 + \xi^{(n)}_2 \lessprobEw e_1 + e_2$ when
$\xi^{(n)}_1 \lessprobEw e_1$ and $\xi^{(n)}_2 \lessprobEw e_2$. 
In both proofs, the function $\delta : (0, \epsilon) \to \RR_+$ defined for 
some $\epsilon > 0$ is a generic function, independent of $n$, such that 
$\delta(e) \to 0$ as $e\to 0$. This function can change from a display
to another. 

\subsection{Proof of Theorem~\ref{th-amp-lip}} 
\label{prf-th-lip} 

Given an arbitrarily small $e > 0$, let us construct the functions
$g_e(\cdot,\cdot,t)$ for $t = 0,\ldots, \tmax - 1$ as specified by
Lemma~\ref{ge}.  With these functions at hand, let $(\cx^1,\ldots, \cx^\tmax)$
be the iterates obtained by Algorithm~\eqref{amp-ge}. We shall compare the
iterates of Algorithm~\eqref{alg-amp} with those of the former, and show by
recurrence on $t = 1, \ldots, \tmax$ that 
\[
\| \cx^{t} - x^{t} \|_n \lessprobEw \delta(e), \quad \text{and} \quad 
\| h(x^t) - g_e(\cx^t) \|_n \lessprobEw \delta(e). 
\]
Starting with $t = 1$, since $x^1 - \cx^1 = W (h(x^0) - g_e(x^0))$, it holds 
that 
\[
\| x^1 - \cx^1 \|_n \leq 
 C_W \| h(x^0) - g_e(x^0) \|_n \leq C_W e 
\]
on the event $\cE_W$, by the construction of $g_e$ specified by Lemma~\ref{ge}. 
Using this bound along with the Lipschitz property of $h(\cdot,\eta_i,1)$ 
provided by Assumption~\ref{ass:h-lip}, we obtain 
\[
\| h(x^1) - g_e(\cx^1) \|_n \leq 
\| h(x^1) - h(\cx^1) \|_n + \| h(\cx^1) - g_e(\cx^1) \|_n 
\leq 
C \| x^1 - \cx^1 \|_n + \| h(\cx^1) - g_e(\cx^1) \|_n . 
\]
Writing 
\[
\frac 1n \sum_i (h(\cx^1_i) - g_e(\cx^1_i))^2 = 
\frac 1n \sum_i \left( (h(\cx^1_i) - g_e(\cx^1_i))^2 - 
 \EE (h(\cZ^1_i) - g_e(\cZ^1_i))^2  \right) 
+ \frac 1n \sum_i \EE (h(\cZ^1_i) - g_e(\cZ^1_i))^2 , 
\]
the first term at the right hand side converges to zero in probability by
Lemma~\ref{poly->qque}, and the second term is bounded by $e$ by construction
of the function $g_e(\cdot,\cdot,1)$. This establishes the recurrence property
for $t=1$. 

Let $t \in [\tmax - 1]$, and assume that 
$\| x^s - \cx^s \|_n \lessprobEw \delta(e)$ and 
$\| h(x^s) - g_e(\cx^s) \|_n \lessprobEw \delta(e)$ for $s = 1, \ldots, t$. 
To establish these bounds for $s = t+1$, we first show that 
\begin{equation}
\label{Ons} 
 \left\| \diag (W^{\odot 2} \partial g_e(\cx^{t}) ) g_e(\cx^{t-1}) - 
 \diag (S \EE \partial h(\bZ^t) ) h(x^{t-1}) \right\|_n 
 \lessprobEw \delta(e) .
\end{equation}
To this end, we write 
\begin{align*} 
& \diag (W^{\odot 2} \partial g_e(\cx^{t}) ) g_e(\cx^{t-1}) - 
 \diag (S \EE \partial h(\bZ^t) ) h(x^{t-1}) \\
 &= 
\diag \bigl( (W^{\odot 2} - S) \partial g_e(\cx^{t}) \bigr) g_e(\cx^{t-1})  
 + \diag \bigl(S \bigl(\partial g_e(\cx^{t}) - \EE\partial g_e(\cbZ^t)
  \bigr) \bigr) g_e(\cx^{t-1})  \\ 
&\phantom{=} 
 + \diag \bigl(S \EE \partial g_e(\cbZ^t) \bigr) 
  \bigl( g_e(\cx^{t-1}) - h(x^{t-1}) \bigr) 
 + \diag \bigl(S \bigl(\EE \partial g_e(\cbZ^t) - 
 \EE \partial h(\bZ^t) \bigr) \bigr) h(x^{t-1}) . 
\end{align*} 
We limit ourselves to $t \geq 2$, and omit the easy adaptations of the proof
to manage the terms $g_e(x^0)$ and $h(x^0)$ above when $t = 1$.  
Using Lemma~\eqref{W2->S} and the bound~\eqref{mom-cx}, the term $\chi_{1}$ 
defined as  
\[
\chi_{1} = 
\| \diag ((W^{\odot 2} - S) \partial g_e(\cx^{t}) ) g_e(\cx^{t-1}) \|_n^2 
 = \frac 1n \sum_{i\in[n]} 
  [ (W^{\odot 2} - S ) \partial g_e(\cx^{t}) ]_i^2 
    [g_e(\cx^{t-1})]_i^2  
\]
satisfies $\EE \chi_1 \to 0$ by the Cauchy-Schwarz inequality, thus, 
$\chi_1 \toprobashort 0$. 

We now deal with the next term $\chi_{2} = \|\diag (S (\partial g_e(\cx^{t}) 
 - \EE \partial g_e(\cbZ^t) ) ) g_e(\cx^{t-1}) \|_n^2$. 
For $i = i_n \in [n]$, write 
$\xi_i = [S (\partial g_e(\cx^{t}) - \EE \partial g_e(\cbZ^t) ]_i$. 
Applying the convergence~\eqref{cvg-small} in the statement of 
Proposition~\ref{amp-pscal} with $\cS = \cK_i = \{ j \in[n], \ s_{ij} > 0 \}$
and $\psi(\cx^1_j,\ldots,\cx^t_j,j) = K s_{ij} \partial g_e(\cx^{t}_j)$, 
we obtain that $\xi_{i_n} \toprobashortshort_n 0$. Furthermore, for each 
integer $m > 0$, we obtain from the bound~\eqref{mom-cx} and Minkowski's 
inequality that $\max_{j\in[n]} \EE | \xi_j |^m$ is bounded. 
Let $\varepsilon > 0$. Writing $\xi_i = \xi_i \1_{|\xi_i| \geq \epsilon} 
 + \xi_i \1_{|\xi_i| < \epsilon}$ and using Cauchy-Schwarz inequality
along with the bound~\eqref{mom-cx}, we obtain that 
\[
\EE \chi_2 \leq \frac Cn \sum_{i\in[n]} \PP[\xi_i \geq \varepsilon]^{1/2} 
 + C \varepsilon^2 , 
\]
thus, $\limsup_n \EE\chi_2 \leq C \varepsilon^2$, and since $\varepsilon$ is
arbitrary, $\EE\chi_2 \to 0$.  

Considering the term  
 $\chi_4 = \| \diag \bigl(S \bigl(\EE \partial g_e(\cbZ^t) - 
 \EE \partial h(\bZ^t) \bigr) \bigr) h(x^{t-1}) \|_n^2$, we have 
\begin{align} 
| [ S \EE \partial g_e(\cbZ^t) - S \EE \partial h(\bZ^t) ]_i | &\leq 
| [ S \EE \partial g_e(\cbZ^t) - S \EE \partial h(\cbZ^t) ]_i | + 
| [ S \EE \partial h(\cbZ^t) - S \EE \partial h(\bZ^t) ]_i | \nonumber \\ 
 &\leq Ce + \delta(e),  \label{ESgh} 
\end{align} 
where the bound $Ce$ on the first term comes from 
the construction of $g_e$ in Lemma~\ref{ge}, and the bound $\delta(e)$ on 
the second term is due to the inequality 
$\max_i \| \chR^{\tmax}_i - R^{\tmax}_i \| \leq \delta(e)$ stated by the same 
lemma. Writing 
$\| h(x^{t-1}) \|_n \leq \| h(x^{t-1}) - h(\cx^{t-1}) \|_n + 
 \| h(\cx^{t-1}) \|_n \leq C \| x^{t-1} - \cx^{t-1} \| + \| h(\cx^{t-1}) \|_n$
and using Proposition~\ref{poly->qque} to control $\| h(\cx^{t-1}) \|_n$, 
we obtain that there exists a constant $C > 0$ such that 
$\PP[ [ \| h(x^{t-1}) \|_n \geq C ] \cap \cE_W ] \to 0$. By consequence, 
$\chi_4 \lessprobEw \delta(e)$. 

We finally consider the term 
$\chi_3 = \| \diag \bigl(S \EE \partial g_e(\cbZ^t) \bigr) 
\bigl( g_e(\cx^{t-1}) - h(x^{t-1}) \bigr) \|_n^2$. By Inequality~\eqref{ESgh} 
and the bound~\eqref{bnd-R}, the real numbers 
$b_i = [ S \EE \partial g_e(\cbZ^t) ]_i^2$ are bounded. Using the recurrence 
assumption $\| g_e(\cx^{t-1}) - h(x^{t-1}) \|_n \lessprobEw \delta(e)$, we 
thus obtain that $\chi_3 \lessprobEw \delta(e)$. 

Gathering these results on $\chi_1$ to $\chi_4$, we obtain the convergence
expressed by~\eqref{Ons}. 

With this at hand, we obtain from Equations~\eqref{amp-ge} and~\eqref{alg-amp}
that 
\[
\| x^{t+1} - \cx^{t+1} \|_n \leq \| W \| \| h(x^t) - g_e(\cx^t) \|_n 
 + \left\| \diag (W^{\odot 2} \partial g_e(\cx^{t}) ) g_e(\cx^{t-1}) - 
 \diag (S \EE \partial h(\bZ^t) ) h(x^{t-1}) \right\|_n 
\]
which shows with the help of the recurrence assumption again that 
$\| x^{t+1} - \cx^{t+1} \|_n \lessprobEw \delta(e)$. Similarly to what we 
did for $\| h(x^{1}) - g_e(\cx^{1}) \|_n$, we also obtain that 
$\| h(x^{t+1}) - g_e(\cx^{t+1}) \|_n \lessprobEw \delta(e)$. 

We now have all the elements to show the convergence~\eqref{cvg-W2} provided
in the statement of the theorem. 

Let $\varphi\in\PL_2(\RR^{\tmax +1})$. 
Write $\bx_i = \begin{bmatrix} \eta_i, x^1_i, \ldots, x^\tmax_i
  \end{bmatrix}^\T$ and 
$\check{\bx}_i = \begin{bmatrix} \eta_i, \cx^1_i, \ldots, \cx^\tmax_i
  \end{bmatrix}^\T$, and let  
$\bu = \begin{bmatrix} \bx_1^\T, \ldots, \bx_n^\T \end{bmatrix}^\T$, and
$\check\bu = \begin{bmatrix} \check\bx_1^\T, \ldots, \check\bx_n^\T 
\end{bmatrix}^\T$. Then, by the Cauchy-Schwarz inequality, the inequality 
$(a+b+c)^2 \leq 3 (a^2 + b^2 + c^2)$, and Assumption~\ref{eta}, we have 
\begin{align*} 
\frac 1n \Bigl| \sum_{i\in[n]} 
 \beta_i \varphi(\eta_i, x^1_i, \ldots, x^\tmax_i) -
 \beta_i \varphi(\eta_i, \cx^1_i, \ldots, \cx^\tmax_i) \Bigr|  
&\leq \frac Cn \sum_{i\in[n]} \| \bx_i - \check{\bx}_i \|
   (1 + \| \bx_i \| + \| \check{\bx}_i \| )  \\
&\leq \frac Cn \| \bu - \check{\bu} \|
 \Bigl(\sum_{i\in[n]} (1 + \| \bx_i \| + \| \check{\bx}_i \| )^2 \Bigr)^{1/2}\\
&\leq C \| \bu - \check{\bu} \|_n (1 + \| \bu \|_n + \| \check{\bu} \|_n )  \\
&\leq C \Bigl(\sum_{t=1}^{\tmax} \| x^t - \cx^t \|_n \Bigr) 
 \Bigl( 1 + \sum_{t=1}^{\tmax} \| x^t \|_n + \| \cx^t \|_n \Bigr) . 
\end{align*}
We just showed that 
$\sum_{t=1}^{\tmax} \| x^t - \cx^t \|_n \lessprobEw \delta(e)$. We furthermore
have that 
\[
\forall t \in [\tmax], \quad
 \frac 1n \sum_{i\in[n]} (\cx^t_i)^2 - \EE (\cZ^t_i)^2 \toprobalong 0 
\]
by Proposition~\ref{amp-pscal}, thus, $\| \cx^t \|_n \lessprob C$ for 
each $t\in[\tmax]$. Also, $\| x^t \|_n \leq \| x^t - \cx^t \|_n + \| \cx^t \|_n
 \lessprobEw C$. It follows that 
\[
\frac 1n \Bigl| \sum_{i\in[n]} 
 \beta_i \varphi(\eta_i, x^1_i, \ldots, x^\tmax_i) -
 \beta_i \varphi(\eta_i, \cx^1_i, \ldots, \cx^\tmax_i) \Bigr|  
 \lessprobEw \delta(e) .
\]
Using Lemma~\ref{poly->qque} in conjunction with the bound 
\[
\Bigl|  
 \frac 1n \sum_{i\in[n]} 
  \beta_i \EE\varphi(\eta_i, \cZ^{1}_i, \ldots, \cZ^{t}_i) 
  - \beta_i \EE\varphi(\eta_i, Z^{1}_i, \ldots, Z^{t}_i) 
  \Bigr| \leq \delta(e) ,  
\]
we obtain that 
\[
\frac 1n \Bigl| \sum_{i\in[n]} 
 \beta_i \varphi(\eta_i, x^1_i, \ldots, x^\tmax_i) -
 \beta_i \EE\varphi(\eta_i, Z^{1}_i, \ldots, Z^{t}_i) \Bigr| 
 \lessprobEw \delta(e) , 
\]
and since $e$ is arbitrarily small, we obtain the convergence~\eqref{cvg-W2}. 
Theorem~\ref{th-amp-lip} is proven. 

\subsection{Proof of Theorem~\ref{th-amp-gal}}

Here also, our starting point will be a polynomial AMP sequence. 
Given a small $e > 0$, construct the functions $g_e(\cdot,\cdot,t)$ for $t =
0,\ldots, \tmax - 1$ as specified by Lemma~\ref{ge}.  With these functions at
hand, let $(\cx^1,\ldots, \cx^\tmax)$ be the iterates obtained by
Algorithm~\eqref{amp-ge}, starting with $\cx^0 = x^0$. 

We shall make use of this sequence in a different way than in the previous
subsection. As a matter of fact, we shall not be able to construct a ``true''
AMP sequence $(x^t)$ as in~\eqref{alg-amp} and compare it with the sequence
$(\cx^t)$ recursively on $t$ as we did in the previous section, because we have
lost the Lipschitz character of $h(\cdot, \eta_i, t)$. However, by writing for
each $t = 0, 1, \ldots, \tmax-1$, 
\[
\bar x^{(n),t+1} = 
  W^{(n)} h(\cx^{(n),t},\eta^{(n)},t)  - 
 \diag \left( S^{(n)} \EE \partial h(\bZ^{(n),t},\eta^{(n)},t)\right) 
  h(\cx^{(n),t-1},\eta^{(n)},t-1), 
\]
with the term $\diag(\cdots) h(\cdots)$ being zero for $t=0$, we shall be able 
to show that there exists a function $\delta_x(e) \geq 0$, defined for $e$ 
small enough, such that $\delta_x(e) \to 0$ as $e \to 0$, and such that 
\begin{equation}
\label{x-bx-delta} 
\forall t \in [\tmax], \quad 
  \| \cx^{t} - \bar x^{t} \|_n \lessprobEw \delta_x(e) . 
\end{equation} 
This will be the first step to prove Theorem~\ref{th-amp-gal}. 

We first establish
\begin{equation}
\label{POns} 
\left\| \diag (W^{\odot 2} \partial g_e(\cx^{t}) ) g_e(\cx^{t-1}) - 
 \diag (S \EE \partial h(\bZ^t) ) h(\cx^{t-1}) \right\|_n \lessprob \delta(e) 
\end{equation} 
for $t\in[\tmax - 1]$. To this end, we write 
\begin{align*} 
& \diag (W^{\odot 2} \partial g_e(\cx^{t}) ) g_e(\cx^{t-1}) - 
 \diag (S \EE \partial h(\bZ^t) ) h(\cx^{t-1}) \\
 &= 
\diag \bigl( (W^{\odot 2} - S) \partial g_e(\cx^{t}) \bigr) g_e(\cx^{t-1})  
 + \diag \bigl(S \bigl(\partial g_e(\cx^{t}) - \EE\partial g_e(\cbZ^t)
  \bigr) \bigr) g_e(\cx^{t-1})  \\ 
&\phantom{=} 
 + \diag \bigl(S \EE \partial g_e(\cbZ^t) \bigr) 
  \bigl( g_e(\cx^{t-1}) - h(\cx^{t-1}) \bigr) 
 + \diag \bigl(S \bigl(\EE \partial g_e(\cbZ^t) - 
 \EE \partial h(\bZ^t) \bigr) \bigr) h(\cx^{t-1}) , 
\end{align*} 
Assume for simplicity that $t \geq 2$ as in the previous proof. The terms 
$\chi_{1} = 
\| \diag ((W^{\odot 2} - S) \partial g_e(\cx^{t}) ) g_e(\cx^{t-1}) \|_n^2$ 
and $\chi_{2} = \|\diag (S (\partial g_e(\cx^{t}) 
 - \EE \partial g_e(\cbZ^t) ) ) g_e(\cx^{t-1}) \|_n^2$ are identical to their
analogues in the previous proof. To manage the term 
$\chi_3 = \| \diag \bigl(S \EE \partial g_e(\cbZ^t) \bigr) 
\bigl( g_e(\cx^{t-1}) - h(\cx^{t-1}) \bigr) \|_n^2$, we use 
Lemma~\ref{poly->qque} to obtain that 
\[
\left\| g_e(\cx^{t-1}) - h(\cx^{t-1}) \right\|_n^2 - 
 \frac 1n \sum_{i\in[n]} \EE (g_e(\cZ^{t-1}_i) - h(\cZ^{t-1}_i) )^2 
 \toprobalong 0 .
\]
Since $\EE (g_e(\cZ^{t-1}_i) - h(\cZ^{t-1}_i) )^2 \leq e$ by the construction 
of $g_e$ and the $[ S \EE \partial g_e(\cbZ^t) ]_i$ are bounded, we obtain
that $\chi_3 \lessprob \delta(e)$. The term 
 $\chi_4 = \| \diag \bigl(S \bigl(\EE \partial g_e(\cbZ^t) - 
 \EE \partial h(\bZ^t) \bigr) \bigr) h(\cx^{t-1}) \|_n^2$ can be managed by
a similar method. Details are omitted. This leads to~\eqref{POns}. 

On the event $\cE_W$, we furthermore have 
$\left\| W ( h(\cx^t) - g_e(\cx^t)) \right\| \leq 
C_W \left\| h(\cx^t) - g_e(\cx^t) \right\|$ for $t = 0,\ldots, \tmax - 1$. 
By working similarly as for $\chi_3$ above, we obtain that 
\[
\forall t \in [\tmax], \quad \left\| W ( h(\cx^t) - g_e(\cx^t)) \right\|_n 
  \lessprobEw C \sqrt{e} 
\]
(specificities for $t=0$ obvious). Combining this with the 
convergence~\eqref{POns}, we obtain the convergences~\eqref{x-bx-delta}. 

To pursue, let $\varphi : \cQ_\eta \times \RR^{\tmax} \to \RR$ be as in the
statement of Theorem~\ref{th-amp-gal}. From Lemma~\ref{poly->qque} and from 
the bound 
\[
\Bigl|  
 \frac 1n \sum_{i\in[n]} 
  \beta_i \EE\varphi(\eta_i, \cZ^{1}_i, \ldots, \cZ^{t}_i) 
  - \beta_i \EE\varphi(\eta_i, Z^{1}_i, \ldots, Z^{t}_i) 
  \Bigr| \leq \delta(e) ,  
\]
we obtain that there exists a function $\delta_\varphi(e) \geq 0$ converging
to zero as $e \to 0$, and such that 
\begin{equation}
\label{phi-cvg} 
\Bigl| \frac 1n \sum_{i\in[n]} 
 \beta_i \varphi( \eta_i, \cx^{1}_i, \ldots, \cx^{\tmax}_i) - 
 \beta_i \EE\varphi(\eta_i, Z^{1}_i, \ldots, Z^{\tmax}_i) 
  \Bigr| \lessprob \delta_\varphi(e) . 
\end{equation} 

With the help of these results, we can now construct the sequence of matrices
$(\tbX^{(n)})$ provided in the statement of the theorem.  Given a large enough
integer $k_0$, let $k\geq k_0$, and choose $e > 0$ small enough so that
$\delta_x(e) \vee \delta_\varphi(e) = 1/k$.  By~\eqref{x-bx-delta}, there 
exists a random matrix $\tensor[^k]{\cbX}{^{(n)}} = \Bigl[
\tensor[^k]{\cx}{^{(n),1}} \ \cdots \ \tensor[^k]{\cx}{^{(n),\tmax}} \Bigr] \in
\RR^{n\times\tmax}$ such that 
\[
 \PP\left[\left[ \left\| \tensor[^k]{\cx}{^{t+1}} -  
 \left( W h(\tensor[^k]{\cx}{^{t}})  - 
 \diag \left( S \EE \partial h(\bZ^t)\right) 
  h(\tensor[^k]{\cx}{^{t-1}}) \right) \right\|_n 
 \geq 1/k \right] \cap \cE_W \right] \tolong 0  
\]
for each $t =0,\ldots, \tmax - 1$, and by \eqref{phi-cvg}, 
\[
\PP\left[ \left| \frac 1n \sum_{i\in[n]} 
  \beta_i \varphi( \eta_i, [\tensor[^k]{\cx}{^1}]_i, \ldots, 
 [\tensor[^k]{\cx}{^{\tmax}}]_i ) - 
  \beta_i\EE\varphi(\eta_i, Z^{1}_i, \ldots, Z^{\tmax}_i) \right| 
 \geq 1/k \right] 
 \tolong 0 , 
\]
where the $[\tensor[^k]{\cx}{^{t}}]_i$ are the elements of the vector
$\tensor[^k]{\cx}{^{t}}$. 
If $k=k_0$, let $N_k \in \NN_*$ be the smallest integer such that for all 
$n \geq N_k$, the two probabilities above are upper bounded by $1/k$. If 
$k > k_0$, do the same thing for $N_k$ with the supplementary condition that 
$N_k > N_{k-1}$. Construct the sequence of matrices 
$( \tbX^{(n)} )_{n \geq 2}$, with 
$\tbX^{(n)} = \Bigl[ {\tx}{^{(n),1}} \ \cdots \ {\tx}{^{(n),\tmax}} 
 \Bigr] \in \RR^{n\times \tmax}$ in such a way that 
$\tbX^{(n)} = \tensor[^k]{\cbX}{^{(n)}}$ for $n = N_k, N_k +1, \ldots, 
N_{k+1} - 1$. Then, this sequence satisfies the properties provided in the 
statement of Theorem~\ref{th-amp-gal}. 

\section{Proof of Theorem~\ref{th-lvsym}} 
\label{prf-lvsym} 

As said in the introduction, $u_\star^{(n)}$ can be identified as the solution
of a LCP problem \cite{tak-livre96}. We recall herein the elements of the LCP
theory needed in this paper.  As above, we often drop the superscript $^{(n)}$. 
We shall also use the well-known notations $\succcurlyeq$, $\preccurlyeq$,
and $\prec$ to refer to element-wise inequalities between vectors. 
Recall that the notation $xy$ refers to the elementwise product of the
$\RR^n$--valued vectors $x$ and $y$. 

\subsection{The equilibrium $u_\star$ as the solution of a LCP problem} 

Given a matrix $B \in \RR^{n\times n}$ and a vector $b \in \RR^n$, the LCP
problem, denoted as $\LCP(B,b)$, consists in finding a couple of vectors
$(z,y) \in \RR^n \times \RR^n$ such that
\begin{align*}
& y = B z + b \succcurlyeq 0, \\
&  z \succcurlyeq 0, \\
& \ps{z,y} = 0. 
\end{align*}
When a solution $(z,y)$ exists and is unique, we write $z = \LCP(B,b)$.

Obviously, an equilibrium $\bar u$ of our LV dynamical system~\eqref{lv} is
defined by the system
\begin{align*}
& \bar u  \succcurlyeq 0,  \\
& \bar u \left( r + \left(\Sigma - I \right) \bar u \right)
  = 0 .
\end{align*}
Furthermore, the supplementary condition
\[
 r + \left(\Sigma - I \right) \bar u  \preccurlyeq 0
\]
turns out to be a necessary condition for the equilibrium $\bar u$ to be stable
in the classical sense of Lyapounov theory (see \cite[Chapter 3]{tak-livre96}
to recall the different notions of stability, and
\cite[Theorem~3.2.5]{tak-livre96} for this result). These three conditions can
be rewritten as
\begin{align*}
& \bar w = \left( I - \Sigma \right) \bar u - r \succcurlyeq 0,  \\
& \bar u \succcurlyeq 0,  \\
& \ps{\bar w, \bar u} = 0,  
\end{align*} 
in other words, the couple $(\bar u, \bar w)$ solves the problem
$\LCP(I - \Sigma, -r)$.

Theorem~\ref{th-lvsym} is more specific, since it asserts that the
ODE~\eqref{lv} has a (unique) \emph{globally stable} equilibrium when $\|
\Sigma \| < 1$.  Recalling that $\Sigma$ is a symmetric matrix, this can be
obtained from the following result:
\begin{proposition}[Lemma 3.2.2 and Theorem 3.2.1 of \cite{tak-livre96}]
Given a symmetric matrix $B\in\RR^{n\times n}$ and a vector $b\in\RR^n$,
consider the LV ODE
\[
\dot u(t) = u(t) \left( b + B u(t) \right) , \quad t \geq 0, \
 u(0) \in \RR_{*+}^n. 
\]
Then, the LCP problem $\LCP(-B,-b)$ has an unique solution for each $b \in
\RR^N$ if and only if $B$ is negative definite (notation $B < 0$). On the
domain where $B < 0, b \in \RR^n$, the function $x = \LCP(-B,-b)$ is
measurable.
Moreover, if $B < 0$, then for each $b \in \RR^N$, the ODE above has a
globally stable equilibrium $u$ given as $u = \LCP(-B,-b)$.
\end{proposition}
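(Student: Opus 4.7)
The plan is to treat the three assertions separately: the first two are algebraic, the third is dynamical. For the \textbf{equivalence} between $B < 0$ and uniqueness of $\LCP(-B,-b)$ for every $b \in \RR^n$, I would invoke the classical result from LCP theory (see \cite{cot-pan-sto-livre09}) stating that an LCP with matrix $M$ has a unique solution for every right-hand side if and only if $M$ is a P-matrix (all principal minors positive). When $M = -B$ is \emph{symmetric}, being a P-matrix is equivalent to being positive definite, by applying Sylvester's criterion to each principal submatrix. Hence the uniqueness-for-every-$b$ property is equivalent to $-B > 0$, i.e.\ $B < 0$.

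For the \textbf{measurability} of $b \mapsto \LCP(-B,-b)$ on the domain $B < 0$, I would reformulate the LCP as the KKT system of the strictly convex quadratic program
\[
\min_{z \succcurlyeq 0} \tfrac{1}{2} z^\T (-B) z - b^\T z.
\]
Since $-B$ is positive definite, the program has a unique minimizer that depends continuously (in fact locally Lipschitz-continuously) on $b$ by standard parametric convex optimization arguments; continuity implies Borel measurability.

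For the \textbf{global stability} of $u_\star = \LCP(-B,-b)$, introduce the slack $w_\star = -Bu_\star - b$, so $u_\star, w_\star \succcurlyeq 0$ and $\ps{u_\star, w_\star} = 0$. Set $S = \{i : u_{\star,i} > 0\}$ and $S^c = [n]\setminus S$; complementarity then forces $w_{\star,i} = 0$ on $S$ and $w_{\star,i} \geq 0$ on $S^c$. Following the classical Goh--Volterra construction, I would define the candidate Lyapunov function on $\RR_{*+}^n$ by
\[
V(u) = \sum_{i\in S}\bigl( u_i - u_{\star,i} - u_{\star,i} \log(u_i/u_{\star,i}) \bigr) + \sum_{i\in S^c} u_i .
\]
Differentiating along trajectories, using $\dot u_i = u_i(b_i + (Bu)_i)$ together with the identity $b + Bu_\star = -w_\star$ and the symmetry $B = B^\T$, a short computation yields
\[
\dot V(u) = (u - u_\star)^\T B (u - u_\star) - \sum_{i\in S^c} w_{\star,i} u_i \leq 0,
\]
both terms being non-positive because $B < 0$ and $w_{\star,i} u_i \geq 0$ on $S^c$. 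Together with the pre-compactness of trajectories (which is part of the well-posedness secured by $B < 0$, in line with the results cited from \cite{li-etal-09}), a LaSalle invariance argument then gives global asymptotic stability of $u_\star$ on $\RR_{*+}^n$.

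The \textbf{main obstacle} is the LaSalle step: identifying the largest invariant subset of $\{\dot V = 0\}$ when $u_\star$ has components on the boundary of $\RR_+^n$. On $\RR_{*+}^n$ the first term of $\dot V$ vanishes only at $u = u_\star$, but trajectories starting in $\RR_{*+}^n$ can at best approach $u_\star$ in the closure, so one must argue via $\omega$-limit sets and the invariance of the flow that any limit trajectory trapped in $\{\dot V = 0\}$ collapses to the single point $u_\star$; this requires combining the non-degeneracy $B < 0$ with the complementarity structure of $(u_\star, w_\star)$ to exclude limit cycles or boundary equilibria other than $u_\star$. The two preceding parts are, by comparison, standard consequences of LCP and convex optimization theory.
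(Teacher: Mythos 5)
The paper gives no proof of this proposition: it is imported verbatim from Takeuchi's book (Lemma~3.2.2 and Theorem~3.2.1 of \cite{tak-livre96}), so there is no in-paper argument to compare against. Your reconstruction is essentially the standard proof from that reference and is correct in outline. The equivalence ``unique solution of $\LCP(-B,\cdot)$ for every right-hand side $\iff$ $-B$ is a P-matrix $\iff$ $-B>0$ for symmetric $B$'' is exactly the classical route, and your Lyapunov computation is right: with $\partial V/\partial u_i=(u_i-u_{\star,i})/u_i$ in both the $S$ and $S^c$ cases one gets $\dot V=(u-u_\star)^\T B(u-u_\star)-\sum_{i\in S^c}w_{\star,i}u_i\le 0$, which is Goh's argument as used by Takeuchi.

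Two points deserve attention. First, the measurability claim in the statement is for the map $(B,b)\mapsto\LCP(-B,-b)$ on the open set $\{B<0\}\times\RR^n$, not just $b\mapsto\LCP(-B,-b)$ for fixed $B$; this matters because in the application $B=\Sigma-I$ is the random object and $b=r$ is deterministic. Your argument extends with no extra work (the unique minimizer of the strongly convex quadratic program depends jointly continuously on the matrix and the linear term; alternatively, the Chen--Xiang Lipschitz bound the paper uses later gives this directly), but as written you only cover half of what is asserted. Second, the LaSalle step you flag is genuinely the only delicate point and you should note why it closes: since $V$ decreases along trajectories and $V(u)\to+\infty$ as $u_i\to 0^+$ for $i\in S$, the sublevel set $\{V\le V(u(0))\}$ keeps the coordinates in $S$ uniformly away from zero, and combined with the pre-compactness of the orbit the $\omega$-limit set is a compact invariant set on which $V$ is continuous and constant; invariance then forces $\dot V\equiv 0$ there, and negative definiteness of $B$ forces every point of it to equal $u_\star$. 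With these two additions your proposal is a complete and faithful substitute for the cited result.
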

Considering our LV ODE~\eqref{lv}, the first part of Theorem~\ref{th-lvsym}
is true by setting 
\begin{equation} 
\label{ulcp} 
u_\star^{(n)} = \left\{\begin{array}{ll}
\LCP(I_n - \Sigma^{(n)}, -r^{(n)}) & \text{if } \| \Sigma^{(n)} \| < 1 , \\
 0 & \text{otherwise.} 
 \end{array}\right.   
\end{equation}

\subsection{Behavior of $\mu^{u_\star}$ by an AMP approach: proof outline}

In the remainder, given $x,y,z,v \in \RR^n$ and $A \in \RR^{n\times n}$, 
expressions such as $xAy$ or $x A yzv$ always denote $\RR^n$--valued vectors
with the bracketing always starting from the right. Thus, $xAy = x(Ay)$ and 
$xAyzv = x(A(yzv))$. 
 
Let us outline our approach for studying the asymptotic behavior of 
the measure $\mu^{u_\star^{(n)}}$. 

Let $S^{(n)} \in \RR_+^{n\times n}$ and $\eta^{(n)} \in \RR_+^{n}$ be
respectively a deterministic symmetric matrix and a deterministic vector that
comply with Assumptions~\ref{ass-A} and~\ref{eta} respectively. These objects
will be specified below. For the moment, we just assume that 
\begin{equation}
\label{bnd-S} 
\limsup_n \rownorm S^{(n)} \rownorm < 1 . 
\end{equation} 
Given the matrix $S$, let $W$ be random symmetric $n\times n$ matrix given by
Equation~\eqref{def-W}. In this equation, the matrix $X$ is precisely the one
used to construct the interaction matrix $\Sigma$ of our LV model.  Regarding
the function $h: \RR \times \cQ_\eta \times \NN \to \RR$ of
Section~\ref{def-amp}, set $h(\ux,\ueta,t) = (\ux + \ueta)_+$. Observe that 
this function is uniformly Lipschitz in $\ux$ and satisfies 
Assumption~\ref{ass:h-lip}. 
With a small notational modification, we shall consider that $h$ is a 
$\RR \times \cQ_\eta \to \RR$ function, and write 
\begin{equation} 
\label{def-h}
h(\ux,\ueta) = (\ux + \ueta)_+ . 
\end{equation} 
Let us consider a Gaussian sequence $(\bZ^{t} = [ Z^{t}_i ]_{i=1}^n)_{t}$ 
which distribution is determined by the 
$(S,h,\eta,1_n)$--state evolution equations. In particular, writing 
$a^{t}_i = \EE  (Z^{t}_i)^2$, the vector of variances 
$a^{t} = [a^{t}_i]_{i=1}^n$ satisfies the recursion 
\begin{equation}
\label{p-recurs} 
a^{1} = S \left(1 + \eta\right)^2, \quad \text{and} \quad  
a^{t+1} = S \EE \left(\sqrt{a^{t}} \xi + \eta\right)_+^2,  
\end{equation} 
with $\xi \sim \cN(0, I_n)$. 

Notice that $\EE \partial h(Z^{t}, \eta) = \PP[ Z^t + \eta \geq 0 ]
 = \PP[  \sqrt{a^{t}} \xi + \eta \geq 0 ]$ with $\partial h(x,\eta) = 
\partial h(\cdot,\eta)|_x$. 
Writing $\zeta^{t} = S \PP[  \sqrt{a^{t}} \xi + \eta \geq 0 ]$, we can apply
Theorem~\ref{th-amp-lip} to the AMP sequence in $t$ 
\begin{equation} 
\label{xt+1xt}
x^0 = 1, \quad x^{t+1} = W (x^t+\eta)_+ - \zeta^t (x^{t-1}+\eta)_+ .  
\end{equation} 
Thus, given a random variable $\theta$ uniformly distributed on $[n]$
and independent of $\bZ^t$, the distribution $\mu^{x^{t}}$ can be 
approximated by $\mcL(Z^t_\theta)$ in the sense that 
$\bs d_2(\mu^{x^{t}}, \mcL(Z^t_\theta)) \toprobashort 0$ as $n\to\infty$. 

In all what follows, when we say that a sequence $(y^t)_{t=1,2,\ldots}$ of
$\RR^n$--valued vectors converge to the vector $y$ as $t\to\infty$, we 
always consider that this convergence occurs in the max-norm 
$\| \cdot \|_\infty$ and is uniform in $n$. 

Using the condition $\limsup_n \rownorm S \rownorm < 1$, it is not difficult 
to show that the iterates $a^{t}$ converge as $t\to\infty$ to the vector $a$ 
defined as the unique solution of the identity 
\begin{equation} 
\label{pinf} 
a = S \EE \left(\sqrt{a} \xi + \eta \right)_+^2. 
\end{equation} 
On the one hand, this implies that for large values of $t$, we can replace the 
vector $\bZ^t$ with a vector $\bZ = [Z_i]_{i=1}^n \eqlaw \sqrt{a} \xi$, which 
implies that $\mu^{x^t}$ can be approximated by $\mcL( Z_\theta )$, where 
the uniformly distributed random variable $\theta$ on $[n]$ is independent of 
$\bZ$. On the other hand, $\zeta^t$ converges in $t$ to the vector $\zeta$ 
given as 
\begin{equation}
\label{zeta} 
\zeta = S \PP [ \sqrt{a} \xi + \eta \geq 0 ]. 
\end{equation} 
We can thus write 
\[
x^{t+1} = W (x^t+\eta)_+ - \zeta (x^{t-1}+\eta)_+ + \chi_1, 
\]
where $\chi_1 = (\zeta - \zeta^t) (x^{t-1}+\eta)_+$ is such that 
$\| \chi_1 \|_n$ is small for large $t$. 

Next, we need to show that the vectors $x^t$ and $x^{t+1}$ tend to become 
aligned as $t$ grows. To that end, building on a result of 
\cite{mon-ric-16,don-mon-16}, we show that the correlation vector $q^t$, 
defined as 
\[
q^{t} = \frac{\EE \bZ^{t} \bZ^{t-1}}{\sqrt{a^{t} a^{t-1}} },  
\] 
converges to the vector $1_n$ as $t\to\infty$. With these elements, the AMP 
iteration can be written as 
\[
x^t = W (x^t + \eta)_+ - \zeta (x^t + \eta)_+ + \chi_1 + \chi_2, 
\]
where 
\[
\chi_2 = \zeta \left[ (x^t + \eta)_+ - (x^{t-1} + \eta)_+ \right] 
   + x^t - x^{t+1} 
\]
has the property that $\| \chi_2 \|_n$ is small for large values of $t$. The 
next to last equation can be rewritten
\[
- (x^t + \eta)_- = W (x^t + \eta)_+ - (1+\zeta) (x^t + \eta)_+ 
    + \eta + \chi_1 + \chi_2, 
\]
which leads to the equivalent equation 
\begin{align} 
- (1+\zeta)^{-1/2} (x^t+\eta)_- &= (1+\zeta)^{-1/2} W (1+\zeta)^{-1/2} 
     (1+\zeta)^{1/2} (x^t + \eta)_+  
   - (1+\zeta)^{1/2} (x^t + \eta)_+ \nonumber \\
 &\phantom{=} + (1+\zeta)^{-1/2} \eta + \bs\varepsilon^t, 
\label{lcp-pert} 
\end{align}
with $\bs\varepsilon^t = (1+\zeta)^{-1/2} (\chi_1 + \chi_2)$. 

We know that when $\|\Sigma \| < 1$, the equilibrium vector of the
ODE~\eqref{lv} is $u_\star = \LCP(I-\Sigma, -r)$. On the other hand, the
previous equation shows that the vector $u^t = (1+\zeta)^{1/2} (x^t +
\eta)_+$ is a solution to the LCP problem $\LCP(I-\diag(1+\zeta)^{-1/2} W
\diag(1+\zeta)^{-1/2}, - (1+\zeta)^{-1/2} \eta - \bs\varepsilon^t)$. 
Thus, if we choose the matrix $W$ as 
$W = \diag(1+\zeta)^{1/2} \Sigma \diag(1+\zeta)^{1/2}$, or, equivalently, if
we put 
\begin{equation}
\label{eq-S}
S = \diag(1+\zeta) V \diag(1+\zeta),
\end{equation}
and furthermore, if we set 
\begin{equation}
\label{eq-eta}
\eta = (1 + \zeta)^{1/2} r,
\end{equation}
then we obtain that 
\[
u^t = \LCP(I - \Sigma, - r - \bs\varepsilon^t). 
\]
Thus, we constructed with the help of an AMP approach a ``perturbed'' version 
of $u_\star$ which empirical distribution can be evaluated with arbitrary 
precision for large $n$, and the perturbation $\bs\varepsilon^t$ can be
made ``arbitrarily small'' in the norm $\|\cdot\|_n$ for large $t$. 

Recalling Equations~\eqref{pinf} and~\eqref{zeta}, the choices made 
in~\eqref{eq-S} and~\eqref{eq-eta} result in the following system of 
$2n$ equations in the unknown vectors $a$ and $\zeta$:  
\begin{align*}
a &= (1+\zeta) V (1+\zeta) 
  \EE \left(\sqrt{a} \xi + (1+\zeta)^{1/2} r\right)_+^2 \\
\zeta &= (1+\zeta) V (1+\zeta) 
  \PP\left[\sqrt{a} \xi + (1+\zeta)^{1/2} r \geq 0\right]. 
\end{align*}
It is a bit more convenient to write 
\begin{equation}
\label{p-a}
p = \frac{a}{1 + \zeta},
\end{equation}
resulting in the equivalent system~\eqref{sys-lv}.  
Remembering that $\mu^{x^t} \simeq \mcL(Z_\theta)$ and observing that 
$u^t = (1+ \zeta)^{1/2}(x^t + (1+\zeta)^{1/2} r)_+$, we obtain from what 
precedes that $\mu^{u_\star} \simeq \mcL( (Y_\theta)_+ )$ where $Y_\theta$
is the random variable specified in the statement of Theorem~\ref{th-lvsym}.

To prove this theorem rigorously, we thus need to perform the 
following steps:
\begin{enumerate} 

\item Prove that under Hypothesis~\ref{hyp-V}, the system~\eqref{sys-lv} admits
an unique solution $(p, \zeta) \in \RR_+^n \times [0,1]^n$. This is the content
of Lemma~\ref{sys-uniq} below. With this solution, construct the matrix $S$ and
the vector $\eta$ as in~\eqref{eq-S} and~\eqref{eq-eta} respectively.  Note
that the bound~\eqref{bnd-S} is satisfied thanks to Hypothesis~\ref{hyp-V}. 

\item With the help of Theorem~\ref{th-amp-lip}, establish 
Equation~\eqref{lcp-pert} with a control on the error term $\bs\varepsilon^t$. 

\item We just saw that $u^t$ is a perturbed version of $u_\star$. 
To be able to approximate $\mu^{u_\star}$ with $\mu^{u^t}$, we need a  
LCP perturbation result. This will be provided by the work of 
Chen and Xiang in \cite{che-xia-07}, which will let us control the norm 
$\| u^t - u_\star \|$. 
\end{enumerate}

\subsection{Behavior of $\mu^{u_\star}$: Detailed proof} 

\begin{lemma}
\label{sys-uniq} 
The system~\eqref{sys-lv} admits an unique solution 
$(p^{(n)},\zeta^{(n)}) \in \RR_+^n \times [0,1]$, and this solution satisfies
$\sup_n \| p^{(n)} \|_\infty < \infty$. 
\end{lemma}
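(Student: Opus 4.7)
The plan is to view \eqref{sys-lv} as a fixed-point equation $(p,\zeta) = \Psi(p,\zeta)$, where $\Psi$ denotes the right-hand side map of \eqref{var-lcp}--\eqref{ons-lcp}. I would first prove a priori bounds that confine any solution to a compact convex set $K_n$, then obtain existence on $K_n$ by Brouwer's theorem, and finally establish uniqueness, which is the main obstacle, through a contraction estimate made possible by the strict inequality in Hypothesis~\ref{hyp-V}.

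For the a priori bounds, I would proceed coordinatewise. Writing \eqref{ons-lcp} as $\zeta_i = (1+\zeta_i) B_i$ with $B_i := \sum_j v_{ij}^{(n)} (1+\zeta_j) \PP[\sqrt{p_j}\xi_j + r_j^{(n)} \geq 0]$, the self-consistent form $\zeta_i = B_i/(1-B_i)$ together with the bound $B_i \leq 2\rownorm V^{(n)}\rownorm$ (using $(1+\zeta_j) \leq 2$, $\PP[\cdot] \leq 1$, and Hypothesis~\ref{hyp-V}) forces $\zeta \preccurlyeq (1-c_\zeta) 1_n$ for some $c_\zeta > 0$ independent of $n$. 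Plugging the elementary bound $\EE(\sqrt{p_i}\xi + r_i^{(n)})_+^2 \leq p_i + \|r^{(n)}\|_\infty^2$ into \eqref{var-lcp} gives $(I - 4V^{(n)})\,p \preccurlyeq 4C V^{(n)} 1_n$ componentwise, with $C$ uniform by Hypothesis~\ref{lv-r}; Neumann inversion, legitimate since $4\rownorm V^{(n)}\rownorm < 1$, yields the claimed $\sup_n \|p^{(n)}\|_\infty < \infty$. Crucially, I would also extract a \emph{uniform strictly positive lower bound} $p \succcurlyeq c_p 1_n$ from $\EE(\sqrt{p_j}\xi + r_j^{(n)})_+^2 \geq (r_j^{(n)})^2 \geq c_r^2$ combined with the row-sum lower bound $\sum_j v_{ij}^{(n)} \geq c_V$ of Hypothesis~\ref{lv-sigma}, giving $p_i \geq c_r^2 c_V$.

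For existence, continuity of $\Psi$ and stability of the compact convex set $K_n = [c_p, C_p]^n \times [0, 1-c_\zeta]^n$ under $\Psi$ follow from the same inequalities applied in the ``forward'' direction, so Brouwer's theorem delivers a fixed point.

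Uniqueness is the delicate step and the main obstacle. The lower bound $p \succcurlyeq c_p 1_n$ is essential: on $K_n$ both $g(p,r) := \EE(\sqrt{p}\xi + r)_+^2$ (whose $p$-derivative is $\Phi(r/\sqrt{p}) \in [0,1]$ by Stein's lemma) and $q(p,r) := \Phi(r/\sqrt{p})$ (whose $p$-derivative $-\tfrac{r}{2p^{3/2}}\phi(r/\sqrt{p})$ is uniformly bounded for $p \geq c_p$ and $r \in \cQ_r$) become Lipschitz in $p$ with $n$-independent constants. I would then estimate $\Psi(p,\zeta) - \Psi(p',\zeta')$ in a weighted sup-norm $\|(u,v)\|_\alpha := \|u\|_\infty + \alpha \|v\|_\infty$ on $\RR^{2n}$, with $\alpha > 0$ tuned to balance the two off-diagonal cross-coupling blocks of the Jacobian of $\Psi$ (the $\partial_\zeta$ block involving $g$, and the $\partial_p$ block involving $\partial_p q$). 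Closing the estimate into a strict contraction $\|\Psi(p,\zeta) - \Psi(p',\zeta')\|_\alpha \leq \kappa \|(p-p',\zeta-\zeta')\|_\alpha$ with $\kappa < 1$ is the delicate point; it rests on the slack $4\rownorm V^{(n)}\rownorm < 1$ of Hypothesis~\ref{hyp-V}, supplemented by the sharper at-solution identities $B_i = \zeta_i/(1+\zeta_i)$ and $(1+\zeta_i)\sum_j v_{ij}^{(n)} q(p_j) \leq \zeta_i$, which tighten the bound on the diagonal block $\partial_\zeta \Phi_2$ and leave the room needed to absorb the cross-coupling terms.
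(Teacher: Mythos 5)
Your a priori bounds and the Brouwer step are on the same track as the paper's: the paper also shows $\Psi$ maps $[0,p_{\max}]^n\times[0,1]^n$ into itself (using $f(\underline p,\underline r)\leq 3\underline p/4$ for large $\underline p$ uniformly in $\underline r$, plus $\rownorm V(1+\zeta)^2\rownorm < 1$), that there is no fixed point outside this set, and then invokes Brouwer. That part is fine.

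The uniqueness step, however, is where you diverge sharply from the paper, and your proposed contraction argument has a genuine gap that I do not think can be closed under Hypothesis~\ref{hyp-V} alone. The cross-coupling Lipschitz constants you must control are $L_{p\zeta}\lesssim \rownorm V\rownorm\sup_j g(p_j,r_j)$ and $L_{\zeta p}\lesssim \rownorm V\rownorm\sup_j|\partial_p q(p_j,r_j)|$, and the standard necessary condition for a weighted sup-norm contraction on the two-block system is $L_{p\zeta}L_{\zeta p}<(1-L_{pp})(1-L_{\zeta\zeta})$. The left-hand side is governed by $p_{\max}$ (through $g$) and by $c_p$ (through $\partial_p q \sim p^{-3/2}$). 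But $p_{\max}$ is obtained from a Neumann-type estimate $\|p\|_\infty\lesssim \rownorm V\rownorm/(1-4\rownorm V\rownorm)$ and therefore blows up as $\rownorm V\rownorm\uparrow 1/4$, which is permitted by Hypothesis~\ref{hyp-V}. Simultaneously, $1-L_{pp}$ and $1-L_{\zeta\zeta}$ degenerate to $0$. So the contraction inequality is a strictly stronger condition than Hypothesis~\ref{hyp-V}, and your argument would silently require $\rownorm V\rownorm$ to be small compared to $\cQ_r$-dependent constants. Moreover, the ``at-solution identities'' such as $B_i=\zeta_i/(1+\zeta_i)$ cannot rescue this: to separate two putative fixed points you need Lipschitz control of $\Psi$ along the whole segment joining them, not just at the endpoints, so identities valid only at a fixed point cannot tighten the relevant operator-norm bound.

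The paper's uniqueness argument is of a completely different nature and avoids any contraction estimate. Starting from an arbitrary fixed point $(p,\zeta)$ of~\eqref{sys-lv}, it builds the matrix $S$ and vector $\eta$ via~\eqref{eq-S}--\eqref{eq-eta}, and then constructs a Gaussian ``blow-up'' of the problem: with $n$ fixed and $M\to\infty$, it sets $\bV^{(nM)}=V^{(n)}\otimes(M^{-1}1_M1_M^\T)$, $\br^{(nM)}=r^{(n)}\otimes 1_M$, draws a GOE-like $\bX^{(nM)}$, forms $\bSigma^{(nM)}=(\bV^{(nM)})^{\odot 1/2}\odot \bX^{(nM)}$, and considers the LCP vector $\bu_\star^{(nM)}=\LCP(I-\bSigma^{(nM)},-\br^{(nM)})$ (well-defined for $M$ large by Gaussian concentration). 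Crucially, $\bu_\star^{(nM)}$ does not depend on which fixed point $(p,\zeta)$ you started from. Running the AMP machinery (Theorem~\ref{th-amp}, with coordinate-block test functions) then shows that, for each $i\in[n]$, the block empirical measure $\mu^{\bu_{\star}^{(nM)}(i)}$ converges in probability to $\mcL((1+\zeta_i)(\sqrt{p_i}\uxi+r_i)_+)$ as $M\to\infty$. Since these limiting laws are determined by $\bu_\star^{(nM)}$, which is independent of the chosen fixed point, and since $(p_i,\zeta_i)$ is recoverable from the law of a truncated Gaussian with positive mean (the mean $(1+\zeta_i)r_i$ and variance $(1+\zeta_i)^2 p_i$ uniquely pin down $(p_i,\zeta_i)$ given $r_i>0$), the fixed point must be unique. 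This approach buys uniqueness without having to quantify any Lipschitz slack in Hypothesis~\ref{hyp-V}, at the cost of invoking the heavy AMP theorem with a somewhat elaborate dimensional blow-up.
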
 
This lemma will be proven in Section~\ref{prf-sys-uniq} below. 

We now consider the second step of the proof. 
Given the solution $(p,\zeta)$ of System~\eqref{sys-lv}, let $S \in
\RR_+^{n\times n}$ and $\eta\in\RR_{*+}^n$ be given by equations~\eqref{eq-S}
and~\eqref{eq-eta} respectively. Note that the bound~\eqref{bnd-S} is satisfied
thanks to Hypothesis~\ref{hyp-V}.  Define the random matrix $W =
S^{\odot {1/2}} \odot X$.  Define the function $h$ as in~\eqref{def-h}, and let
$x^0 = 1_n$. 

We these definitions, let us check that the assumptions leading to
Theorem~\ref{th-amp-lip} are satisfied.  Using Hypothesis~\ref{lv-sigma}, it is
clear that our matrix $W$ satisfies Assumptions~\ref{ass-X} and~\ref{ass-A}.
Trivially, $x^0 = 1_n$ satisfies Assumption~\ref{x0}.  Letting $r_{\min},
r_{\max} > 0$ be respectively the minimum and maximum values of the elements of
the compact $\cQ_r$ of Hypothesis~\ref{lv-r}, we obtain from
Equation~\eqref{eq-eta} that the elements of $\eta$ belong to the compact
interval $\cQ_\eta = [ \eta_{\min}, \eta_{\max} ] \subset \RR_{*+}$, where
$\eta_{\min} = r_{\min}$ and $\eta_{\max} = \sqrt{2} r_{\max}$, and
Assumption~\ref{eta} is satisfied.  Finally, one can readily check that the
function $h$ and the matrix $S$ satisfy Assumptions~\ref{ass:h-lip}
and~\ref{nondeg}. 

\begin{lemma}
\label{cvg-azq} 
As $t\to\infty$, the sequence $(a^t)$ converges to the vector 
$a\in \RR_{*+}^n$ given as the unique solution of Equation~\eqref{pinf}, 
the sequence $(\zeta^t)$ converges to $\zeta$, and 
the sequence $(q^t)$ of correlation vectors converges to $1_n$. 
\end{lemma}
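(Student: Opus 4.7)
The plan is to treat the three convergences in order, with the main work residing in the third.

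\emph{Step 1 (convergence of $a^t$).} Write the recursion~\eqref{p-recurs} as $a^{t+1}=F(a^t)$ with $F(b)_i=\sum_j s_{ij}\,m(\sqrt{b_j},\eta_j)$, where $m(\sigma,\eta)=\EE(\sigma\xi+\eta)_+^2=(\sigma^2+\eta^2)\Phi(\eta/\sigma)+\sigma\eta\phi(\eta/\sigma)$. Stein's lemma gives the clean identity $\partial_{b_j}m(\sqrt{b_j},\eta_j)=\Phi(\eta_j/\sqrt{b_j})\in(0,1)$, so $\partial_{b_j}F(b)_i=s_{ij}\Phi(\eta_j/\sqrt{b_j})\in[0,s_{ij}]$ and consequently $\|F(b)-F(b')\|_\infty\leq \rownorm S\rownorm\|b-b'\|_\infty$. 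Since $S=(1+\zeta)V(1+\zeta)$ with $\zeta\in[0,1]^n$, Hypothesis~\ref{hyp-V} yields $\rownorm S\rownorm\leq 4\rownorm V\rownorm<1$ uniformly in $n$. Banach's principle then produces a unique fixed point $a$ satisfying~\eqref{pinf} and gives exponential convergence $\|a^t-a\|_\infty\to 0$ uniformly in $n$.

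\emph{Step 2 (convergence of $\zeta^t$).} One needs the lower bound $a_i\geq c_S\eta_{\min}^2/2>0$, obtained by inserting $(\sqrt{a}\xi+\eta)_+^2\geq\eta^2\1[\xi\geq 0]$ in~\eqref{pinf} and using Assumption~\ref{nondeg}. Combined with $\|a^t-a\|_\infty\to 0$, the scalars $\eta_j/\sqrt{a^t_j}$ remain in a compact subset of $\RR_{*+}$, so by Lipschitz continuity of $\Phi$ on that set one gets $\|\Phi(\eta/\sqrt{a^t})-\Phi(\eta/\sqrt{a})\|_\infty\to 0$. Multiplying by $S$ (whose row sums are bounded) yields $\|\zeta^t-\zeta\|_\infty\to 0$.

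\emph{Step 3 (convergence of $q^t$, main obstacle).} Introduce the cross-covariance vector $v^t=[v^t_i]_{i\in[n]}$ with $v^t_i=\EE Z^t_iZ^{t-1}_i$, so that $q^t_i=v^t_i/\sqrt{a^t_ia^{t-1}_i}$. Writing $G(\sigma_1,\sigma_2,\rho;\eta)=\EE(\sigma_1\xi_1+\eta)_+(\sigma_2\xi_2+\eta)_+$ for correlated standard Gaussians, the state-evolution equation gives $v^{t+1}_i=\sum_j s_{ij}G(\sqrt{a^t_j},\sqrt{a^{t-1}_j},q^t_j;\eta_j)$. It suffices to show that $\|v^t-a\|_\infty\to 0$, as then $q^t_i=v^t_i/\sqrt{a^t_ia^{t-1}_i}\to 1$ using Step~1 and the lower bound on $a$. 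To analyse the $v^t$-iteration, consider the \emph{frozen} map $T_\infty(u)_i=\sum_j s_{ij}G(\sqrt{a_j},\sqrt{a_j},u_j/a_j;\eta_j)$, whose fixed point is $u=a$ (since $G(\sigma,\sigma,1;\eta)=m(\sigma,\eta)$ recovers~\eqref{pinf}). Price's identity yields $\partial_\rho G(\sigma,\sigma,\rho;\eta)=\sigma^2\PP[\sigma\xi_1+\eta\geq 0,\sigma\xi_2+\eta\geq 0]$, which is maximised at $\rho=1$ where it equals $\sigma^2\Phi(\eta/\sigma)\leq\sigma^2$. Therefore $\partial_{u_j}T_\infty(u)_i\leq s_{ij}$, which gives $\|T_\infty(u)-T_\infty(u')\|_\infty\leq\rownorm S\rownorm\|u-u'\|_\infty$, a contraction on $[0,a]^n$ with constant strictly below $1$. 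The Cauchy--Schwarz bound $v^t_i\leq\sqrt{a^t_ia^{t-1}_i}$ keeps the iterates essentially in this set after Step~1 has taken effect.

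\emph{What will be delicate.} The actual recursion uses the drifting $(a^t,a^{t-1})$ rather than $(a,a)$; the argument must therefore couple Step~1 with Step~3 in a perturbation form. Concretely, one writes
\begin{equation*}
v^{t+1}-a \;=\; \bigl(T_\infty(v^t)-a\bigr)\;+\;\bigl[T_t(v^t)-T_\infty(v^t)\bigr],
\end{equation*}
where $T_t$ denotes the exact recursion with variances $(\sqrt{a^t_j},\sqrt{a^{t-1}_j})$. Joint continuity of $G$ on the compact set furnished by Steps~1--2, together with the lower bound on $a$, bounds the bracketed perturbation by $C\|a^t-a\|_\infty+C\|a^{t-1}-a\|_\infty$, which tends to $0$ exponentially fast. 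A Grönwall-type iteration combining this with the contraction of $T_\infty$ yields $\|v^t-a\|_\infty\to 0$ uniformly in $n$, and hence $\|q^t-1_n\|_\infty\to 0$, completing the lemma.
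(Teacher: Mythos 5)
Your Steps~1 and~2 coincide with the paper's argument: both use $\partial_b\,\EE(\sqrt b\,\xi+\ueta)_+^2=\Phi(\ueta/\sqrt b)\in(0,1)$ to show that $a\mapsto S\,\EE(\sqrt a\,\xi+\eta)_+^2$ is an $\ell^\infty$-contraction with constant $\rownorm S\rownorm<1$, establish a uniform lower bound on $a$ from Assumption~\ref{nondeg}, and pass to $\zeta^t\to\zeta$ by continuity. For Step~3 you take a genuinely different route. The paper introduces the normalized scalar function $\sH(\uq,\ub)=\EE(G_1+\ub)_+(G_2+\ub)_+/\EE(G_1+\ub)_+^2$, cites \cite[Lemmas~37--38]{mon-ric-16} for the strict inequality $\sH(\uq,\ub)>\uq$ on $[0,1)$, reduces to the approximate scalar recursion $\|q^{t+1}-\sH(q^t,\eta/\sqrt a)\|_\infty\to 0$, and concludes by compactness of $[0,1]\times\cQ_\sH$. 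You instead track the unnormalized cross-covariance $v^t=\EE \bZ^t\bZ^{t-1}$ and use Price's identity to show that the frozen map $T_\infty(u)_i=\sum_j s_{ij}G(\sqrt{a_j},\sqrt{a_j},u_j/a_j;\eta_j)$ has partial derivatives $\partial_{u_j}T_\infty(u)_i=s_{ij}\,\PP[\sqrt{a_j}\xi_1+\eta_j\geq 0,\sqrt{a_j}\xi_2+\eta_j\geq 0]\leq s_{ij}$, hence contracts in $\|\cdot\|_\infty$ with the \emph{same} constant $\rownorm S\rownorm$ as in Step~1; a standard perturbed-contraction iteration $\|v^{t+1}-a\|_\infty\leq\rownorm S\rownorm\,\|v^t-a\|_\infty+C(\|a^t-a\|_\infty+\|a^{t-1}-a\|_\infty)$ then gives $v^t\to a$ and $q^t\to 1_n$ with an explicit geometric rate. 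What you gain: your proof is self-contained (no dependence on the external result from \cite{mon-ric-16}), quantitative, and it makes the cross-coordinate coupling by $S$ fully explicit --- the paper's displayed reduction to $\sH(q^t_i,\eta_i/\sqrt{a_i})$ in fact replaces the stochastic average $\sum_j\omega_{ij}\sH(q^t_j,\eta_j/\sqrt{a_j})$ (with $\omega_{ij}=a_i^{-1}s_{ij}f(a_j,\eta_j)$, $\sum_j\omega_{ij}=1$) by its diagonal term, which requires an extra argument that your contraction framework does not. Note also that $\sH(\cdot,\ub)$ alone is \emph{not} a uniform contraction ($\partial_{\uq}\sH(1,\ub)\to 1$ as $\ub\to 0$), so the contractivity in your argument genuinely comes from $\rownorm S\rownorm<1$, not from $\sH$. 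The two domain technicalities you flag --- the Cauchy--Schwarz bound $v^t_j\leq\sqrt{a^t_ja^{t-1}_j}$ allows $v^t_j/a_j$ to slightly exceed $1$ for finite $t$, and $T_t$ differs from $T_\infty$ through the drifting variances --- are indeed absorbed by the perturbation term since $G$ is $C^1$ on the relevant compact (the derivative at $\rho=1$ is finite) and $\sqrt{a^t_ja^{t-1}_j}-a_j=O(\|a^t-a\|_\infty+\|a^{t-1}-a\|_\infty)$; the Gr\"onwall-type argument you invoke is really just the geometric-series summation of a contraction with a vanishing forcing term.
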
 

\begin{proof}
Let $\uxi \sim \cN(0,1)$, and define the function $f : \RR_+ \times \cQ_\eta
\to \RR_+, (\ua,\ueta) \mapsto \EE (\sqrt{\ua} \uxi + \ueta )_+^2$. A small
calculation that we omit reveals that the derivative $\partial f(\ua,\ueta) =
\partial f(\cdot, \ueta)|_{\ua}$ on $\RR_{*+}$ is given as $\partial f(\ua,
\ueta) = \PP [\sqrt{\ua} \uxi + \ueta \geq 0 ]$. In particular,
$f(\cdot,\ueta)$ is increasing. Therefore, $f(\ua, \ueta) \geq f(0, \ueta) =
\ueta^2 \geq \eta_{\min}$. We thus have from Assumption~\ref{nondeg} and 
Equations~\eqref{p-recurs} that
\[
a^1 \succcurlyeq c_S 1 \quad \text{and} \quad 
\forall t \geq 1, \ a^{t+1}  \succcurlyeq S \eta^2 \succcurlyeq 
   c_S \eta_{\min}^2 1 . 
\]
Moreover, since $\rownorm S \rownorm < 1$, and since
$f(\ua, \cdot)$ is also increasing, we have 
\[
a^{t+1} = S f(a^t, \eta) \preccurlyeq S f(a^t, \eta_{\max} 1) 
 \prec f(\| a^t \|_\infty, \eta_{\max}) 1 , 
\] 
thus, $\| a^{t+1} \|_\infty <  f(\| a^t \|_\infty, \eta_{\max})$.  For $\ua$
large, we have that $f(\ua, \eta_{\max}) = \ua \EE(\uxi + \eta_{\max} /
\sqrt{\ua})_+^2 \sim \ua / 2$. Noting from~\eqref{p-recurs} that $\| a^{1}
\|_\infty < ( 1 + \eta_{\max} )^2$, we readily obtain that for each $t$, the
elements of the vector $a^t$ stay in a compact interval $\cQ_a \subset
\RR_{*+}$ which is independent of $n$. 

Let us study the iterations $a^{t+1} = S f(a^t, \eta)$ given 
by~\eqref{p-recurs}. The vector-valued matrix function $F(a,\eta) = 
S f(a,\eta)$ satisfies 
$| [F(a,\eta) - F(a',\eta)]_i | \leq \rownorm S \rownorm \max_i 
| f(a_i,\eta_i) - f(a'_i,\eta_i) | \leq \rownorm S \rownorm \, 
\| a - a' \|_\infty$, where $a = [a_i]$, $a' = [a'_i]$, and $\eta = [\eta_i]$.
Thus $\| F(a,\eta) - F(a',\eta) \|_\infty \leq \rownorm S \rownorm 
\| a - a' \|_\infty$, and the convergence $a^t \to_t a$, where $a$ is
defined as the unique solution of Equation~\eqref{pinf}, is obtained by
Banach's fixed point theorem. That this convergence is uniform in $n$ 
in the norm $\|\cdot\|_\infty$ results from the inequality 
$\| a^{t} - a \|_\infty \leq \rownorm S \rownorm^{t-1} 
 \| a^{1} - a \|_\infty \leq \rownorm S \rownorm^{t-1} D_{\cQ_a}$, where 
$D_{\cQ_a}$ is the diameter of the compact $\cQ_a$.

Recall that $\zeta^t = S \PP [ \sqrt{a^t} \xi + \eta \geq 0 ]$.  The
convergence $\zeta^t \to_t \zeta$ uniformly in $n$ in the norm
$\|\cdot\|_\infty$ is due to the boundedness of $(\rownorm S^{(n)} \rownorm)$,
the continuity of the function $(\ua,\ueta) \mapsto \PP [ \sqrt{\ua} \uxi +
\ueta \geq 0 ]$ on the compact $\cQ_a \times \cQ_\eta$, and to the convergence 
of $a^t$ to $a$ as above. 

We now establish the convergence $q^t \to_t 1$. 
Let $[G_1,G_2]$ be a centered Gaussian vector such that $\EE G_1^2 = \EE
G_2^2 = 1$, and such that $\EE G_1G_2 = \uq \in [0,1]$. 
Writing $\cQ_a = [ a_{\min}, a_{\max} ]$, define the compact interval 
$\cQ_\sH = [ \eta_{\min} / \sqrt{a_{\max}}, \eta_{\max} / \sqrt{a_{\min}} ]$,
and define the continuous function $\sH : [0,1] \times \cQ_\sH \to [0,1]$ as 
\[
(\uq,\ub) \mapsto \sH(\uq,\ub) = \frac{\EE (G_1 + \ub)_+ (G_2 + \ub)_+}
   {\EE (G_1 + \ub)_+^2} . 
\]
A consequence of \cite[Lemma~38 and proof of Lemma~37]{mon-ric-16} is that for
each $\ub \in \cQ_\sH$, the function $\sH(\cdot,\ub)$ 
satisfies $\sH(q,\ub) > q$ for all $q \in [0,1)$, and $\sH(1,\ub) = 1$.

We also define the continuous function 
$H : [0,1] \times \cQ_\sH \times \cQ_\sH \to [0,1]$ as 
\[
(\uq,\ub,\ud) \mapsto H(\uq,\ub,\ud) = \frac{\EE (G_1 + \ub)_+ (G_2 + \ud)_+}
   {\left(\EE (G_1 + \ub)_+^2  \EE (G_2 + \ud)_+^2\right)^{1/2}} . 
\]
Given $t > 0$ and $k = t-1, t$, write the Gaussian vectors $\bZ^k$ from 
the $(S,h,\eta,1)$--state evolution equations as $\bZ^k = \sqrt{a^k} \xi^k$,
with $\xi^k \sim \cN(0, I_n)$, and with $q^t = \EE \xi^t \xi^{t-1}$. 
By the $(S,h,\eta,1)$--state evolution equations, we have 
\begin{align*} 
\EE \bZ^{t+1} \bZ^{t} &= S \EE (\bZ^{t} + \eta)_+ (\bZ^{t-1} + \eta)_+ \\
 &= S \left(\EE (\bZ^{t} + \eta)_+^2 \EE (\bZ^{t-1} + \eta)_+^2 \right)^{1/2} 
 H(q^t, \eta /\sqrt{a^t}, \eta / \sqrt{a^{t-1}} ) \\
&= S \left( f(a^t, \eta) f(a^{t-1},\eta) \right)^{1/2}  
 H(q^t, \eta /\sqrt{a^t}, \eta / \sqrt{a^{t-1}} ), 
\end{align*}  
and using the state evolution equations again, we obtain that 
\[
q^{t+1} = \left(a^{t+1} a^{t}\right)^{-1/2} S \ 
 (f(a^t,\eta) f(a^{t-1},\eta))^{1/2} 
  H(q^t, \eta /\sqrt{a^t}, \eta / \sqrt{a^{t-1}}) . 
\]
Notice that $H(\uq, \ub, \ub) = \sH(\uq, \ub)$.  By the continuity of $H$ on
$[0,1] \times \cQ_\sH \times \cQ_\sH$ and the uniform convergence $\| a^t - a
\|_\infty \to_t 0$, we obtain that $\| H(q^t, \eta/\sqrt{a^t},
\eta/\sqrt{a^{t-1}} ) - \sH(q^t, \eta/\sqrt{a}) \|_\infty \to_t 0$ uniformly in
$n$.  We also have that $\| (a^{t+1} a^t)^{-1/2} - a^{-1} \|_\infty \to_t 0$
uniformly, and by using the continuity of $f$ on $\cQ_a \times \cQ_\eta$, that
$\| (f(a^t,\eta) f(a^{t-1},\eta))^{1/2} - f(a,\eta) \|_\infty \to_t 0$
uniformly.  Using the boundedness of $\rownorm S \rownorm$ and observing that
$a^{-1} S f(a,\eta) =  1$, we obtain that 
$\| q^{t+1} - \sH(q^t, \eta/\sqrt{a}) \|_\infty \to_t 0$ uniformly in $n$. 


Since the set of zeros of the continuous function 
$(\uq,\ub) \mapsto \uq - \sH(\uq,\ub)$ on the compact $[0,1]\times \cQ_\sH$ is 
reduced to $\{ 1 \} \times \cQ_\sH$, we deduce from the last convergence that
$\| q^{(n),t} -  1_n \|_\infty \to_t 0$ uniformly in $n$. 
\end{proof} 

In the remainder of the proof, we reuse the notation $\lessprob$ that was
introduced before the subsection~\ref{prf-th-lip}. 
Similarly, $\delta : (0, \epsilon) \to \RR_+$ defined for 
some $\epsilon > 0$ is a generic function, independent of $n$, such that 
$\delta(e) \to 0$ as $e\to 0$. This function can change from a display
to another. 

Recalling that $W = \diag(1+\zeta)^{1/2} \Sigma \diag(1+\zeta)^{1/2}$, we have 
by Hypothesis~\ref{|Sig|} that $\limsup_n \| W \| < 2$ with probability one. 
Let us set $C_W = 2$ in the statement of Theorem~\ref{th-amp-lip}. 

Consider the AMP sequence~\eqref{xt+1xt}. Fix a small number $e > 0$, and 
choose the index $t$ in this sequence to be large enough (independently of 
$n$) so that 
\begin{align}
& \EE \| \bZ^{t+1} - \bZ^t \|_n^2 + \EE \| \bZ^{t} - \bZ^{t-1} \|_n^2 \leq e,
  \nonumber \\ 
& \| \zeta^t - \zeta \|_\infty \leq e , \quad \text{and} \nonumber \\
& \| a^t - a \|_\infty \leq e, \label{at-a} 
\end{align}
which is possible by Lemma~\ref{cvg-azq}, after noting that 
$\EE \| \bZ^{t} - \bZ^{t-1} \|_n^2 = n^{-1} \sum_i a^{t}_i + a^{t-1}_i 
 - 2 (a^{t}_i a^{t-1}_i)^{1/2} q^{t}_i$. 

Repeating the derivations that follow Equation~\eqref{xt+1xt}, we reach the 
equation~\eqref{lcp-pert}, and we obtain the bounds 
\begin{align*} 
\| (1+\zeta)^{-1/2} \chi_1 \|_n^2 &\leq 
  \| (\zeta - \zeta^t) (x^{t-1} + \eta)_+ \|_n^2 \\
 &\leq \| (\zeta - \zeta^t) \|_\infty^2 
\bigl( \| (x^{t-1} + \eta)_+ \|_n^2 - \EE \| (\bZ^{t-1} + \eta)_+ \|_n^2\bigr) 
  + \| (\zeta - \zeta^t) \|_\infty^2 \EE\| (\bZ^{t-1} + \eta)_+ \|_n^2 \\
 &\lessprob C e^2 
\end{align*} 
by Theorem~\ref{th-amp-lip} and Lemma~\ref{cvg-azq}, and 
\begin{align*} 
\| (1+\zeta)^{-1/2} \chi_2 \|_n^2 &\leq 
  2 \| (x^{t} + \eta)_+ - (x^{t-1} + \eta)_+ \|_n^2 + 
  2 \| x^{t+1}- x^{t} \|_n^2  \\ 
&\leq 
  2 \| (x^{t} + \eta)_+ - (x^{t-1} + \eta)_+ \|_n^2 - 
  2 \EE \| (\bZ^{t} + \eta)_+ - (\bZ^{t-1} + \eta)_+ \|_n^2 \\
 &\phantom{=} + 2 \EE \| (\bZ^{t} + \eta)_+ - (\bZ^{t-1} + \eta)_+ \|_n^2 \\
 &\phantom{=} 
       + 2 \| x^{t+1}- x^{t} \|_n^2  - 2 \EE \| \bZ^{t+1}- \bZ^{t} \|_n^2 \\ 
 &\phantom{=} + 2 \EE \| \bZ^{t+1}- \bZ^{t} \|_n^2 \\
 &\lessprob \delta(e), 
\end{align*} 
by Theorem~\ref{th-amp-lip} and Lemma~\ref{cvg-azq} again, and by noticing that 
$\EE \| (\bZ^{t} + \eta)_+ - (\bZ^{t-1} + \eta)_+ \|_n^2 + 
 \EE \| \bZ^{t+1}- \bZ^{t} \|_n^2 \leq e$ thanks to the Lipshitz property
of the function $\ux \mapsto (\ux+\ueta)_+$. 

All in all, we have that the vector $u^t = (1+\zeta)^{1/2}(x^t + \eta)_+$
satisfies $u^t = \LCP(I-\Sigma, -r-\bs\varepsilon^t)$ on the event 
$[\| \Sigma \| < 1]$, with 
\begin{equation}
\| \bs\varepsilon^t \|_n \lessprob \delta(e). 
\label{eps-amp-petit}
\end{equation} 
We now tackle the third step of the proof, applying the LCP perturbation result
of Chen and Xiang in \cite{che-xia-07}. By 
\cite[Th.~2.7 and Th.~2.8]{che-xia-07}, we have 
\[
\| u^t - u_\star \| \leq \left\| ( I - \Sigma )^{-1} \right\| 
  \| \bs\varepsilon^t \| , 
\]
thus, $\| u^{t} - u_\star \| \leq C \| \bs\varepsilon^t \|$ with probability
one for all $n$ large by Hypothesis~\ref{|Sig|}. 

Recall the definition of the Gaussian vector $Y = (1+\zeta) (\sqrt{p} \xi + r)$
in the statement of Theorem~\ref{th-lvsym}. To finish the proof of this
theorem, it remains to prove that 
\begin{equation} 
\label{cvg-pl} 
\forall \varphi\in\PL_2(\RR), \quad
\frac{1}{n} \sum_{i\in[n]} \varphi(u_{\star,i}) - \EE\varphi((Y_i)_+) 
\toprobalong 0 . 
\end{equation} 
Notice that $Y = (1+\zeta)^{1/2}(\sqrt{a} \xi + \eta) \eqlaw
(1+\zeta)^{1/2}(\bZ + \eta)$ by Equations~\eqref{eq-eta} and~\eqref{p-a}. 
For $\varphi\in\PL_2(\RR)$, we write 
\begin{align*} 
& \frac 1n \sum_{i\in[n]} \varphi(u_{\star,i}) 
  - \EE \varphi((1+\zeta_i)^{1/2}(Z_i + \eta_i)_+) \\
&= \frac 1n \sum_{i\in[n]} 
    \left( \varphi(u_{\star,i}) - \varphi(u^t_i) \right) 
  + \frac 1n \sum_{i\in[n]} \left( \varphi(u^t_i)  
   - \EE \varphi((1+\zeta_i)^{1/2}(Z^t_i + \eta_i)_+) \right) \\
 &\phantom{=} + \frac 1n \sum_{i\in[n]}  
   \EE \varphi((1+\zeta_i)^{1/2}(Z^t_i + \eta_i)_+)  
   - \EE \varphi((1+\zeta_i)^{1/2}(Z_i + \eta_i)_+)   \\
&= \epsilon_1 + \epsilon_2 + \epsilon_3. 
\end{align*} 
We have  
\[
|\epsilon_1| \leq \frac Cn \sum_{i\in[n]} | u_{\star,i} - u^t_i |
 ( 1 + 2 |u^t_i| + | u_{\star,i} - u^t_i | ) 
 \leq C \| u_{\star} - u^t \|_n ( 1 + 2 \| u^t \|_n + 
  \| u_{\star} - u^t \|_n ) 
\]
by Cauchy-Schwarz. Thus, with probability one, 
$|\epsilon_1| \leq C \|\bs\varepsilon^t\|_n 
 ( 1 + \| u^t \|_n + \|\bs\varepsilon^t\|_n )$ for all $n$ large. 
Applying Theorem~\ref{th-amp-lip} to $\| u^t \|_n$ and using the bound 
$\|\bs\varepsilon^t\|_n \lessprob \delta(e)$, we obtain that 
$|\epsilon_1| \lessprob \delta(e)$. 

We also have that 
\begin{equation}
\label{e2}
\epsilon_2 \toprobalong 0 
\end{equation}
by Theorem~\ref{th-amp-lip}.

To deal with $\epsilon_3$, we can write $\bZ^t = \sqrt{a^t} \xi$ and 
$\bZ = \sqrt{a} \xi$, with $\xi \sim \cN(0,I_n)$, and use the pseudo-Lipschitz
property of $\varphi$ along with the bound~\eqref{at-a} to show after
a small derivation that $|\epsilon_3| \leq Ce$. 

Putting these bounds together, we obtain that 
\[
\Bigl| \frac{1}{n} \sum_{i\in[n]} \varphi(u_{\star,i}) - \EE\varphi((Y_i)_+) 
 \Bigr| \lessprob \delta(e) . 
\]
Since $e$ is arbitrary, the convergence~\eqref{cvg-pl} follows, and 
Theorem~\ref{th-lvsym} is proven.

\subsection{Proof of Lemma~\ref{sys-uniq}} 
\label{prf-sys-uniq} 
Given $\zeta \in [0,1]^n$, Equation~\eqref{var-lcp} is 
$p = V(1+\zeta)^2 f(p, r)$, where $f$ is the function introduced in the proof 
of Lemma~\ref{cvg-azq}. We have that $2 f (\underline p, \underline r) / 
 \underline p \to 1$ as $\underline p \to \infty$, uniformly in 
$\underline r \in \cQ_r$ from Hypothesis~\ref{lv-r}. Thus, there exists 
$p_{\max} > 0$ such that 
$f(\underline p, \underline r) \leq 3 \underline p / 4$ for each 
$\underline p \geq p_{\max}$ and each $\underline r \in \cQ_r$.  By 
consequence, if $\| p \|_\infty > p_{\max}$, then 
$\| V(1+\zeta)^2 f(p, r) \|_\infty \leq (3 \| p \|_\infty / 4) 
\| V(1+\zeta)^2 1_n \|_\infty < (3 \| p \|_\infty / 4)$ by 
Hypothesis~\ref{hyp-V}, thus, $V(1+\zeta)^2 f(p, r)$ cannot be equal to $p$ 
when $p \not\in [0, p_{\max}]^n$.  On the other hand, if 
$p \in [0, p_{\max}]^n$, it holds that $\| V(1+\zeta)^2 f(p, r) \|_\infty \leq 
f(p_{\max}, r_{\max}) \| V(1+\zeta)^2 1_n \|_\infty \leq 3 p_{\max} / 4$, 
thus, $V(1+\zeta)^2 f(p, r) \in [ 0, p_{\max}]^n$. 

Turning to Equation~\eqref{ons-lcp}, for each $\zeta \in [0,1]^n$, we see that
$(1+\zeta) V (1+\zeta) \PP\left[\sqrt{p} \xi + r \geq 0\right] \subset [0,1]^n$
for each $p$ by Hypothesis~\ref{hyp-V} again.  

Thus, writing the system \eqref{sys-lv} as $(p, \zeta) = G(p, \zeta)$, we
obtain that $G([0, p_{\max}]^n \times [0,1]^n) \subset [0, p_{\max}]^n \times
[0,1]^n$, and furthermore, $G$ does not have a fixed point outside 
$(\RR_+^n\setminus [0, p_{\max}]^n) \times [0,1]^n$. Since 
$[0, p_{\max}]^n \times [0,1]^n$ is a compact convex set of $\RR^{2n}$, we 
obtain by Brouwer's fixed point theorem that $G$ has a fixed point in this set.
In particular, when $(p,\zeta)$ is a fixed point, $\| p \|_\infty < p_{\max}$.  

To complete the proof, we need to show that this fixed point is unique. To
that end, we rely on the construction of the previous paragraph, where
we note that this uniqueness is never used. 

In all the remainder of this proof, the integer $n$ is fixed.  Choose a
solution $(p,\zeta)$ of the system~\eqref{sys-lv}. From this solution,
construct the matrix $S^{(n)}$ and the vector $\eta^{(n)}$ according
to~\eqref{eq-S} and~\eqref{eq-eta} respectively.  Let $(M)$ be a sequence of
integers converging to infinity. For each $M$, construct the symmetric matrix
$\bS^{(M)} \in \RR_+^{nM \times nM}$ and the vector 
$\bs\eta^{(M)} \in \RR_{*+}^{nM}$ as 
\[
\bS^{(nM)} = S^{(n)} \otimes \left(M^{-1}  1_M  1_M^\T\right), 
 \quad \text{and} \quad 
\bs\eta^{(nM)} = \eta^{(n)} \otimes  1_M ,  
\]
where $\otimes$ is the Kronecker product. 
Let $\bX^{(nM)} = [ \bX^{(nM)}_{ij} ]_{1\leq i,j \leq nM}$ be a real random 
symmetric $nM \times nM$ matrix such that the random variables 
$\{ \bX^{(nM)}_{ij} \}_{1\leq i < j\leq nM}$ are independent $\cN(0,1)$ random 
variables, and such that $\bX^{(nM)}_{ii} = 0$ for $i\in[nM]$. Define the 
random matrix $\bW^{(nM)}$ as 
\[
\bW^{(nM)} = (\bS^{(nM)})^{\odot {1/2}} \odot \bX^{(nM)} . 
\]
We shall consider herein the $\RR^{nM}$--valued AMP iterates based on the
matrix $\bW^{(nM)}$ and on the $(\bS^{(nM)},h,\bs\eta^{(nM)},1_{nM})$--state 
evolution equations, which take the form 
\[
\bu^{(nM),t+1} = \bW^{(nM)} \left( \bu^{(nM),t} + \bs\eta^{(nM)} \right)_+ 
 - \bs\zeta^{(nM),t} \left( \bu^{(nM),t-1} + \bs\eta^{(nM)} \right)_+ 
\]
(expression of $\bs\zeta^{(nM),t}$ omitted). 
In this context, one can check that Assumptions~\ref{ass-X}--\ref{ass:h-lip}
and~\ref{nondeg} applied to this model are satisfied with $n$ and $K_n$ there 
replaced with $nM$ and $K_n M$ respectively. Write 
\[
\bV^{(nM)} = V^{(n)} \otimes (M^{-1} 1_M 1_M^\T) 
 \quad \text{and} \quad 
\br^{(nM)} = r^{(n)} \otimes 1_M 
\]
(so that $\bS = \diag(1_{nM} + \bs\zeta) \bV \diag(1_{nM} + \bs\zeta)$ with 
$\bs\zeta = \zeta \otimes 1_M$ and $\bs\eta=(1_{nM} + \bs\zeta)^{1/2} \br$), 
and let 
\[
\bSigma^{(nM)} = (\bV^{(nM)})^{\odot {1/2}} \odot \bX^{(nM)} . 
\]
Observe that since $n$ is now fixed, our matrices $\bV^{(nM)}$ are no more 
sparse, and $\| \bV^{(nM)} \|_\infty \lesssim 1/M$. Thus, recalling 
the spectral norm controls made above in the Gaussian case, the positive
number 
\[
\bT_{\text{Gauss}}^{(nM)} = (1+\varepsilon) 
  \left( 2 \rownorm \bV^{(nM)} \rownorm^{1/2} 
  + \frac{6}{\sqrt{\log(1+\varepsilon)}} 
  (\| \bV^{(nM)} \|_\infty \log (nM))^{1/2}
 \right) 
\]
satisfies $\limsup_M \bT_{\text{Gauss}}^{(nM)} < 1$ by choosing $\varepsilon$
small enough. Thus, by the Gaussian concentration such as in~\eqref{gauss-con},
we obtain that $\limsup_M \| \bSigma^{(nM)} \| < 1$ with the probability one. 
By consequence, on this event, the equation 
$\bu_\star^{(nM)} = \LCP(I - \bSigma^{(nM)}, - \br^{(nM)})$ is well-defined
for all $M$ large. It is important to note that this vector does not depend
on the chosen solution $(p,\zeta)$ of the system~\eqref{sys-lv}. 

Write $\bu_\star^{(nM)} = \bigl[ \bu_{\star}^{(nM)}(1)^\T, \ldots, 
 \bu_{\star}^{(nM)}(n)^\T \bigr]^\T$ where 
$\bu_{\star}^{(nM)}(i) = \bigl[ \bu_{\star,1}^{(nM)}(i), \ldots, 
\bu_{\star,M}^{(nM)}(i) \bigr]^\T$. 
Recall the definition of the $\RR^n$--valued Gaussian vector $Y^{(n)}$ as
provided in the statement of Theorem~\ref{th-lvsym}. 
By repeating the argument of the previous paragraph, by relying this time on 
the AMP sequence $(\bs u^{(nM),t})_t$, we are able to show that 
\[
\forall \varphi\in\PL_2(\RR), \quad
\frac{1}{nM} \sum_{i\in[n]} \sum_{j\in[M]}  \varphi(\bu_{\star,j}^{(nM)}(i)) 
\xrightarrow[M\to\infty]{\mathcal P} 
\frac 1n \sum_{i\in[n]} \EE\varphi((Y_i)_+) .
\]
However, we need here a bit more than this convergence, which requires a slight 
modification of the approach of the previous paragraph. By relying on our new
AMP construction, we have that for each $e > 0$, there exists an integer
$t > 0$ and a random $\RR^{nM}$--valued error vector $\bs\varepsilon^{(nM),t}$ 
such that 
\[
\bigl\| \bu_\star^{(nM)} - \bu^{(nM),t} \bigr\| \leq 
 \bigl\| \bigl(I - \bSigma^{(nM)} \bigr)^{-1} \bigr\| \  
\bigl\| \bs\varepsilon^{(nM),t} \bigr\| 
\]
with 
\[
\PP\bigl[ \bigl\| \bs\varepsilon^{(nM),t} \bigr\|_{nM}^2 \geq e \bigr]  
\xrightarrow[M\to\infty]{} 0 
\]
Writing $\bu^{(nM),t} = \bigl[ \bu^{(nM),t}(1)^\T,\ldots, 
 \bu^{(nM),t}(n)^\T \bigr]^\T$ with 
$\bu^{(nM),t}(i) = \bigl[ \bu_1^{(nM),t}(i), \ldots, \bu_{M}^{(nM),t}(i) 
  \bigr]^\T$ and remembering that $n$ is fixed, this implies that there is 
$C > 0$ such that
\[
\forall i \in [n], \ 
\PP\bigl[ \bigl\| \bu_{\star}^{(nM)}(i) - \bu^{(nM),t}(i)  \bigr\|_M^2 
 \geq C n e \bigr]  \xrightarrow[M\to\infty]{} 0 . 
\]
This is the analogue of the convergence \eqref{eps-amp-petit}. 

Furthermore, let $\varphi\in\PL_2(\RR)$, let $i \in [n]$, and define the 
$nM$--uple 
\[
(\beta^{(nM)}_1, \ldots \beta^{(nM)}_{nM}) = 
( 0, \ldots, 0, \! \begin{array}[t]{c} \underbrace{1,1,\ldots 1,} \\ 
 \text{length } M \end{array}\! 0, \ldots, 0),
\]  
where the first element of the $M$--uple $(1,\ldots,1)$ is at the 
$((i-1)M+1)^{\text{th}}$ place. By applying Theorem~\ref{th-amp-lip} with these 
weights, we obtain 
\[
\forall i \in [n], \quad 
\frac 1M \sum_{l=1}^M \varphi(\bu^{(nM),t}_l(i)) 
 - \EE \varphi( (1+\zeta_i)^{1/2} (\sqrt{a_i^t} \uxi + \eta_i)_+) 
 \xrightarrow[M\to\infty]{{\mathcal P}} 0 ,  
\]
where $\uxi \sim \cN(0,1)$, and where the vector $a^t = [ a^t_i ]_{i=1}^n$ is 
precisely the one given by the recursion~\eqref{p-recurs}. This is the 
analogue of the convergence~\eqref{e2}. 

Completing the argument of the previous paragraph with these new convergences,
we obtain that 
\[
\forall i \in [n], \ 
\mu^{\bu_{\star}^{(nM)}(i)} 
\xrightarrow[M\to\infty]{\mathcal P} 
  \mcL\left((1+\zeta_i) (\sqrt{p_i} \uxi + r_i)_+\right) \quad 
 \text{in } \cP_2(\RR) . 
\]
From the uniqueness of these limits, we deduce that the solution 
$(p = [p_i]_{i=1}^n,\zeta= [\zeta_i]_{i=1}^n)$ of the system~\eqref{sys-lv} is 
unique. 

\subsection*{Acknowledgements}
I would like to thank C\'edric Gerbelot, Marc Lelarge, and the 
authors of \cite{akj-etal-(arxiv)22} who were all members of the French 
CNRS 80 prime project KARATE, for the inspiring and stimulating discussions. 

\appendix

\section{Proof of Proposition~\ref{Wfini}} 
\label{anx-Wfini} 
Assume that $\rho > 0$. Following the notations of~\cite{ban-vha-16}, write 
$\sigma = \max_{i\in[n]} (\sum_{j\in[n]} s_{ij})^{1/2}$ and 
$\sigma_* = \max_{ij} \sqrt{s_{ij}}$. 
Then, the proof of \cite[Cor.~3.5]{ban-vha-16} shows that 
\[
\Bigl( \EE \| W \|^{2\lceil \log n\rceil} \Bigr)^{1/(2\lceil\log n\rceil)} 
\leq C \left( \sigma + \sigma_* (\log n)^{(\rho\vee 1)/2} \right) 
\leq C \left( 1 + \sqrt{\frac{(\log n)^{\rho\vee 1}}{K_n}} \right), 
\]
where the second inequality is due to our Assumption~\ref{ass-A}. Using 
Markov's inequality and the hypothesis $K_n \gtrsim (\log n)^{\rho \vee 1}$, 
we obtain that
\[
\PP[ \| W \| \geq \eta ]^{1/(2\lceil\log n\rceil)} \leq C / \eta 
\]
for any $\eta > 0$. Choosing $\eta$ large enough, the result follows from
the Borel-Cantelli lemma. 

If $\rho = 0$, we can just apply the concentration results provided by
\cite[Cor~3.12 and Rem.~3.13]{ban-vha-16}.


\newcommand{\etalchar}[1]{$^{#1}$}
\def\cprime{$'$} \def\cdprime{$''$} \def\cprime{$'$} \def\cprime{$'$}
  \def\cprime{$'$} \def\cprime{$'$}

\end{document}